\documentclass[twoside]{article}

\usepackage[margin=1in]{geometry}
\newcommand{\rene}[1]{  \ifthenelse{\boolean{showcomments}}
{\todo[inline,color=cyan]{Ren\'e: #1}}{}}

\usepackage{url}            
\usepackage{booktabs}       
\usepackage{amsfonts, bm, amssymb}       
\usepackage{nicefrac}       
\usepackage{microtype}      
\usepackage{tikz} 
\usepackage{float}
\usepackage{pifont}

\usepackage{array}
\usepackage{diagbox}
\usepackage{mathtools}
\usepackage{multirow, makecell}
\usepackage{rotating}
\usepackage{graphicx}
\usepackage{setspace}
\usepackage{algorithm, algorithmic}
\usepackage{mathdots}
\usepackage{caption}
\usepackage{subcaption}
\usepackage{float}
\usepackage{enumerate}
\usepackage{enumitem} 
\usepackage{lipsum}
\usepackage{amsmath, amsthm}
\usepackage{mdframed}
\usepackage{thmtools}

\usepackage[bottom]{footmisc}
\usepackage[colorlinks,linkcolor={blue!75!black},urlcolor=red,citecolor={blue!75!black}]{hyperref}
\usepackage{footnotebackref}

\usepackage{threeparttable}
\usepackage{color, colortbl}
\usepackage{arydshln}

\usepackage[sort]{natbib}
\allowdisplaybreaks[4]
\hypersetup{
    colorlinks=true,
    linkcolor=blue,      
    urlcolor=blue
}

\usepackage[colorinlistoftodos,bordercolor=orange,backgroundcolor=orange!20,linecolor=orange,textsize=scriptsize]{todonotes}

\definecolor{OrangeRed}{HTML}{ED135A}
\definecolor{greenContr}{HTML}{57e338}
\newcommand{\xmark}{\ding{55}}
\usepackage{enumitem}

\definecolor{shadecolor}{gray}{0.9}
\declaretheoremstyle[
headfont=\normalfont\bfseries,
notefont=\mdseries, notebraces={(}{)},
bodyfont=\normalfont,
postheadspace=0.5em,
spaceabove=1pt,
mdframed={
  skipabove=8pt,
  skipbelow=8pt,
  hidealllines=true,
  backgroundcolor={shadecolor},
  innerleftmargin=4pt,
  innerrightmargin=4pt}
]{shaded}

\newcommand{\R}{\mathbb{R}}
\newcommand{\Exp}[1]{{\mathbb{E} #1 }}
\declaretheorem[style=shaded,within=section]{definition}
\declaretheorem[style=shaded,sibling=definition]{theorem}
\declaretheorem[style=shaded,sibling=definition]{proposition}

\declaretheorem[style=shaded,sibling=definition]{corollary}

\declaretheorem[style=shaded,sibling=definition]{lemma}

\usepackage{soul}
\def\rene#1{{\color{blue}#1}}

\newcommand\norm[1]{\left\lVert#1\right\rVert}

\newcommand{\Expe}[1]{{\mathbb{E}\left[#1\right] }}  
\newcommand{\Expep}[1]{{\mathbb{E}\left[#1 \Big| \mathcal{F}^k\right]}}
\newcommand{\ceil}[1]{\left\lceil {#1} \right\rceil}
\newcolumntype{?}[1]{!{\vrule width #1}}

\usepackage[accepted]{aistats2024}
\runningtitle{Stochastic Extragradient with Random Reshuffling: Improved Convergence for Variational Inequalities}

\bibliographystyle{plainnat} 
\usepackage{graphicx}

\begin{document}

\twocolumn[

\aistatstitle{Stochastic Extragradient with Random Reshuffling:\\ Improved Convergence for Variational Inequalities}

\aistatsauthor{Konstantinos Emmanouilidis \And Ren\'e Vidal \And  Nicolas Loizou}

\aistatsaddress{
\begin{tabular}{c}
    CS \& MINDS \\Johns Hopkins University
\end{tabular} \And \begin{tabular}{c}
    ESE, Radiology \& IDEAS\\
    University of Pennsylvania
\end{tabular}\And \begin{tabular}{c}
    AMS \& MINDS \\Johns Hopkins University
\end{tabular} }
] 

\begin{abstract}
  The Stochastic Extragradient (SEG)
method is one of the most popular algorithms for solving finite-sum min-max
optimization and variational inequality 
problems (VIPs) appearing in various
machine learning tasks. However, existing
convergence analyses of SEG focus on its
with-replacement variants, while practical
implementations of the method randomly
reshuffle components and sequentially
use them. Unlike the well-studied with-replacement variants, SEG with Random
Reshuffling (SEG-RR) lacks established
theoretical guarantees. In this work, we
provide a convergence analysis of SEG-RR
for three classes of VIPs: (i) strongly
monotone, (ii) affine, and (iii) monotone. We derive conditions under
which SEG-RR achieves a faster convergence rate than
the uniform with-replacement sampling
SEG. In the monotone setting, our analysis of SEG-RR guarantees convergence to an arbitrary accuracy without large batch sizes, a strong requirement needed in the classical with-replacement SEG. As a byproduct of our results, we provide convergence guarantees for
Shuffle Once SEG (shuffles the data only
at the beginning of the algorithm) and the
Incremental Extragradient (does not shuffle the
data). We supplement our analysis with experiments validating empirically the superior performance of SEG-RR over the classical with-replacement sampling SEG.
\end{abstract}

\section{Introduction}
Minimax optimization and, more generally, variational inequality problems (VIPs) have received much attention in recent years, especially in the machine learning community. Several machine learning tasks, including Generative Adversarial Networks (GANs) \citep{goodfellow2014generativeadversarialnetworks, pmlr-v70-arjovsky17a}, adversarial training of neural networks \citep{madry2018towards, wang2021adversarial}, reinforcement learning \citep{brown2020combining, DBLP:conf/iclr/SokotaDKLLMBK23}, and distributionally robust learning \citep{namkoong2016stochastic,yu2022fast} are formulated as finite-sum min-max optimization problems,
\vspace{0mm}
\begin{equation}
\label{eq: MinMax}
\min_{x \in\R^{d_1}} \max_{y \in\R^{d_2}} f(x,y)=\frac{1}{n} \sum_{i=1}^n f_i(x,y) \, ,
\end{equation}
with the goal of finding a solution $z^*=(x^*, y^*)^\top$ such that $f(x^*, y) \leq f(x^*, y^*) \leq f(x, y^*), \forall x \in \R^{d_1}, y \in \R^{d_2}$.\!

In this work, we focus on a more abstract formulation of problem \eqref{eq: MinMax}, and we analyze algorithms for solving the following unconstrained finite-sum variational inequality problem (VIP):
find $ z^* \in \mathbb{R}^d$ such that
\begin{equation}\label{VI-problem}
 F(z^*) = \frac{1}{n} \sum_{i = 1}^n F_i(z^*) = 0,
\end{equation}
where $F_i : \mathbb{R}^d \to \mathbb{R}^d, \forall i \in [n], d = d_1 + d_2$. We denote with $\mathcal{Z}_* \subset \mathbb{R}^d$ the solution set of \eqref{VI-problem}. 

Problem \eqref{VI-problem} is quite general and covers a wide range of possible
problem formulations. For example, when the operator $F(x)$ is the gradient of a convex function $f(x)$, then problem \eqref{VI-problem} is equivalent to the minimization of the function $f(x)$. In addition, if the min-max optimization problem~\eqref{eq: MinMax}
has convex-concave continuously differentiable $f$, then using the first-order optimality conditions it can be cast as a special case of \eqref{VI-problem} with $z= (x^\top, y^\top)^\top \in \R^{d}$ and $F(z) = (\nabla_{x}f(x,y)^\top, -\nabla_{y}f(x,y)^\top)^\top$.

In the typical large-scale regime of machine learning applications ($n$ in problem \eqref{VI-problem} is large), stochastic iterative algorithms are preferred mainly because of their cheap per-iteration cost. In that setting,
we only \begin{figure}[t]
\centering 
    \includegraphics[width=\linewidth]{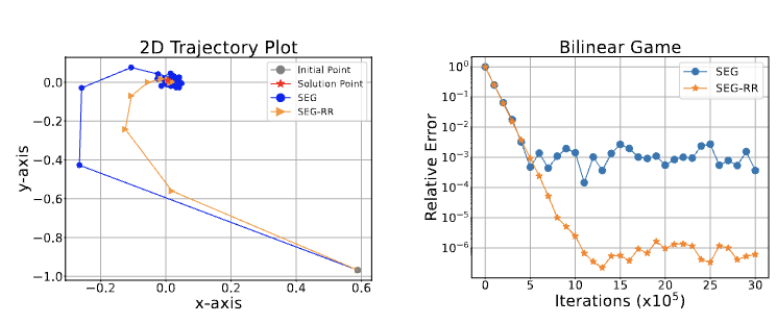}
  \caption{Bilinear Game. Left plot: 2D trajectory plot. Right plot: Relative error $\frac{\|z^k-z^*\|^2}{\|z^0-z^*\|^2}$ as a function of the number of iterations.}
  \label{Fig: 2d-plot}
  \vspace{-0.1cm}
\end{figure}
assume to have access to a stochastic estimate of the operator $F$. Several papers have been devoted to the understanding and convergence analysis of stochastic variants of popular algorithms like the gradient method~\citep{chen1997convergence}, extragradient method~\citep{korpelevich1976extragradient, gorbunov2022extragradient}, and optimistic method~\citep{popov1980modification,gorbunov2022last}. Some recent works in the area include \cite{loizou2021stochastic,beznosikov2022stochastic} on the analysis of stochastic gradient descent ascent (SGDA), \cite{gorbunov2022stochastic, hsieh2020explore, mishchenko2020revisiting} for SEG and \cite{hsieh2019convergence,choudhury2023single} for stochastic past extragradient methods. 

Most existing analyses of stochastic algorithms for solving \eqref{VI-problem} focus on algorithms that use with-replacement sampling in their update rule. Specifically, a component $F_i$ (or a minibatch) of the finite-sum structure of \eqref{VI-problem} is selected uniformly\footnote{Different with-replacement samplings can be used. Here, we use uniform distribution for ease of exposition.} at random in each step. However, most practical implementations of algorithms for solving finite-sum min-max problems and VIPs use without-replacement sampling, creating a gap between practical and theoretically understood approaches. 

In the well-studied problem of solving finite-sum minimization problems (i.e., $\min_x \frac{1}{n} \sum  f_i(x)$), practitioners prefer running popular algorithms that use without-replacement sampling. This is due to the remarkable ease of use and the better empirical performance compared to with-replacement variants~\citep{Bottou2012StochasticGD}. Unfortunately, the fact that the selected samples in an epoch of a without-replacement sampling algorithm are not independent of each other makes the analysis of the method more challenging. However, in the last few years, several works in the optimization literature were able to prove a faster convergence rate of SGD without-replacement under different scenarios~\citep{mishchenko2020random,ahn2020sgd, safran2020good, gurbuzbalaban2021random,nguyen2021unified, cai2023empirical}.

Despite the extensive use of without-replacement sampling, perhaps surprisingly, not many works focus on providing convergence guarantees for without-replacement sampling algorithms for solving min-max optimization problems and VIPs. 
\citet{das2022sampling} provide convergence guarantees for SGDA and the proximal point method (PPM) with without-replacement sampling for solving smooth and strongly convex-strongly concave problems satisfying a two-sided Polyak-Łojasiewicz inequality and show faster convergence, while  \citet{cho2023sgda} provide theoretical guarantees for SGDA with shuffling for solving structured non-monotone minimax problems. However, it is well known that SGDA fails to converge in simple monotone min-max problems (e.g., bilinear), while the PPM serves only as an implicit method.

The Stochastic Extragradient (SEG) method is one of the most popular algorithms for tackling finite-sum VIPs.
The algorithm consists of two steps: a) an \emph{extrapolation step} that computes a gradient update at the current iterate, and b) an \emph{update step} that updates the current iterate using the value of the vector field at the extrapolation point. SEG comes in different forms~\citep{gorbunov2022stochastic}. One of the most common choices is same-sample SEG (S-SEG) given in the following update rule:
\begin{equation}
	z^{k+1} = z^k - \gamma_{1} F_i\left(z^k - \gamma_{2} F_i(z^k)\right), \tag{S-SEG} \label{eq:S_SEG}
\end{equation}
where in each iteration, the same component $i \in [n]$ is sampled uniformly at random and used for the extrapolation (computation of $z^k - \gamma_{2} F_i(z^k)$) and update (computation of $z^{k+1}$) steps.  
\begin{figure*}[t]
\label{fig pseudocodes}
\centering
\begin{minipage}{0.32\textwidth}
\begin{algorithm}[H]
\setstretch{0.89}
\caption{SEG-RR}
\label{algo SEG}
\begin{algorithmic}[1]
\small
\STATE {\bf Given:} $z_0$, \text{step sizes }$\gamma_1, \gamma_2$,\\
        number of epochs $K$
\STATE {\bf Initialize:} $z_0^{0} = z_0$
\FORALL{$k = 0, ..., K-1$}
    \STATE \textcolor{orange}{Sample uniformly at random}\\
    \textcolor{orange}{a permutation $\pi^k$ of $[n]$}
    \FORALL{$i = 0, ..., n-1$}
        \STATE $\Bar{z}_{i}^k = z_{i}^k - \gamma_2 F_{\pi^k_i}(z^k_i)$
        \STATE $z_{i+1}^k = z_{i}^k - \gamma_1 F_{\pi^k_i}(\Bar{z}^k_i)$
    \ENDFOR
    \STATE $z_0^{k+1} = z^k_{n}$
\ENDFOR
\end{algorithmic}
\end{algorithm}
\end{minipage}
\hfill
\begin{minipage}{0.32\textwidth}
\begin{algorithm}[H]
\caption{SEG-SO}
\small
\label{algo EGSO}
\begin{algorithmic}[1]
\STATE {\bf Given:} $z_0$, \text{step sizes} $\gamma_1, \gamma_2$,\\
        number of epochs $K$
\STATE {\bf Initialize:} $z_0^{0} = z_0$
    \STATE \textcolor{blue}{Sample uniformly at random}\\
    \textcolor{blue}{a permutation $\pi$ of $[n]$}
\FORALL{$k = 0, ..., K-1$}
    \FORALL{$i = 0, ..., n-1$}
        \STATE $\Bar{z}_{i}^k = z_{i}^k - \gamma_2 F_{\pi_i}(z^k_i)$
        \STATE $z_{i+1}^k = z_{i}^k - \gamma_1 F_{\pi_i}(\Bar{z}^k_i)$
    \ENDFOR
    \STATE $z_0^{k+1} = z^k_{n}$
\ENDFOR
\end{algorithmic}
\end{algorithm}
\end{minipage}
\hfill
\begin{minipage}{0.32\textwidth}
\begin{algorithm}[H]
\caption{IEG}
\small
\label{algo EG IG}
\begin{algorithmic}[1]
\STATE {\bf Given:} $z_0$, \text{step sizes }$\gamma_1, \gamma_2$,\\
        number of epochs $K$
\STATE {\bf Initialize:} $z_0^{0} = z_0$
\STATE \textcolor{OrangeRed}{$\pi = [n]$ } \textcolor{OrangeRed}{// maintain the order of the dataset}
\FORALL{$k = 0, ..., K-1$}
    \FORALL{$i = 0, ..., n-1$}
        \STATE $\Bar{z}_{i}^k = z_{i}^k - \gamma_2 F_{\pi_i}(z^k_i)$
        \STATE $z_{i+1}^k = z_{i}^k - \gamma_1 F_{\pi_i}(\Bar{z}^k_i)$
    \ENDFOR
    \STATE $z_0^{k+1} = z^k_{n}$
\ENDFOR
\end{algorithmic}
\end{algorithm}
\end{minipage}
\caption{Three variants of without-replacement sampling SEG (SEG-RR, SEG-SO, IEG)}
\end{figure*}
Existing works focusing on the convergence guarantees of SEG studied only with-replacement sampling strategies, similar to \ref{eq:S_SEG}. However, most practical implementations of SEG use without-replacement sampling. 

A popular in practice but theoretically elusive update rule, belonging to the class of without-replacement sampling SEG, is \textit{SEG with Random Reshuffling} given in \ref{eq:SEG-RR} (see also Algorithm~\ref{algo SEG}). This is the method we pay most attention to in this work, as reflected in the title. In each epoch $k$, \ref{eq:SEG-RR} samples uniformly at random a permutation $\pi^k = \{\pi_0^k,\pi_1^k, \dots, \pi_{n-1}^k\}$ of $[n] \coloneqq \{1, 2, \dots, n\}$, and proceeds with $n$ iterates of the form:
\begin{equation}
   z^{k}_{i+1} = z^k_i - \gamma_1 F_{\pi^k_i} \left(z^k_i - \gamma_2 F_{\pi^k_i}(z^k_i) \right), \tag{SEG-RR} 
   \label{eq:SEG-RR}
\end{equation} 
where $\gamma_1>0$ and $\gamma_2>0$ are the step sizes in the update and extrapolation steps of the method, respectively. We then set $z_0^{k+1} = z^k_{n}$ and repeat the process for a total of $K$ epochs. In \ref{eq:SEG-RR} (Alg.~\ref{algo SEG}), a new permutation/shuffling is generated at the beginning of each epoch, which justifies the algorithm's name.

As a proof of concept, in Figure~\ref{Fig: 2d-plot}, we compare the above two variants of SEG: \ref{eq:S_SEG} (with-replacement) and \ref{eq:SEG-RR} (without-replacement) on solving a simple two-dimension bilinear problem of the form~\eqref{eq: MinMax}, where we choose $x$ and $y$ to be scalars.
As we can see in Fig.~\ref{Fig: 2d-plot}, \ref{eq:SEG-RR} converges to a smaller neighborhood of the min-max solution. Interestingly and perhaps surprisingly, in the left plot of Fig.~\ref{Fig: 2d-plot}, where we look at the trajectory of the two methods, the variant with random reshuffling \eqref{eq:SEG-RR} reduces the rotation around the solution, which might explain its preference in practical implementations over the uniform sampling variant \eqref{eq:S_SEG}. This motivates us to study further the convergence guarantees of \ref{eq:SEG-RR} in different classes of problems. 
Our work aims to bridge the gap between the theoretical analysis and practical implementation of SEG by studying the following question:
\begin{quote}
\centering
\textit{Can Random Reshuffling lead to improved theoretical and practical convergence for SEG in finite-sum VIPs?}
\end{quote}

\subsection{Preliminaries}
In this work, we assume that the operators $F_i$ of problem \eqref{VI-problem} are $L_i$-Lipschitz. This implies that the operator $F$ is also Lipschitz, and we will indicate with $L$ its value. Throughout this work, we focus on three classes of operators $F$: (i) strongly monotone, (ii) affine, and (iii) monotone.
Let us provide below the main definitions. 

\begin{definition}[$L-$Lipschitz]
An operator $F: \R^d \rightarrow \R^d$ is $L-$Lipschitz if there is $L>0$:
\begin{eqnarray}
\|F(z_1)-F(z_2)\| \leq L \|z_1-z_2\|, \quad \forall z_1, z_2 \in \R^d \label{DefLipschitz}
\end{eqnarray}
\end{definition}
We denote with $L_{max} = \max_{i \in [n]} L_i$, the maximum Lipschitz constant of the $F_i$ operators in problem~\eqref{VI-problem}

\begin{definition}[Strongly monotone / monotone operator]
\label{DefSM}
We say that an operator $F$ is $\mu-$strongly monotone if there exist $\mu>0$ such that $\forall z_1,z_2 \in \R^d$, 
$\left\langle F(z_1)-F(z_2),  z_1-z_2\right\rangle \geq \mu \|z_2-z_2\|^2$. If $\mu = 0$, then $\forall z_1,z_2 \in \R^d, \left\langle F(z_1)-F(z_2),  z_1-z_2\right\rangle \geq 0$, and we say that the operator is monotone.
\end{definition}

Lastly, we also focus on the class of affine operators, a subclass of monotone VIPs, that can be seen as a generalization of bilinear min-max problems. 

\begin{definition}[Affine]
    An operator $F: \mathbb{R}^d \xrightarrow[]{} \mathbb{R}^d$ is affine if it there exist $Q \in \mathbb{R}^{d \times d}, b \in \mathbb{R}^d$ such that $F(z) = Q z + b$.
\end{definition}

We denote with $\lambda_{min}^{+}(Q)$ the minimum non-zero eigenvalue of an affine and monotone operator $F$ in problem~\eqref{VI-problem}.

\renewcommand{\arraystretch}{1.5} 

\begin{table*}[ht]
\centering 
\resizebox{\textwidth}{!}{
\begin{tabular}{|>{}c|c|c|c|c|}
\hline
\textbf{Algorithm} & \textbf{Citation} & \textbf{Strongly Monotone} & \textbf{Affine \& Monotone} & \textbf{Monotone} \\ \hline
SGDA & \cite{loizou2021stochastic} & $\widetilde{\mathcal{O}}\left(\frac{1}{nK}\right)$ & \xmark & \xmark \\ \hline
SGDA-RR & \cite{das2022sampling} & $\widetilde{\mathcal{O}}\left(\frac{1}{nK^2}\right)$ & \xmark & \xmark \\ \hline
S-SEG & \cite{gorbunov2022stochastic, hsieh2020explore} & $\widetilde{\mathcal{O}}\left(\frac{1}{nK}\right)$ & $\widetilde{\mathcal{O}}\left(\frac{1}{nK}\right)$ & \xmark\textcolor{blue}{$^{(*)}$} \\ \hline
\rowcolor{green!40}
SEG-RR & \textbf{This paper} & $\widetilde{\mathcal{O}}\left(\frac{1}{nK^2}\right)$ & $\widetilde{\mathcal{O}}\left(\frac{1}{nK^2}\right)$ & $\mathcal{O}\left(\frac{1}{\sqrt{nK}}\right)$ \\ \hline
\end{tabular}
}
\caption{Iteration Complexity of SEG with uniform with-replacement sampling (\ref{eq:S_SEG}) and \ref{eq:SEG-RR} after a certain number of epochs $K$ that depends on the problem parameters. The $\Tilde{\mathcal{O}}(\cdot)$ notation suppresses constant and logarithmic factors.  $\textcolor{blue}{(*)}$: In the monotone case, \ref{eq:S_SEG} with constant step sizes requires large batch sizes to achieve a given target accuracy $\epsilon > 0$.}
\label{table:complexity}
\end{table*}

\paragraph{On bounded variance.} 
In the convergence analysis of \ref{eq:SEG-RR}, we do not assume bounded variance of the stochastic oracles $F_i$, i.e. there exists $c>0$ such that $\Exp[\|\nabla F_i(z)-\nabla F(z)\|^2] \leq c, \forall z \in \mathbb{R}^d$, or growth conditions, i.e. there exist $c_1, c_2 > 0: \Exp [\| \nabla F_i (z)\|^2] \leq c_1 \|\nabla F(z)\|^2 + c_2$, $\forall z \in \mathbb{R}^d$. These conditions are typically assumed in the theoretical analysis of stochastic methods for solving finite-sum VIPs of the form~\eqref{VI-problem}, as they simplify the proofs~\citep{mishchenko2020revisiting, lin2020finite, lin2020gradient, juditsky2011solving}. However, these assumptions are true only for a restrictive set of problems, and for large common classes of problems (e.g., unconstrained strongly monotone VIPs), they might not be even satisfied~\citep{loizou2021stochastic}. 
Instead, we follow a recent line of work that uses the Lipschitz assumption to provide closed-form expressions for the upper bound on the variance \citep{loizou2021stochastic,gorbunov2022stochastic,choudhury2023single}. 
More specifically, in Appendix~\ref{app preparatory_lemmas} we prove that if each $F_i$ is $L_i$-Lipschitz, then the following bound on the variance holds:
$\frac{1}{n}\sum_{i=0}^{n-1} \norm{ F_{i}(z) - F(z)}^2 \leq A \|z - z^*\|^2 + 2 \sigma_*^2$ 
where $A=\frac{2}{n} \sum\limits_{i=0}^{n-1} L_i^2 $ and $\sigma_*^2 =\frac{1}{n} \sum\limits_{i=0}^{n-1}\left\|F_i\left(z^*\right)\right\|^2$.
This new upper bound allows one to avoid the necessity of introducing any extra assumptions on the variance of the stochastic operators in the proofs. 

\subsection{Main Contributions}
Our main contributions are summarized below. See also Table~\ref{table:complexity} for a comparison of iteration complexities of our results with closely related works.
\begin{enumerate}
    \item \textbf{Strongly monotone or affine VIPs.}
 We prove the first convergence guarantees for \ref{eq:SEG-RR} for solving strongly monotone and affine VIPs. We show a linear convergence to a neighborhood of $z^*$ when constant step sizes $\gamma_1$ and $\gamma_2$ are used, and we explain why a double stepsize selection is needed. In particular, in our theorems, we require the extrapolation stepsize to be larger than the update stepsize ($\gamma_2 > \gamma_1$), which aligns with recent results on the convergence of \ref{eq:S_SEG} \citep{gorbunov2022stochastic, hsieh2020explore}. In both strongly monotone and affine regimes, we prove improved convergence of random reshuffling over uniform with-replacement sampling by showing that after a certain number of epochs $K$, \ref{eq:SEG-RR} achieves an iteration complexity of $\Tilde{\mathcal{O}}\left(\frac{1}{nK^2}\right)$ outperforming the $\Tilde{\mathcal{O}}\left(\frac{1}{nK}\right)$ iteration complexity of \ref{eq:S_SEG}. In the strongly monotone regime, this coincides with the benefit that SGDA-RR has over SGDA with uniform sampling proved in prior works. However, SGDA and SGDA-RR fail to converge to simple problems captured under the affine setting (e.g., bilinear minimax problems). 

\item \textbf{Monotone VIPs: Convergence without large batch sizes.} In the monotone case, we prove a sublinear convergence of the weighted average norm $\sum_{k=0}^{K-1} w_k\Expe{\| F(z_0^k)\|^2}$ to a neighborhood around the solution. In particular, we prove that \ref{eq:SEG-RR} can reduce the neighborhood and reach any target accuracy $\epsilon$ by choosing appropriately the step sizes $\gamma_1$ and $\gamma_2$ of the method, establishing in this way an iteration complexity of $\mathcal{O}\left(\frac{1}{\sqrt{nK}}\right)$ in the monotone setting.
This comes in stark contrast with the well-known results on the convergence of \ref{eq:S_SEG} for monotone problems, which require the use of large batch sizes, when constant step sizes are used in order to be able to reduce the neighborhood of convergence and reach any specific target accuracy $\epsilon > 0$. 

\item \textbf{Further Convergence Guarantees: Other without-replacement samplings and novel stepsize selection.} As a byproduct of our analysis, we also provide convergence guarantees for two other popular without-replacements sampling variants of SEG, the Shuffle Once SEG (SEG-SO) (see Alg.~\ref{algo EGSO}), which shuffles the data only at the beginning of the algorithm, and the Incremental Extragradient (IEG) (see Alg.~\ref{algo EG IG}), which does not shuffle the data and processes them in the order that they appear in the dataset. 
For solving strongly monotone and affine VIPs, we also provide convergence guarantees under different stepsize rules. In particular, using a carefully constructed switching stepsize-rule, we prove a $O(1/k)$ rate to the exact solution. The suggested switching stepsize rule describes when one should switch from a constant to a
decreasing stepsize regime, and it is the first time used in the analysis of algorithms utilizing without-replacement samplings. The details for these results are included in Appendix~\ref{Appendix_Further}. 

\item \textbf{Numerical Experiments.} We show the benefits of \ref{eq:SEG-RR} by performing numerical experiments on finite-sum strongly-monotone quadratic and bilinear minimax problems, as well as on Wasserstein GANs
for learning the mean of a multivariate Gaussian distribution. Our
numerical findings corroborate our theoretical results.
\end{enumerate}

\section{Convergence Analysis}

Let us now present our main theoretical results. We start by presenting a sketch of the proof techniques used in our theorems and explaining the difference/main challenge compared to the classical analysis of SEG. We, then, focus on the convergence guarantees for \ref{eq:SEG-RR} in three different classes of VIPs: strongly monotone, affine, and monotone. 

\subsection{Overview of Proof Techniques}
The main challenge in the proof of \ref{eq:SEG-RR} compared to the one of \ref{eq:S_SEG} is that the stochastic oracles $F_{\pi^k_i}$ are no longer unbiased estimators of the deterministic operator $F$.
Our proof is based on the key insight from previous works on random reshuffling in
minimization problems \citep{haochen2019random, ahn2020sgd, gurbuzbalaban2021random} that, for small enough step sizes, the epoch iterates $z_0^k$ of a stochastic algorithm using without-replacement sampling approximately follow the trajectory of the same full-batch algorithm.

Building on the aforementioned idea, we manage to upper bound the distance to $z^*$, $\| z^{k+1}_0 - z^*\|^2$, by three terms:
\begin{eqnarray}\label{ndaojsnda}
  \|z_0^{k+1} - z^*\|^2 \leq C_1 \underbrace{\left\|z_{0}^k - z^* - \gamma_1 n F(\hat{z}^k_0) \right\|^2}_{T_1} \nonumber \\ 
  + \text{ } C_2 \underbrace{\sum_{i=0}^{n-1}\| F_{\pi^k_i}(z_i^k) - F(z_0^k)\|^2}_{T_2} \nonumber \\
  +\text{ } C_3 \underbrace{\sum_{i=0}^{n-1}\left\|z^k_{i} - z_0^k\right\|^2,}_{T_3} 
\end{eqnarray}
where $\hat{z}^k_0 = z^k_0 - \gamma_2 F(z^k_0)$ and $C_1, C_2, C_3$ are constants depending on the properties of the problem in hand.

The term $T_1$ in \eqref{ndaojsnda} can be interpreted as the distance of one step of the full-batch SEG algorithm, starting from the point $z^k_0$, to the optimum $z^*$. Thus, it serves as a measure of the progress that the algorithm that uses the full operator $F$ makes. The term $T_2$, on the other hand, accounts for the fact that \ref{eq:SEG-RR} has access only to a stochastic oracle $F_{\pi^k_i}$ (not the full-batch operator $F$) per iteration. Using, in addition, the intuition that for small enough step sizes, the iterates $z^k_i$ inside an epoch stay ``close'' to the initial point $z_0^k$, the second term $T_2$ measures the distance between the stochastic oracle $F_{\pi^k_i}$ from the (full-batch) operator $F$ (though at different points). Lastly, the term $T_3$ indicates how ``far'' the points inside an epoch are from $z^k_0$. In our proofs, bounding each one of the three terms $T_1, T_2, T_3$ enables us to bound the distance of the iterate $z_0^{k+1}$ from the optimum $z^*$ and thus derive convergence guarantees for \ref{eq:SEG-RR} for the three different classes of VIPs under study.

\subsection{Strongly Monotone VIPs}
We focus on \ref{eq:SEG-RR} with constant step sizes $\gamma_1$ and $\gamma_2$. We first prove linear convergence to a neighborhood of the solution $z^*$. If, in addition, the total number of epochs $K$ is available, we suggest a constant stepsize selection that depends on $K$, which allows us to prove that after a certain number of epochs $K$, \ref{eq:SEG-RR} achieves an iteration complexity of $\Tilde{\mathcal{O}}\left(\frac{1}{nK^2}\right)$ outperforming the $\Tilde{\mathcal{O}}\left(\frac{1}{nK}\right)$ iteration complexity of \ref{eq:S_SEG}.

\begin{theorem}
\label{thm SC-SC4}
Suppose that the operator $F$ is $\mu$-strongly monotone and each $F_i, \, \forall i\in[n]$ is $L_i-$Lipschitz.
\begin{enumerate}[leftmargin=*]
\setlength{\itemsep}{0pt}
    \item Then the iterates of \ref{eq:SEG-RR} with constant step sizes $\gamma_2 = 2 \gamma_1$, $\gamma_1 \leq \frac{\mu}{10L_{max}^2\sqrt{10n^2+2n+54}}$ satisfy:
\begin{eqnarray}
    \hspace{-0.4cm}\Expe{\|z_0^{k} - z^*\|^2 } \leq\left(1 - \frac{\gamma_1 n\mu}{4} \right)^k {\|z_0- z^*\|^2} \nonumber \\ 
    \hspace{+1.cm} +\frac{96L_{max}^2}{\mu^2}\left[(25+n) \gamma_1^2+\gamma_2^2\right] \sigma_*^2 \label{sc1} 
\end{eqnarray}
    \item Let $K$ be the total number of epochs \ref{eq:SEG-RR} is run. For step sizes satisfying $\gamma_2 = 2 \gamma_1$ and $\gamma_1 = \min\left\{\frac{\mu}{10L_{max}^2\sqrt{10n^2+2n+54}}, \frac{4\log (n^{1/2}K)}{\mu nK}\right\}$, the following holds:
\begin{eqnarray}
   \Expe{\|z_0^{K} - z^*\|^2 } &\leq& \Tilde{\mathcal{O}}\left( e^{-\frac{\mu^2K}{L_{max}^2}} + \frac{1}{{nK^2}}\right) \label{sc2}
\end{eqnarray}
\end{enumerate}
\end{theorem}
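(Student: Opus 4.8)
The plan is to convert the three-term decomposition \eqref{ndaojsnda} into a single-epoch geometric recursion of the form $\mathbb{E}\|z_0^{k+1}-z_*\|^2 \le (1-\tfrac{\gamma_1 n\mu}{4})\,\mathbb{E}\|z_0^{k}-z_*\|^2 + b$, and then to unroll it to obtain \eqref{sc1}; part~2 will follow by substituting the prescribed step size into \eqref{sc1}. Throughout I would condition on the filtration $\mathcal{F}^k$ generated by the iterates up to the start of epoch $k$, so that $z_0^k$ is fixed and the fresh permutation $\pi^k$ is an independent uniform permutation of $[n]$. This independence is exactly what lets me replace permutation averages of $F_{\pi^k_i}(z_0^k)$ by the full sample average $\tfrac1n\sum_{j}F_j(z_0^k)$ and invoke the closed-form variance bound $\tfrac1n\sum_i\|F_i(z)-F(z)\|^2 \le A\|z-z_*\|^2 + 2\sigma_*^2$ established in Appendix~\ref{app preparatory_lemmas}.

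The term $T_1=\|z_0^k-z_*-\gamma_1 n F(\hat z_0^k)\|^2$, with $\hat z_0^k=z_0^k-\gamma_2 F(z_0^k)$, is precisely one step of deterministic extragradient with extrapolation step $\gamma_2$ and effective update step $\gamma_1 n$. I would expand the square, write $z_0^k-z_*=(\hat z_0^k-z_*)+\gamma_2 F(z_0^k)$, and apply strong monotonicity in the form $\langle F(\hat z_0^k),\hat z_0^k-z_*\rangle\ge \mu\|\hat z_0^k-z_*\|^2$ (using $F(z_*)=0$). Combined with $L$-Lipschitzness of $F$ and the choice $\gamma_2=2\gamma_1$, this yields $T_1\le(1-c\,\gamma_1 n\mu)\|z_0^k-z_*\|^2$ up to higher-order terms in $\gamma_1$ that the step-size restriction absorbs; this is the origin of the contraction factor $(1-\tfrac{\gamma_1 n\mu}{4})$.

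The heart of the argument, and the step I expect to be the main obstacle, is controlling $T_3=\sum_i\|z_i^k-z_0^k\|^2$ together with the drift part of $T_2$, because the inner iterates are correlated through the shared permutation. Since $z_i^k-z_0^k=-\gamma_1\sum_{j<i}F_{\pi^k_j}(\bar z_j^k)$ with $\bar z_j^k=z_j^k-\gamma_2 F_{\pi^k_j}(z_j^k)$, I would bound each $\|F_{\pi^k_j}(\bar z_j^k)\|^2\le 2L_{\pi^k_j}^2\|\bar z_j^k-z_*\|^2+2\|F_{\pi^k_j}(z_*)\|^2$, unfold $\bar z_j^k$, and reduce everything to $\|z_j^k-z_0^k\|^2$, $\|z_0^k-z_*\|^2$, and $\sigma_*^2$. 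This produces a discrete recursion in $i$ whose solution, for step sizes small enough that the accumulated Lipschitz factors stay bounded, gives $T_3\le \mathcal{O}(\gamma_1^2 n^2)(\|z_0^k-z_*\|^2+\sigma_*^2)$; the bookkeeping of these accumulated factors over the $n$ correlated inner steps is what forces the polynomial $10n^2+2n+54$ under the square root in the step-size condition. For $T_2$ I would split $F_{\pi^k_i}(z_i^k)-F(z_0^k)$ into $F_{\pi^k_i}(z_i^k)-F_{\pi^k_i}(z_0^k)$, bounded by $L_{\pi^k_i}\|z_i^k-z_0^k\|$ and thus feeding back into $T_3$, and $F_{\pi^k_i}(z_0^k)-F(z_0^k)$, whose permutation average is controlled by the variance bound.

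Substituting the bounds on $T_1,T_2,T_3$ into \eqref{ndaojsnda} and taking $\mathbb{E}[\cdot\mid\mathcal{F}^k]$, I would verify that the step-size restriction keeps every non-contractive coefficient of $\|z_0^k-z_*\|^2$ small enough to preserve the net rate $(1-\tfrac{\gamma_1 n\mu}{4})$, while the $\sigma_*^2$ terms assemble into $b$; dividing $b$ by the rate yields the neighborhood $\tfrac{96L_{max}^2}{\mu^2}[(25+n)\gamma_1^2+\gamma_2^2]\sigma_*^2$ of \eqref{sc1}. For part~2, I plug $\gamma_1=\min\{\gamma_{\max},\tfrac{4\log(n^{1/2}K)}{\mu nK}\}$, with $\gamma_{\max}=\tfrac{\mu}{10L_{max}^2\sqrt{10n^2+2n+54}}$, into \eqref{sc1}. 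When the decreasing argument is active, the neighborhood with $\gamma_2=2\gamma_1$ becomes $\tfrac{96L_{max}^2}{\mu^2}(29+n)\gamma_1^2\sigma_*^2=\tilde{\mathcal{O}}\big(\tfrac{1}{nK^2}\big)$ since $(29+n)/n^2=\mathcal{O}(1/n)$, while the contraction $(1-\tfrac{\gamma_1 n\mu}{4})^K\le e^{-\gamma_1 n\mu K/4}$ decays polynomially in $nK$; when the constant argument $\gamma_{\max}$ is active, $\gamma_{\max}n\mu/4=\Theta(\mu^2/L_{max}^2)$ for large $n$, so the contraction delivers the initialization term $e^{-\mu^2 K/L_{max}^2}$. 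Combining the two regimes gives \eqref{sc2}.
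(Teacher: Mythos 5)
Your overall architecture matches the paper's: the same epoch-level decomposition \eqref{ndaojsnda}, the same treatment of $T_1$ via strong monotonicity evaluated at the extrapolated point $\hat z_0^k$ with $F(z_*)=0$ (this is the paper's Lemma~\ref{Lemma: SC-SC2}), the variance-transfer bound $\frac1n\sum_i\|F_i(z)-F(z)\|^2\le A\|z-z_*\|^2+2\sigma_*^2$ (Proposition~\ref{prop: bound for constant assumpt}), and the same two-case analysis of the minimum in part~2. However, there is a genuine gap in your control of the drift term, and it sits exactly at the point that makes this theorem about \emph{random reshuffling} rather than about any without-replacement ordering. You propose to bound $z_i^k-z_0^k=-\gamma_1\sum_{j<i}F_{\pi_j^k}(\bar z_j^k)$ by bounding \emph{each} $\|F_{\pi_j^k}(\bar z_j^k)\|^2$ separately and summing. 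That term-by-term route discards all cancellation in the partial sums $\sum_{j<i}\bigl(F_{\pi_j^k}(z_0^k)-F(z_0^k)\bigr)$ and yields, for the stochastic part, a coefficient of order $\gamma_1^2\, i\cdot n(A\|z_0^k-z_*\|^2+2\sigma_*^2)$, hence after averaging over $i$ a $\sigma_*^2$-coefficient of order $n^2\gamma_1^2$ in $\frac1n\sum_j\|z_j^k-z_0^k\|^2$. The paper instead takes conditional expectation and invokes the without-replacement variance identity $\mathbb{E}\bigl\|\frac1d\sum_{j=0}^{d-1}X_{\pi_j}-\mu\bigr\|^2=\frac{n-d}{d(n-1)}\sigma^2$ (Proposition~\ref{prop:random_reshuffling}, applied as Lemma~\ref{prop: bound prr epoch deviations} with $d=i$), whose factor $\frac{i(n-i)}{n-1}$ averages to $\frac{n+1}{6}$ over $i$ and produces the $\sigma_*^2$-coefficient $2(n+25)\gamma_1^2$ of Lemma~\ref{Lemma: squared-norms-epoch-iterates} --- a full factor of $n$ smaller than what your method delivers. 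Note that single-index unbiasedness ($\mathbb{E}[F_{\pi_i^k}(z)]=F(z)$), which is what your ``replace permutation averages by the full sample average'' remark uses, is not enough: it holds marginally for with-replacement sampling too, and gives no access to the $\frac{n-d}{d(n-1)}$ gain.

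This factor of $n$ is not cosmetic. Propagating your drift bound through the recursion gives a neighborhood of order $\frac{L_{max}^2}{\mu^2}\,n^2\gamma_1^2\sigma_*^2$ in place of $\frac{96L_{max}^2}{\mu^2}\bigl[(25+n)\gamma_1^2+\gamma_2^2\bigr]\sigma_*^2$ in \eqref{sc1}, so with $\gamma_1=\tilde\Theta\bigl(\frac{\log(n^{1/2}K)}{\mu nK}\bigr)$ part~2 would yield only $\tilde{\mathcal{O}}\bigl(e^{-\mu^2K/L_{max}^2}+\frac{1}{K^2}\bigr)$ rather than $\tilde{\mathcal{O}}\bigl(e^{-\mu^2K/L_{max}^2}+\frac{1}{nK^2}\bigr)$. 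Indeed, the paper's own analysis of the Incremental Extragradient (Lemma~\ref{Lemma: squared-norms-epoch-iterates-IEG}), which cannot use Proposition~\ref{prop:random_reshuffling} because the permutation is fixed, does exactly the crude bounding you describe, obtains the $66n(n-1)\gamma_1^2\sigma_*^2$ drift term, and correspondingly ends at the weaker $\tilde{\mathcal{O}}(1/K^2)$ rate; your claimed $T_3\le\mathcal{O}(\gamma_1^2n^2)(\|z_0^k-z_*\|^2+\sigma_*^2)$ asserts the right order for $\sigma_*^2$ but is not achievable by the recursion you sketch. To repair the argument, insert the conditional expectation over the uniform permutation and apply Lemma~\ref{prop: bound prr epoch deviations} to the terms $3\gamma_1^2 i^2\bigl\|\frac1i\sum_{j<i}F_{\pi_j^k}(z_0^k)-F(z_0^k)\bigr\|^2$ before summing over $i$; the rest of your plan then goes through as in the paper.
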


Theorem \ref{thm SC-SC4} indicates in inequality \eqref{sc1} that for constant step sizes the \ref{eq:SEG-RR} algorithm converges linearly to a neighborhood of the solution $z^*$. The neighborhood of convergence is proportional to the step sizes and the variance $\sigma_*^2$ at the optimum point. In particular, Theorem \ref{thm SC-SC4} indicates that the neighborhood around the solution $z^*$ diminishes in relation to the step sizes of the algorithm as $\mathcal{O}(\gamma_1^2 + \gamma_2^2)$. In comparison, \ref{eq:S_SEG} converges linearly to a neighborhood around the solution, with the neighborhood decreasing as $\mathcal{O}(\gamma_1 + \gamma_2)$ (Theorem 3.1 of \citet{gorbunov2022stochastic}). Thus, although both algorithms achieve a linear convergence rate to a neighborhood around the solution, the without-replacement sampling variant will converge for the same step sizes to a smaller neighborhood of $z^*$. 

In addition, with the total number of epochs $K$ available, inequality \eqref{sc2} of Theorem \ref{thm-bil-1} establishes the iteration complexity of \ref{eq:SEG-RR} for achieving an error $\Expe{\|z_0^{K} - z^*\|^2 }\leq \epsilon$. More specifically, after a certain number of epochs satisfying $K \geq \kappa^2 \log(nK^2)$, where $\kappa = \frac{L_{max}}{\mu}$ is the condition number, the second term dominates in the iteration complexity and thus $\Expe{\|z_0^{K} - z^*\|^2 } = \Tilde{\mathcal{O}}\left(\frac{1}{nK^2}\right)$. In contrast, the iteration complexity of \ref{eq:S_SEG} in \citet{gorbunov2022stochastic} is $\Tilde{\mathcal{O}}\left( e^{-\frac{\mu nK}{L_{max}}} + \frac{1}{{nK}}\right)$ and thus after the same number of epochs $K \geq \kappa^2 \log(nK^2)$ the distance from the solution is $\Expe{\|z_0^{K} - z^*\|^2 } = \Tilde{\mathcal{O}}\left(\frac{1}{nK}\right)$. In this case, \ref{eq:SEG-RR} will require less number of epochs (equivalently iterations) to achieve an accuracy $\epsilon$. The difference in the iteration complexity of \ref{eq:SEG-RR} and \ref{eq:S_SEG} showcases the benefit of random reshuffling over uniform with-replacement sampling. 

We note, also, that in the strongly monotone setting the iteration complexity of \ref{eq:S_SEG} with step sizes $\gamma_1 = \frac{1}{6L_{max}}, \gamma_2 = 4\gamma_1$, as shown in \citet{gorbunov2022stochastic}, depends on the condition number as $\mathcal{O}\left(e^{-\kappa}\right)$. Despite this is a better dependence than the one in Theorem \ref{thm SC-SC4}, we highlight that for the step sizes of Theorem \ref{thm SC-SC4} both \ref{eq:SEG-RR} and \ref{eq:S_SEG} have the same $\mathcal{O}(e^{-\kappa^2})$ dependence. Hence, for the step sizes of Theorem \ref{thm SC-SC4}, \ref{eq:SEG-RR} and \ref{eq:S_SEG} will converge with the same rates to the corresponding neighborhoods of solution. Proving convergence of random reshuffling with larger step sizes and better dependence on the condition number is still an open problem. 

We, lastly, compare our results on the convergence of \ref{eq:SEG-RR} with the SGDA-RR algorithm, which is the other frequently used method for solving strongly monotone problems. The iteration complexity of SGDA-RR for step sizes $\gamma_1 = \mathcal{O}\left(\frac{\log(n^{\frac{1}{2}}K^2)}{nK}\right)$, where $K$ is the total number of epochs the algorithm is run, is $\Tilde{\mathcal{O}}\left(e^{-\frac{\mu^2 K}{5L^2}} + \frac{1}{nK^2}\right)$, as established in \citet{das2022sampling}. In our Theorem \ref{thm SC-SC4}, we establish the same iteration complexity (up to constant factors) with SGDA-RR. However, SEG-RR is able to solve VIPs beyond the strongly monotone regime, which is not the case of SGDA-RR, and we examine that below.
\subsection{Affine VIP Operators}
We, now, consider the setting where the variational inequality operator is affine and has the following form:
\begin{eqnarray}
    F(z) = \frac{1}{n} \sum\limits_{i=0}^{n-1} Q_i z + b_i \label{F_w_in_bilinear_games}
\end{eqnarray}
where $F(z)$ has a finite-sum structure with each $F_i(z) = Q_i z + b_i$. 
This setting serves as a generalization of any bilinear min-max optimization problem. For more details on the connection of bilinear games with affine variational inequalities, we refer the interested reader to Appendix \ref{app VI-defs}.

Similarly to the strongly monotone regime, we focus on the convergence of \ref{eq:SEG-RR} with constant step sizes. We prove linear convergence to a neighborhood of the solution $z^*$. If, in addition, the total number of epochs $K$ is available, we show that after a certain number of epochs $K$, \ref{eq:SEG-RR} achieves an iteration complexity of $\Tilde{\mathcal{O}}\left(\frac{1}{nK^2}\right)$. In this setting, since there might be multiple solutions $z^*$, we use as measure of convergence the $\text{dist}(z_0^k, \mathcal{Z}_*)=\min_{z^* \in \mathcal{Z}_*} \| z^0_k - z^*\|^2$, which is the distance of the iterate $z_0^k$ from the solution set $\mathcal{Z}_*$.

\begin{theorem} \label{thm-bil-1}
Suppose that each $F_i, \, \forall i\in [n],$ is monotone, affine and $L_i-$Lipschitz. 
\begin{enumerate}[leftmargin=*]
\setlength{\itemsep}{0pt}
    \item Then the iterates of \ref{eq:SEG-RR} with step sizes\\ $\gamma_2 = 4 \gamma_1$,
        $\gamma_1 \leq \frac{\lambda_{min}^{+}(Q)}{2\sqrt{120} nL^2_{max}}$ satisfy:
        \begin{eqnarray}
        \hspace{-0.25cm} \Expe{\text{dist}(z_0^{k}, \mathcal{Z}_*)}\hspace{-0.25cm} &\leq&\hspace{-0.25cm}\left(1\hspace{-0.07cm} -\hspace{-0.07cm} \frac{\gamma_1n\lambda_{min}^{+}(Q)}{2}\right)^k\hspace{-0.23cm} \text{dist}(z_0, \mathcal{Z}_*)\nonumber\\
             &+& \hspace{-0.29cm}\frac{4L_{max}\left[4n(n+25)\gamma_1^2+ \gamma_2^2\right]\sigma_*^2}{{\lambda_{min}^{+}(Q)}^2n^2} \hspace{+0.49cm} \label{res: affine1}
        \end{eqnarray}
        \item Let $K$ be the total number of epochs the\\ \ref{eq:SEG-RR} is run. For step sizes satisfying $\gamma_2 = 4 \gamma_1$,\\
$\gamma_1 = \min\left\{\frac{\lambda_{min}^{+}(Q)}{2\sqrt{120} nL^2_{max}}, \frac{2\log (n^{1/2}K)}{\lambda_{min}^{+}(Q) nK}\right\}$, it holds:
        \begin{eqnarray}
           \hspace{-0.1cm} \Expe{\text{dist}(z_0^{K}, \mathcal{Z}_*)} \leq \Tilde{\mathcal{O}}\left( e^{\frac{K{\lambda_{min}^{+}}^2(Q)}{4\sqrt{120}L^2_{max}}} + \frac{1}{{nK^2}}\right)
           \label{res: affine2}
        \end{eqnarray}
\end{enumerate}
\end{theorem}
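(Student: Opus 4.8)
The plan is to follow the epoch-to-epoch template sketched in the proof overview and specialize every term to the affine setting, where $F(z)=Q(z-z_*)$ for any $z_*\in\mathcal{Z}_*$ (since $Qz_*+b=0$ with $Q=\frac1n\sum_i Q_i$). First I would unroll one epoch of \ref{eq:SEG-RR}, writing $z_0^{k+1}=z_0^k-\gamma_1\sum_{i=0}^{n-1}F_{\pi^k_i}(\bar z_i^k)$, add and subtract the idealized full-batch extragradient step $z_0^k-\gamma_1 n F(\hat z_0^k)$ with $\hat z_0^k=z_0^k-\gamma_2F(z_0^k)$, and apply Young's inequality to reach the three-term bound \eqref{ndaojsnda}. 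Because there may be many solutions, I would fix $z_*$ to be the projection of $z_0^k$ onto $\mathcal{Z}_*=z_*+\ker Q$, so that $\|z_0^k-z_*\|^2=\text{dist}(z_0^k,\mathcal{Z}_*)$ and the kernel component of the iterate is never penalized; this is what lets the whole argument run with $\text{dist}(\cdot,\mathcal{Z}_*)$ in place of $\|\cdot-z_*\|^2$.

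The core of the proof is the contraction estimate for $T_1$. Using linearity, $T_1=\|Mu\|^2$ with $u=z_0^k-z_*$ and $M=I-\gamma_1 nQ+\gamma_1\gamma_2 nQ^2$; the term $\gamma_1\gamma_2 nQ^2$ is exactly the extragradient correction, and it is this second-order term — absent in plain gradient descent--ascent — that supplies a negative (contractive) contribution on the range of $Q$, while the expansive piece $\gamma_1^2n^2\|Qu\|^2$ must be dominated. The choice $\gamma_2=4\gamma_1$ is what balances these, and the step-size ceiling $\gamma_1\le\frac{\lambda_{min}^+(Q)}{2\sqrt{120}\,nL_{max}^2}$ is precisely the condition under which the net effect is a contraction of $\text{dist}(\cdot,\mathcal{Z}_*)$ at a rate governed by $\lambda_{min}^+(Q)$. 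I expect this to be the main obstacle: for an affine monotone operator the symmetric part may be degenerate (the purely rotational, bilinear case), so the contraction cannot be read off from strong monotonicity as in Theorem \ref{thm SC-SC4}; it has to be extracted from the interplay of $Q$ and $Q^2$ restricted to $(\ker Q)^\perp$, keeping careful track of $\|Qu\|\le L\|u\|$ and $\|Qu\|^2\ge\lambda_{min}^+(Q)^2\,\text{dist}(z_0^k,\mathcal{Z}_*)$.

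For the error terms I would proceed as follows. In $T_2$ I split $F_{\pi^k_i}(z_i^k)-F(z_0^k)=\big(F_{\pi^k_i}(z_i^k)-F_{\pi^k_i}(z_0^k)\big)+\big(F_{\pi^k_i}(z_0^k)-F(z_0^k)\big)$; the first bracket is $L_{max}$-Lipschitz in $\|z_i^k-z_0^k\|$ and folds back into $T_3$, while the second, summed over a full epoch, uses the permutation property $\{\pi^k_i\}=[n]$ to become $\sum_{j=0}^{n-1}\|F_j(z_0^k)-F(z_0^k)\|^2\le n\big(A\,\text{dist}(z_0^k,\mathcal{Z}_*)+2\sigma_*^2\big)$ via the Lipschitz-based variance bound, with $A=\frac2n\sum_iL_i^2$. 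For $T_3$ I would use $z_i^k-z_0^k=-\gamma_1\sum_{j<i}F_{\pi^k_j}(\bar z_j^k)$, apply Cauchy--Schwarz and Lipschitzness to bound $\|F_{\pi^k_j}(\bar z_j^k)\|\le(1+\gamma_2 L_{max})\|F_{\pi^k_j}(z_j^k)\|$, and then control $\|F_{\pi^k_j}(z_j^k)\|^2$ by $2\|F_{\pi^k_j}(z_j^k)-F(z_0^k)\|^2+2\|F(z_0^k)\|^2$ with $\|F(z_0^k)\|\le L\,\text{dist}(z_0^k,\mathcal{Z}_*)^{1/2}$. This produces a coupled system in which $T_3$ is bounded by a multiple of $T_2$ and of $\text{dist}(z_0^k,\mathcal{Z}_*)$; the step-size ceiling makes the coupling coefficients strictly less than one, so the system closes and yields an $\mathcal{O}(\gamma_1^2 n^2)$ drift, of the same order that appears in \eqref{res: affine1}.

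Finally I would assemble the pieces. Substituting the bounds on $T_1,T_2,T_3$ into \eqref{ndaojsnda} gives a one-epoch recursion of the form $\text{dist}(z_0^{k+1},\mathcal{Z}_*)\le\big(1-\tfrac{\gamma_1 n\lambda_{min}^+(Q)}{2}\big)\text{dist}(z_0^k,\mathcal{Z}_*)+\frac{4L_{max}[4n(n+25)\gamma_1^2+\gamma_2^2]\sigma_*^2}{\lambda_{min}^+(Q)^2n^2}$, where the stepsize ceiling is exactly what guarantees that the $A\,\text{dist}$ contributions coming from $T_2,T_3$ are absorbed into the contraction. Unrolling this geometric recursion proves the first claim \eqref{res: affine1}. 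For the second claim I would insert the $K$-dependent choice $\gamma_1=\min\{\frac{\lambda_{min}^+(Q)}{2\sqrt{120}nL_{max}^2},\frac{2\log(n^{1/2}K)}{\lambda_{min}^+(Q)nK}\}$: when the first branch is active the recursion contracts by a constant factor per epoch, producing the exponential term $e^{-K\lambda_{min}^+(Q)^2/(4\sqrt{120}L_{max}^2)}$; when the second branch is active the logarithmic factor drives that exponential term down to order $1/(nK^2)$ while the $\mathcal{O}(\gamma_1^2)$ neighborhood is likewise $\tilde{\mathcal{O}}(1/(nK^2))$, and together these give \eqref{res: affine2}.
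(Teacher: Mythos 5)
Your overall architecture (epoch unrolling, Young's inequality with a $t$ tied to the contraction rate, control of $\sum_i\|z_i^k-z_0^k\|^2$ as in Lemma \ref{Lemma: squared-norms-epoch-iterates}, the Lipschitz-based variance bound at $z_*$, absorption of the $A\,\text{dist}$ terms via the stepsize ceiling, and the two-case analysis of the $K$-dependent stepsize for part 2) does track the paper, and your device of projecting $z_0^k$ onto $\mathcal{Z}_*$ per epoch is an acceptable variant of the paper's choice to keep a generic $z_*$ and pass to $\min_{z_*\in\mathcal{Z}_*}$ only at the end. The genuine gap is your bound for $T_1$. You compare against the full-batch extragradient step, so $T_1=\|Mu\|^2$ with $M=I-\gamma_1 nQ+\gamma_1\gamma_2 nQ^2$, and you propose to extract the contraction from the $Q^2$ correction on $(\ker Q)^\perp$. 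That is the classical EG-on-bilinear mechanism, and it is quantitatively incompatible with the statement. In the purely rotational case ($Q^\top=-Q$, so $\langle u,Qu\rangle=0$ and $\langle u,Q^2u\rangle=-\|Qu\|^2$),
\[
\|Mu\|^2=\|u\|^2-2\gamma_1\gamma_2 n\|Qu\|^2+\gamma_1^2n^2\|(Q-\gamma_2Q^2)u\|^2,
\]
and along an eigen-block with $\|Qu\|=\lambda\|u\|$ one gets $\|Mu\|^2\geq\|u\|^2+\gamma_1^2 n\lambda^2\big(n-2\gamma_2/\gamma_1\big)\|u\|^2$. With $\gamma_2=4\gamma_1$ the epoch-level comparator has update step $\gamma_1 n$ but extrapolation step only $4\gamma_1$, so the usual EG balance ($\gamma_2\gtrsim\gamma_1 n$) fails and $M$ is \emph{expansive} whenever $n>8$, for every $\gamma_1>0$; no stepsize ceiling rescues it. Moreover, even where $M$ contracts, your mechanism yields at best a factor $1-O\big(\gamma_1\gamma_2 n\,{\lambda_{min}^{+}}^2(Q)\big)$, which is second order in $\gamma_1$ and cannot produce the first-order factor $\big(1-\tfrac{\gamma_1 n\lambda_{min}^{+}(Q)}{2}\big)$ claimed in \eqref{res: affine1}.

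The paper proceeds differently precisely at this point: it never compares against the full-batch step. Using affinity it writes the epoch exactly as $z_0^{k+1}-z_*=\big[I-\gamma_1\sum_{i}Q_{\pi_i^k}(I-\gamma_2Q_{\pi_i^k})\big](z_0^k-z_*)-\gamma_1\sum_i\big[Q_{\pi_i^k}(I-\gamma_2Q_{\pi_i^k})(z_i^k-z_0^k+z_*)+(I-\gamma_2Q_{\pi_i^k})b_{\pi_i^k}\big]$, keeping the per-component matrices $Q_i(I-\gamma_2Q_i)$ in the main linear map, and bounds that map by the asserted operator-norm estimate $\big\|I-\gamma_1\sum_iQ_i(I-\gamma_2Q_i)\big\|\leq 1-\gamma_1 n\big(\lambda_{min}^{+}(Q)-\gamma_2L_{max}^2\big)$. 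Note the roles are the opposite of your account: $\lambda_{min}^{+}(Q)$ enters linearly, as a monotonicity-type modulus of the averaged matrix, while the extragradient correction $Q_i^2$ is treated as an adversarial perturbation of size $\gamma_2L_{max}^2$ — it is not the source of the contraction. The bias term is then handled in Lemma \ref{lemma: bil_2ndTerm_bound}, where $F(z_*)=0$ collapses the $z_*$- and $b_i$-contributions to $\gamma_2\sum_iQ_iF_i(z_*)$, after which Lemma \ref{Lemma: squared-norms-epoch-iterates} gives the $2L_{max}\big[\gamma_2^2+2n(n+25)\gamma_1^2\big]\sigma_*^2$ neighborhood matching \eqref{res: affine1}. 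So to recover the stated rate and constants you would need to replace your $T_1$ step by this per-component linear-map estimate (or an equivalent); your plans for $T_2$, $T_3$ and for part 2 would then go through essentially as in the paper.
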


Theorem \ref{thm-bil-1} indicates in \eqref{res: affine1} that \ref{eq:SEG-RR} achieves a linear convergence to a neighborhood of the solution $z^*$, which is proportional to the step sizes and the variance $\sigma_*^2$ at the optimum point. We highlight that the neighborhood of convergence decreases as $\mathcal{O}(\gamma_1^2+\gamma_2^2)$. In contrast, \citet{hsieh2020explore} establish for \ref{eq:S_SEG} a linear rate to a neihgbourhood that decreases as $\mathcal{O}(\gamma_1 + \gamma_2)$. For the step sizes suggested in Theorem \ref{thm-bil-1}, both \ref{eq:S_SEG} and \ref{eq:SEG-RR} converge with a linear rate, however, \ref{eq:SEG-RR} converges to a smaller neighborhood around the solution $z^*$.  

The second point in Theorem \ref{thm-bil-1}, given in \eqref{res: affine2}, establishes the iteration complexity of \ref{eq:SEG-RR} for achieving an error $\Expe{\|z_0^{K} - z^*\|^2 }\leq \epsilon$, assuming knowledge of the total number of epochs $K$. More specifically, after a certain number of epochs $K$ satisfying $K \geq \frac{4\sqrt{120}\lambda_{min}^2(Q)}{L_{max}^2} \log(nK^2)$, the second term dominates in the iteration complexity of \ref{eq:SEG-RR} (inequality \eqref{res: affine2}) and thus $\Expe{\|z_0^{K} - z^*\|^2 } = \Tilde{\mathcal{O}}\left(\frac{1}{nK^2}\right)$. In contrast, after the same number of epochs, the iteration complexity of \ref{eq:S_SEG} with constant step sizes $\gamma_1$ and $\gamma_2$ of \citet{hsieh2020explore} is equal to $\Tilde{\mathcal{O}}\left(\frac{1}{{nK}}\right)$.

\subsection{Monotone Operators}
In this part, we focus on the setting where the operator $F(z)$ in the VIP \eqref{VI-problem} is monotone. In this case,
 we prove a sublinear convergence of the weighted average norm $\sum_{k=0}^{K-1} w_k\Expe{\| F(z_0^k)\|^2}$ to a neighborhood around the solution $z^*$. In addition, for step sizes depending on the total number of epochs $K$, we prove that \ref{eq:SEG-RR} can reduce the neighborhood and reach any target accuracy $\epsilon > 0$, establishing in this way an iteration complexity of $\mathcal{O}\left(\frac{1}{\sqrt{nK}}\right)$ after a certain number of epochs.
 As a comparison, \ref{eq:S_SEG} can guarantee convergence to the same specific target accuracy $\epsilon > 0$, only if it is run with large batch sizes.

\begin{theorem}\label{theorem monotone my}
   Suppose that the operator $F$ is monotone and each $F_i, \, \forall i\in[n],$ is $L_i-$Lipschitz. Let $K$ be the total number of epochs \ref{eq:SEG-RR} is run and $z^* \in \mathcal{Z}_*$.
   \begin{enumerate}
       \item The iterates of \ref{eq:SEG-RR} with step sizes\\ $\gamma_1 \leq \gamma_{1, max}, \gamma_2 \leq \frac{1}{L_{max}}$ satisfy:
        \begin{eqnarray}
            \hspace{-0.3cm}\sum_{k=0}^{K-1} w_k\Expe{\| F(z_0^k)\|^2} \hspace{-0.25cm}&&\hspace{-0.25cm}\leq \frac{24 \|z_0 -z^*\|^2}{\gamma_2^2n^2K} \nonumber \\
            &&\hspace{-3.0cm} + \frac{4D_1\gamma_1 \left[2\gamma_1^2 (24n^2-23n+1) + \gamma_2^2\right] \sigma_*^2}{\gamma_2^2} \hspace{+0.4cm} \label{1st-pt-monotone_my} 
        \end{eqnarray}
        \item For step sizes $\gamma_2 = \frac{1}{L_{max} n^{\frac{3}{4}} K^{\frac{1}{4}}}$ and $ \gamma_1 = \gamma_{1, max}$, the iterates of \ref{eq:SEG-RR} satisfy 
        \begin{eqnarray}
           \hspace{-0.3cm} \sum_{k=0}^{K-1} w_k\Expe{\| F(z_0^k)\|^2} \hspace{-0.27cm}&\leq& \hspace{-0.27cm} \frac{24L_{max}^2 \|z_0 -z^*\|^2}{\sqrt{nK}} \nonumber \\
        &+& \hspace{-0.35cm}\frac{D_1L_{max}^2(24n^2-23n+1)\sigma_*^2}{D_2^3 n^{\frac{3}{2}} K^{\frac{5}{2}}}\nonumber \\
        &+& \hspace{-0.25cm} \frac{2D_1\sigma_*^2}{D_2nK} \hspace{-0.4cm} \label{2nd-pt-monotone_my}
        \end{eqnarray}
   \end{enumerate}
   where $w_{k} = \frac{G_{k+1}}{\sum_{j=0}^{K-1}G_{j+1}}, G_j = \left(\frac{1}{\alpha}\right)^j$ and\\
    $\gamma_{1, max} = \min\left\{\frac{\gamma_2^2n}{4\left(\frac{n}{L_{max}} + 1\right)^2}, \frac{1}{3\sqrt{2}nL_{max}}, \frac{1}{2nK D_2}\right\}$,
    \\
    $\alpha = 1 + 2 \gamma_1 D_2, C = \frac{3(1 + L^2+L_{max})}{L^2_{max}}, D_1 = 24CL_{max}^2,$\\
    $D_2 = nL^2+L_{max}+10n^2C(11L^2 + 15A)$.
\end{theorem}
\vspace{-0.3cm}
Theorem \ref{theorem monotone my} indicates a sublinear convergence for \ref{eq:SEG-RR}. The convergence is in an average sense, i.e. the weighted average operator
norm converges to a neighborhood around a solution $z^*$, which is proportional to the step sizes and the variance $\sigma_*^2$ at the optimum. 
Thus, for smaller step sizes $\gamma_1$ and $\gamma_2$ we expect that the algorithm will converge to a smaller neighborhood around $z^*$.

In contrast, in the convergence analysis of \ref{eq:S_SEG}, the neighborhood around the solution cannot be reduced by selecting only the step sizes.  More specifically, \citet{gorbunov2022stochastic} prove the following upper bound (Corollary E.4 for $\gamma_2 = 4 \gamma_1$):
\begin{eqnarray}
   \Expe{\sum_{k=0}^{K-1}\hspace{-0.05cm} w_k\| F(z_0^k)\|^2} &\leq& \frac{\|z_0- z^*\|^2}{2\gamma_1\gamma_2 nK} + 6 \sigma_*^2 \label{Gr-bound}
\end{eqnarray}
where $w_k$
is a different weighted average of the iterates with weights that depend on the step sizes and the parameters of the problem. The second term on the right-hand side of \eqref{Gr-bound} apparently does not depend on the step sizes of the algorithm. Thus, one cannot reduce the neighborhood of convergence around the solution $z^*$ arbitrarily, even by selecting step sizes that depend on the total number of epochs the algorithm is run.
\vspace{-0.1cm}

A minibatch of size $\mathcal{O}(nK)$ is required according to \cite{gorbunov2022stochastic} in order for \ref{eq:S_SEG} to reduce the variance around the optimum and achieve an arbitrary accuracy $\epsilon > 0$. In contrast, \ref{eq:SEG-RR} can achieve an arbitrary accuracy without the necessity of large batch sizes by selecting step sizes that depend on the total number of epochs, as shown in \eqref{2nd-pt-monotone_my}. In particular, 
after a certain number of epochs $K \geq \mathcal{O}\left(\sqrt{n}\right)$ (see equation \eqref{com-result-monot} in Appendix \ref{sec: iter_comp_seg_rr}), the first term dominates in the right-hand side of inequality \eqref{2nd-pt-monotone_my} and thus \ref{eq:SEG-RR} achieves an $\mathcal{O}\left(\frac{1}{\sqrt{nK}}\right)$ accuracy, arbitrarily close to a solution. This indicates an intrinsic difference in the batch sizes required in the two methods, \ref{eq:S_SEG} and \ref{eq:SEG-RR}, to converge arbitrarily close to the exact solution $z^*$.
\vspace{-0.1cm}

\section{Numerical Experiments}
In this section, we show the benefits of SEG-RR by performing numerical experiments\footnote{The code for reproducing our experimental results is available at \url{https://github.com/emmanouilidisk/Stochastic-ExtraGradient-with-RR}.} in strongly monotone quadratic and bilinear minimax problems, as well as on Wasserstein GANs for learning the mean of a multivariate Gaussian distribution. 

In particular, we compare SEG-RR, SEG-SO, and IEG with the uniform with-replacement sampling \ref{eq:S_SEG} (denoted as \textit{SEG} in the plots). For each experiment, we report the average of 5 runs and plot the relative error $\log(\frac{\| z^k_0 - z^*\|^2}{\|z_0 - z^*\|^2})$ over the iterations the algorithm is run. 

In the strongly monotone setting, similarly to \cite{loizou2021stochastic,gorbunov2022stochastic, choudhury2023single}, we consider a quadratic strongly convex strongly concave minimax problem that has the following form:
\begin{align*}
\min _{x \in \mathbb{R}^{d}} \max _{y \in \mathbb{R}^{d}} \frac{1}{n} \sum_{i=1}^n &\frac{x^\top A_i x}{2} +\hspace{-0.05cm} x^\top B_i y - \hspace{-0.05cm}\frac{y^\top C_i y}{2}\hspace{-0.05cm} + \hspace{-0.05cm}a_i^\top x \hspace{-0.05cm}-\hspace{-0.05cm} c_i^\top y
\end{align*}
while in the affine regime, we focus on the following two-player bilinear zero-sum game:
\begin{eqnarray}
    \min_{x \in \mathbb{R}^{d}} \max_{y \in \mathbb{R}^{d}} \frac{1}{n}\sum\limits_{i=1}^n x^\top B_i y + a_i^\top x - c_i^\top y \nonumber
\end{eqnarray}

\begin{figure}[t]
\centering 
    \includegraphics[width=\linewidth]{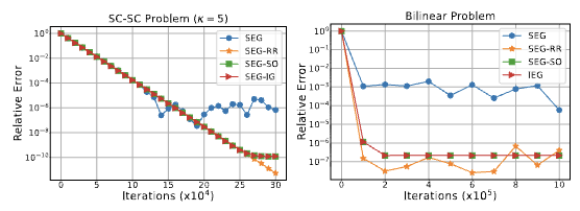}
  \caption{The left plot corresponds to a strongly monotone problem, while the right plot corresponds to a bilinear game. \ref{eq:SEG-RR} with the theoretical step sizes converges to a smaller relative error compared to the other variants of SEG.}
  \label{Fig:performance exp}
\end{figure}

We provide details regarding the way that the matrices $A_i, B_i, C_i$ and the vectors $a_i, c_i$ are sampled in the above problems along with a full description of our experimental setup in Appendix \ref{app experiments}.

\paragraph{Theoretical step sizes.}
In the first experiment, we focus on validating Theorems \ref{thm SC-SC4} and \ref{thm-bil-1} by running \ref{eq:SEG-RR} using the step sizes proposed in our analysis. In Figure~\ref{Fig:performance exp}, we observe that both in the strongly monotone and the bilinear case \ref{eq:SEG-RR} with constant step sizes converges linearly to a neighborhood around the minimax solution $z^*$, verifying our theoretical results. 

In addition, Figure \ref{Fig:performance exp} shows that the three without replacement strategies \ref{eq:SEG-RR}, SEG-SO, and IEG outperform the uniform with-replacement sampling counterpart of SEG for the same number of epochs/iterations. In our experiments, we also observe that \ref{eq:SEG-RR} reaches the same neighborhood of convergence (if not smaller) compared to SEG-SO and IEG. We have run experiments, also, for problems with different Lipschitz constants and have observed similar behavior of convergence for \ref{eq:SEG-RR}. The additional experiments for different Lipschitz parameters can be found in Appendix \ref{app add experiments}.

\paragraph{Beyond Theory: Larger step sizes.} In the second set of experiments, we investigate the behavior of \ref{eq:SEG-RR} with larger step sizes than the ones that our theory predicts. That is, we use larger step sizes proposed in previous analyses of \ref{eq:S_SEG} and compare \ref{eq:SEG-RR} and \ref{eq:S_SEG} using these step sizes selection. In particular, for strongly monotone problems, we run experiments for the step sizes proposed in the analysis of \ref{eq:S_SEG} from \citet{gorbunov2022stochastic} where $\gamma_1 = \frac{1}{6L_{max}}$ and $\gamma_2 = 4\gamma_1$ while for bilinear games, we use the step sizes $\gamma_1 = \frac{0.1}{(t+19)^{r_\eta}}, \gamma_2 = \frac{1}{(t+19)^{r_{\gamma}}}$ where $r_{\gamma} = 0, r_{\eta} =0.7$ suggested in the analysis of SEG for bilinear games in \citet{hsieh2020explore}.

\begin{figure}[t]
\centering 
\includegraphics[width=\linewidth]{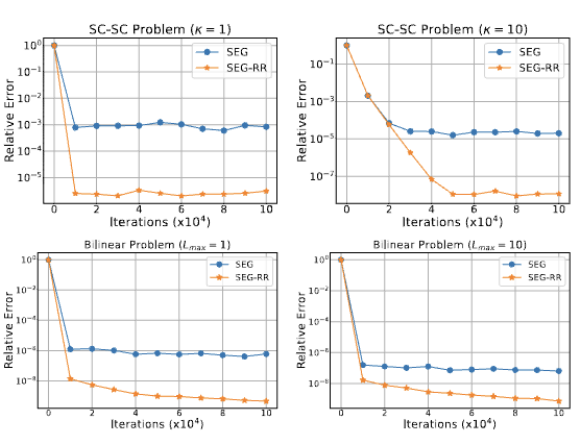}  
\caption{First-row: SC-SC problem. Second-row: Bilinear Game. \ref{eq:SEG-RR} outperforms SEG in problems with different condition numbers (step size used in SC-SC problem as in 
\citet{gorbunov2022stochastic}, while step size used in Bilinear Game as in \citet{hsieh2020explore}).
}
\label{Figure: multiple cond and steps}
\end{figure}

In Figure \ref{Figure: multiple cond and steps}, we observe that \ref{eq:SEG-RR} achieves convergence to a smaller neighborhood than \ref{eq:S_SEG} for both strongly monotone and bilinear problems. We have, also, conducted additional experiments for more step size and problems with different Lipschitz parameters. We refer the interested reader to Appendix \ref{app add experiments} for a dedicated section. 

In all of the experiments, \ref{eq:SEG-RR} achieves at least as good (if not better) performance than \ref{eq:S_SEG}, advocating for the use of random reshuffling in practical scenarios, even with step size larger than the ones in our theoretical convergence guarantees.

\paragraph{Wasserstein GANs.}
In our last experiment, we train a Wasserstein GAN (WGAN)~\citep{pmlr-v70-arjovsky17a} for learning the mean of a Multivariate Gaussian distribution. In this scenario, the optimization objective of the WGAN has the following form:
\begin{equation}
    \inf_{\theta} \sup_{w} \mathbb{E}_{x\sim N(\mu, \Sigma)}\left[ \langle w, x \rangle \right] - \mathbb{E}_{z\sim N(0,\Sigma)}\left[\langle w, z + \theta \rangle\right]\nonumber 
\end{equation}

In this setting, the discriminator is a linear function $D(x; w) = \langle w, x \rangle$ of the parameter $w \in \mathbb{R}^d$, where the input data point is denoted by $x$. On the other hand, the generator takes as input a random noise vector $z \sim N(0, \frac{1}{10} I)$ in $\mathbb{R}^d$ and outputs the vector $G(z; \theta) = z + \theta$, which is a linear function of the parameter $\theta\in \mathbb{R}^d$. The goal of the generator is to find the mean $\mu$ of the underlying true distribution $\mathcal{N}(\mu, \Sigma)$, where $\mu = [3, 4]^T$ and $\Sigma = \frac{1}{10} I$.

For the comparison of \ref{eq:S_SEG} and \ref{eq:SEG-RR}, we train the WGAN with each one of the two methods. We use the same constant step size for both algorithms with $\gamma_{2}= 4 \gamma_{1}, \gamma_1 =0.01$ being the extrapolation and update step size respectively in both the generator and the discriminator. Figure \ref{Fig: wgan} shows clearly that the generator trained with \ref{eq:SEG-RR} is able to converge closer to the optimal weights than the generator trained with \ref{eq:S_SEG}.

\begin{figure}[t]
\centering 
  \includegraphics[width=80mm]{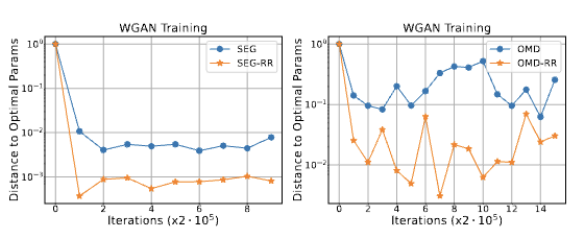}
  \caption{Left: WGAN trained with \ref{eq:SEG-RR} or \ref{eq:S_SEG} (denoted as SEG). Right plot: WGAN trained with OMD-RR or OMD. Random reshuffling helps the generator converge closer to the mean $\mu = [3, 4]^T$ of the Gaussian than with-replacement sampling for either the SEG or OMD algorithm.}
  \label{Fig: wgan}
\end{figure}

Lastly, as in \citet{daskalakis2018training}, we train a WGAN with the use of Optimistic Mirror Descent (OMD). Aiming to see the effect of random reshuffling even for this algorithm, we train the WGAN using (i) uniform with-replacement sampling OMD and (ii) OMD with random reshuffling (OMD-RR). We let the step size of the generator and the discriminator be $\gamma_G=0.02, \gamma_D=0.01$ respectively. In Figure \ref{Fig: wgan}, we observe that random reshuffling allows the OMD algorithm to achieve a smaller distance from the generator's optimal parameters, indicating the benefits of using random reshuffling on top of more popular algorithms.

\section{Conclusion}
We analyze \ref{eq:SEG-RR} for strongly monotone, affine, and monotone VIPs. We show that SEG equipped with without-replacement samplings can outperform the iteration complexity of \ref{eq:S_SEG} after a certain number of epochs. Additionally, in the monotone case, we prove that without-replacement samplings allow the algorithm to converge to an arbitrary accuracy $\epsilon > 0$ without the necessity of having large batch sizes. We aspire that our proof techniques will be a starting point for further results in the field of without-replacement samplings for solving VIPs. In this scope, extending the convergence analysis of \ref{eq:SEG-RR} to structured non-monotone settings, establishing convergence guarantees for the Stochastic Past ExtraGradient (SPEG) \citep{choudhury2023single} with random reshuffling, and developing the random reshuffling literature for distributed VIPs \citep{zhang2023communication,beznosikov2022distributed} are exciting open research questions that remain to be addressed in the future.

\section*{Acknowledgements}
Konstantinos Emmanouilidis acknowledges support from a MINDS Fellowship funded by grant NSF 1934979 ``HDR TRIPODS: Institute for the Foundations of Graph and Deep Learning.'' Ren\'e Vidal acknowledges the support of the NSF-Simons grant 2031985 ``Research Collaborations on the Mathematical and Scientific Foundations of Deep Learning.'' Nicolas Loizou acknowledges support from CISCO Research.

\bibliography{main}

\section*{Checklist}
\begin{enumerate}
 \item For all models and algorithms presented, check if you include:
 \begin{enumerate}
   \item A clear description of the mathematical setting, assumptions, algorithm, and/or model. [Yes]
   \item An analysis of the properties and complexity (time, space, sample size) of any algorithm. [Yes]
   \item (Optional) Anonymized source code, with specification of all dependencies, including external libraries. [Not Applicable]
 \end{enumerate}
 \item For any theoretical claim, check if you include:
 \begin{enumerate}
   \item Statements of the full set of assumptions of all theoretical results. [Yes]
   \item Complete proofs of all theoretical results. [Yes, they are included in the Supplemental Material.]
   \item Clear explanations of any assumptions. [Yes]     
 \end{enumerate}

 \item For all figures and tables that present empirical results, check if you include:
 \begin{enumerate}
   \item The code, data, and instructions needed to reproduce the main experimental results (either in the supplemental material or as a URL). [Yes]
   \item All the training details (e.g., data splits, hyperparameters, how they were chosen). [Yes]
         \item A clear definition of the specific measure or statistics and error bars (e.g., with respect to the random seed after running experiments multiple times). [Yes]
         \item A description of the computing infrastructure used. (e.g., type of GPUs, internal cluster, or cloud provider). [Not Applicable]
 \end{enumerate}

 \item If you are using existing assets (e.g., code, data, models) or curating/releasing new assets, check if you include:
 \begin{enumerate}
   \item Citations of the creator If your work uses existing assets. [Yes]
   \item The license information of the assets, if applicable. [Not Applicable]
   \item New assets either in the supplemental material or as a URL, if applicable. [Not Applicable]
   \item Information about consent from data providers/curators. [Not Applicable]
   \item Discussion of sensible content if applicable, e.g., personally identifiable information or offensive content. [Not Applicable]
 \end{enumerate}

 \item If you used crowdsourcing or conducted research with human subjects, check if you include:
 \begin{enumerate}
   \item The full text of instructions given to participants and screenshots. [Not Applicable]
   \item Descriptions of potential participant risks, with links to Institutional Review Board (IRB) approvals if applicable. [Not Applicable]
   \item The estimated hourly wage paid to participants and the total amount spent on participant compensation. [Not Applicable]
 \end{enumerate}

 \end{enumerate}

\onecolumn
\appendix
\aistatstitle{\vspace{-0.3cm}Stochastic Extragradient with Random Reshuffling:\\ Improved Convergence for Variational Inequalities\\ Supplementary Material\vspace{-0.3cm}}
\vspace{-3cm}

The Supplementary Material is organized as follows: In Section \ref{app preparatory_lemmas}, we provide some preparatory lemmas and propositions. Section \ref{app theorems for SEG-RR} presents the proofs of our main theorems for \ref{eq:SEG-RR}. Section \ref{Appendix_Further} provides further convergence guarantees for \ref{eq:SEG-RR} with decreasing/switching step size. We, also, explain how the convergence of SEG-SO and IEG is obtained as a corollary of \ref{eq:SEG-RR} analysis. In Section \ref{app experiments}, we describe in detail our experimental setup and provide additional experiments. 
{\small\tableofcontents}
\newpage

\section{Preparatory Lemmas \& Propositions} \label{app preparatory_lemmas}
We start by providing the basic notation and some useful inequalities we use in our proofs, as well as essential preliminaries on variational inequalities. In subsection \ref{sec: random_reshuffling_prop}, a proposition about random reshuffling is provided that is critical in the analysis of stochastic algorithms equipped with without-replacement sampling. In subsection \ref{app: bound for const assumpt}, we state a proposition for bounding the variance of the stochastic oracles $F_i$, and in subsection \ref{sec:preparatory}, we conclude this section with lemmas that will be necessary in the proofs of our main theorems.

\subsection{Notation}
We start by introducing the notation that will be useful for stating formally our main results. Let $[n] = \{1, ..., n\}$ and $\mathbb{S}_n$ be the symmetric group of $[n]$. We denote with $\pi^k$ the permutation of the random reshuffling algorithm at epoch $k$ and with $\pi^k_i$ the $i$-th element of the permutation $\pi^k$, for $0 \leq i \leq n-1$. The $i$-th iterate of the algorithm at the $k$-th epoch will be indicated by $z^k_i$. The expectation over the uniform distribution of all permutations $\mathcal{D} = \mathcal{U}(\mathbb{S}_n)$ condition on the natural filtration $\mathcal{F}_k$ of $z_0^{k}$ is denoted by $\Expep{\cdot}$. The expectation taking into account all the stochasticity of the algorithm is denoted by $\Expe{\cdot}$. 

We, also, denote the extrapolation and update step of the \ref{eq:SEG-RR} algorithm with
\begin{eqnarray}
    \Bar{z}_{i-1}^k = z_{i-1}^k - \gamma_2 F_{\pi_{i-1}^k}(z_{i-1}^k) \label{SEG_extrapolation_step}\\
    z_i^{k} = z_{i-1}^k - \gamma_1 F_{\pi_{i-1}^k}(\Bar{z}_{i-1}^k) \label{SEG_update_rule}
\end{eqnarray}
as well as an additional variable useful in our proofs with
\begin{eqnarray}
    \hat{z}_i^k = z_i^k - \gamma_2 F(z_i^k) \label{hat_z}
\end{eqnarray}
\subsection{Useful Inequalities}
In this section, we provide inequalities that will be useful in our proofs
\begin{eqnarray}
		\left\|\sum_{i=1}^n x_i\right\|^2 &\leq& n\sum_{i=1}^n\left\|x_i\right\|^2 \label{ineq1} \\
		\left\|a - b\right\|^2 &\geq& \frac{1}{2} \left\|a \right\|^2 - \left\|b\right\|^2 \label{ineqa_b} \\
		\langle a, b \rangle& =& \frac{1}{2} \left[\left\|a \right\|^2 + \left\|b\right\|^2 - \| a-b\|^2\right] \label{ineq inner product} \\
		e^{-x} &\geq& 1 - x, \forall x \geq 0 \label{ineq exponential}
\end{eqnarray}
Using Jensen inequality for $f(x) = \|x\|^2$ yields $\forall t \in (0,1)$ the below inequality:
\begin{eqnarray} 
			\|a+b\|^2 = \left\|\frac{t}{t} a+ \frac{1-t}{1-t}b\right\|^2 &\leq& t\left\| \frac{a}{t}\right\|^2 +(1-t) \left\|\frac{b}{1-t} \right\|^2 = \frac{1}{t}\| a \|^2 + \frac{1}{1-t}\|b\|^2  \nonumber \\ 
    \iff \|a+b\|^2&\leq& \frac{1}{t}\| a \|^2 + \frac{1}{1-t}\|b\|^2  
        \label{ineq1a}
\end{eqnarray}
Substituting $t = 1 - \frac{1}{2}\gamma_1 n\mu \in (0,1)$ in inequality \eqref{ineq1a}, we have that
\begin{eqnarray} 
			\|a+b\|^2 \leq \frac{1}{1 - \frac{1}{2}\gamma_1 n \mu}\| a \|^2 + \frac{2}{\gamma_1 n \mu}\|b\|^2\label{Youngwitht=1-gamma}
\end{eqnarray}
Similarly, substituting $t = 1 -\gamma_1 n(\lambda_{min}^+(Q) -\gamma_2 L_{max}^2)\in (0,1)$ in inequality \eqref{ineq1a} we get
\begin{eqnarray} 
			\|a+b\|^2 \leq \frac{1}{1 -\gamma_1 n(\lambda_{min}^+(Q) -\gamma_2 L_{max}^2)}\| a \|^2 + \frac{1}{\gamma_1 n(\lambda_{min}^+(Q) -\gamma_2 L_{max}^2)}\|b\|^2\label{Youngforbil}
\end{eqnarray}
\subsection{Min-Max Optimization and Variational Inequalities}
\label{app VI-defs}

In the following, we establish the connection between min-max optimization problems and VIPs. We focus on bilinear min-max optimization problems and explain how they can be cast as a special case of affine VIPs. Similar connections can be established for strongly convex-strongly concave and convex-concave min-max optimization problems.  

Given a bilinear game of the following form
\begin{eqnarray}
  \min_{x \in \mathbb{R}^{d_x}} \max_{y \in \mathbb{R}^{d_y}} f(x,y) := \frac{1}{n} \sum\limits_{i=0}^{n-1} x^\top B_i y + a_i^\top x - c_i^\top y, \label{bilinear game}
\end{eqnarray}
letting $Q_i = \left[\begin{matrix}
        0 & B_i \\
        -B_i^T & 0 \\
\end{matrix}\right], b_i = \left[\begin{matrix}
        a_i \\
        c_i \\
\end{matrix}\right]$ one can observe that the problem corresponds to a variational inequality with an affine operator $F(z) = \frac{1}{n} \sum\limits_{i=0}^{n-1} Q_i z + b_i$. 
Similarly, when the minimax problem is (strongly) convex-(strongly) concave, then the associated variational inequality operator $F(z)$ is (strongly) monotone. Thus, minimax optimization problems are a special case of the problems encapsulated under the more general framework of variational inequalities.

\subsection{Proposition about Random Reshuffling}
\label{sec: random_reshuffling_prop}
We, next, state a proposition about random reshuffling that will turn out to be helpful in deriving the lemmas of Section \ref{sec:preparatory}.

\begin{proposition}[\citet{mishchenko2020random}] \label{prop:random_reshuffling}
Let $\{X_1,\dotsc, X_{n}\}\in \mathbb{R}^d$ be a population of $n$ random vectors, $\mu \triangleq \frac{1}{n}\sum_{i=1}^{n} X_i$ the population average and $\sigma^2 \triangleq \frac{1}{n}\sum_{i=1}^{n} \norm{X_i-\mu}^2$ the population variance.\\ Take a sample $\{X_{\pi_0}, ..., X_{\pi_{d-1}} \}$ of $d\in [n]$ random vectors from $\{X_1,\dotsc, X_{n}\}$ uniformly at random without replacement and let $\Bar{X}=\frac{1}{d}\sum_{i=0}^{d-1} X_{\pi(i)}$ be the sample average and $\operatorname{Var}(\Bar{X})$ the sample variance. 

Then, we have that:
	\begin{align}
		\mathbb{E}_{\pi \in \mathcal{S}}\left[\norm{\Bar{X} - \mu }^2\right]= \mathbb{E}_{\pi \in \mathcal{S}}\left[\norm{\frac{1}{d}\sum_{i=0}^{d-1} X_{\pi_i} - \frac{1}{n}\sum_{i=1}^{n} X_i}^2\right] = \frac{n-d}{d(n-1)}\sigma^2. \label{eq:sampling_wo_replacement}
	\end{align}
 where the expectation is taken with respect to the set $\mathcal{S}$, which is the set of permutations of length $d$ of $[n]$.
\end{proposition}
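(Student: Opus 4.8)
The plan is to reduce everything to a direct second-moment computation after centering the population. First I would introduce the centered vectors $Y_i = X_i - \mu$, which satisfy $\sum_{i=1}^n Y_i = 0$ and $\frac{1}{n}\sum_{i=1}^n \|Y_i\|^2 = \sigma^2$. Since $\bar{X} - \mu = \frac{1}{d}\sum_{i=0}^{d-1} Y_{\pi_i}$, expanding the squared norm gives
\[
\|\bar{X} - \mu\|^2 = \frac{1}{d^2}\sum_{i=0}^{d-1}\sum_{j=0}^{d-1}\langle Y_{\pi_i}, Y_{\pi_j}\rangle = \frac{1}{d^2}\left[\sum_{i=0}^{d-1}\|Y_{\pi_i}\|^2 + \sum_{i\neq j}\langle Y_{\pi_i}, Y_{\pi_j}\rangle\right],
\]
and the goal is to take expectations of the diagonal and off-diagonal contributions separately.

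For the diagonal terms, I would use that under uniform sampling without replacement each position $\pi_i$ is marginally uniform over $[n]$, so $\mathbb{E}\|Y_{\pi_i}\|^2 = \frac{1}{n}\sum_{k=1}^n\|Y_k\|^2 = \sigma^2$ for every $i$; summing over the $d$ positions yields $d\sigma^2$. For the off-diagonal terms, the key structural fact is that any ordered pair $(\pi_i,\pi_j)$ with $i\neq j$ is uniformly distributed over the $n(n-1)$ ordered pairs of distinct indices, whence $\mathbb{E}\langle Y_{\pi_i}, Y_{\pi_j}\rangle = \frac{1}{n(n-1)}\sum_{k\neq l}\langle Y_k, Y_l\rangle$ for each of the $d(d-1)$ such pairs.

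The main obstacle, and the step where the without-replacement structure genuinely enters, is evaluating the population sum $\sum_{k\neq l}\langle Y_k, Y_l\rangle$. Here I would exploit the centering: since $\sum_{k=1}^n Y_k = 0$, we have $0 = \|\sum_{k} Y_k\|^2 = \sum_{k}\|Y_k\|^2 + \sum_{k\neq l}\langle Y_k, Y_l\rangle$, so $\sum_{k\neq l}\langle Y_k, Y_l\rangle = -\sum_{k}\|Y_k\|^2 = -n\sigma^2$. This yields $\mathbb{E}\langle Y_{\pi_i}, Y_{\pi_j}\rangle = -\frac{\sigma^2}{n-1}$, the negative value reflecting the anti-correlation induced by drawing distinct indices.

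Combining the two contributions, I would substitute back to obtain
\[
\mathbb{E}_\pi\!\left[\|\bar{X}-\mu\|^2\right] = \frac{1}{d^2}\left[d\sigma^2 - \frac{d(d-1)}{n-1}\sigma^2\right] = \frac{\sigma^2}{d}\left(1 - \frac{d-1}{n-1}\right) = \frac{n-d}{d(n-1)}\sigma^2,
\]
which is exactly the claimed identity. The only remaining work is the routine algebraic simplification of the bracket, and I would note the sanity check $d=n$, where the right-hand side vanishes, consistent with $\bar{X} = \mu$ holding deterministically once the entire population is drawn.
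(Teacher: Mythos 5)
Your proof is correct and follows essentially the same route as the paper: both arguments reduce the claim to the diagonal/off-diagonal decomposition of $\frac{1}{d^2}\sum_{i,j}\mathbb{E}\langle X_{\pi_i}-\mu, X_{\pi_j}-\mu\rangle$ and evaluate the off-diagonal expectation as $-\frac{\sigma^2}{n-1}$ using exchangeability of distinct pairs together with the centering identity $\sum_{k}(X_k-\mu)=0$. Your phrasing in terms of centered vectors $Y_i$ rather than covariances is a purely cosmetic repackaging of the paper's computation.
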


\begin{proof}
    We first establish the identity $\mathrm{cov}(X_{\pi_i}, X_{\pi_j})=-\frac{\sigma^2}{n-1}, \forall i \neq j$ as follows:
    \begin{eqnarray*}
        \mathrm{cov}(X_{\pi_i}, X_{\pi_j})
        &=& \frac{1}{n(n-1)} \sum_{l=1}^{n} \sum_{m=1,m\neq l}^{n} \Expe{X_l - \mu, X_m - \mu} \\
        &=& \frac{1}{n(n-1)}\sum_{l=1}^{n}\sum_{m=1}^{n}\Expe{X_l - \mu, X_m - \mu} - \frac{1}{n(n-1)}\sum_{l=1}^{n} \norm{X_l - \mu}^2 \\
        &=& \frac{1}{n(n-1)}\sum_{l=1}^{n} \Expe{X_l - \mu, \sum_{m=1}^{n}(X_m - \mu)} - \frac{\sigma^2}{n-1} =-\frac{\sigma^2}{n-1}.
    \end{eqnarray*}
   We, now, turn to the formula for sample variance:
    \begin{align*}
        \operatorname{Var}(\Bar{X})
        = \mathbb{E}_{\pi \in \mathcal{S}}\left[\norm{\Bar{X} - \mu }^2\right] &= \frac{1}{d^2} \sum_{i=0}^{d-1}\sum_{j=0}^{d-1} \mathrm{cov}(X_{\pi_i}, X_{\pi_j}) \\
    &= \frac{1}{d^2}\left[ \sum_{i=0}^{d-1}\sum_{j=i}^{d-1} \mathrm{cov}(X_{\pi_i}, X_{\pi_j}) + \sum_{i=0}^{d-1}\sum_{j=0,j\neq i}^{d-1} \mathrm{cov}(X_{\pi_i}, X_{\pi_j}) \right] \\
    &= \frac{1}{d^2}\left[ \sum_{i=0}^{d-1}\sum_{j=i}^{d-1} \mathrm{Var}(X_{\pi_i}) + \sum_{i=0}^{d-1}\sum_{j=0,j\neq i}^{d-1} \mathrm{cov}(X_{\pi_i}, X_{\pi_j}) \right] 
    \end{align*}
    We, thus, continue our arithmetic manipulations 
    \begin{eqnarray}
        \mathbb{E}_{\pi \in \mathcal{S}}\left[\norm{\Bar{X} - \mu }^2\right] &=\frac{1}{d^2}\left(d\cdot\sigma^2- d(d-1)\frac{\sigma^2}{n-1}\right)
    = \frac{n-d}{d(n-1)}\sigma^2
    \end{eqnarray}
    to conclude with the promised equation in the statement of this proposition.
\end{proof}

\subsection{Variance of Stochastic Oracles}

We provide a proposition for bounding the variance of the stochastic oracles $F_i$. As mentioned in the main paper, our approach follows a recent line of work \citep{loizou2021stochastic,gorbunov2022stochastic,choudhury2023single, loizou2020stochastic, gower2019sgd, gower2021sgd, khaled2023unified} that uses the Lipschitz assumption to provide closed-form expressions for the upper bound on the variance.

\begin{proposition} 
\label{app: bound for const assumpt}
    If each $F_i$ is $L_i-$Lipschitz, then $\forall z \in \mathbb{R}^d$ the following holds
    $$\hspace{-0.1cm} \frac{1}{n}\sum_{i=1}^{n} \norm{ F_{i}(z) - F(z)}^2 \leq A \|z - z^*\|^2 +2 \sigma_*^2$$
where $A=\frac{2}{n} \sum\limits_{i=1}^{n} L_i^2 $ and $\sigma_*^2 =\frac{1}{n} \sum\limits_{i=1}^{n}\left\|F_i\left(z^*\right)\right\|^2$.
\end{proposition}
\begin{proof}
    \begin{eqnarray}  
    \frac{1}{n} \sum_{i=1}^{n}\left\|F_i(z)-F(z)\right\|^2 &\stackrel{}{=}& \frac{1}{n} \sum_{i=1}^{n} \left\|F_i(z) \right\|^2 -\frac{2}{n} \sum_{i=1}^{n}\langle F_i(z), F(z)\rangle + \frac{1}{n} \sum_{i=1}^{n} \left\|F(z)\right\|^2 \nonumber \\
    &=& \frac{1}{n} \sum_{i=1}^{n} \left\|F_i(z) \right\|^2 -2 \|F(z)\|^2 + \left\|F(z)\right\|^2\nonumber \\
    &\leq& \frac{1}{n} \sum_{i=1}^{n} \left\|F_i(z) \right\|^2 \nonumber \\ 
    &\stackrel{\eqref{ineq1}}{\leq}& \frac{2}{n} \sum_{i=1}^{n} \left\|F_i(z) - F_i(z^*)\right\|^2 + \frac{2}{n} \sum_{i=1}^{n} \left\|F_i(z^*)\right\|^2\nonumber \\ 
    &=& A \|z - z^*\|^2 + 2 \sigma_*^2 \nonumber \end{eqnarray}
    where $A=\frac{2}{n} \sum_{i=1}^{n} L_i^2$ and $\sigma_*^2 = \frac{1}{n} \sum\limits_{i=1}^{n} \|F_i(z^*)\|^2$.
\end{proof}

\subsection{Useful Lemmas}\label{sec:preparatory}
In this section, we provide some necessary preparatory lemmas that will be crucial for proving our main results. 
We start with a lemma that bounds the distance between the operator $F$ and $\frac{1}{d} \sum_{j=0}^{d-1} F_{\pi_j}$ with $d \in [n]$. \vspace{-0.2cm}
\begin{lemma}
\label{prop: bound prr epoch deviations}
Suppose that each $F_i, i \in [n]$ is $L_i-$Lipschitz. For any fixed $d \in [n]$ and $z \in \R^{d}$, the following inequality holds: 
\begin{eqnarray}
d^2\mathbb{E}_{\pi \in \mathcal{S}} \left[\norm{\frac{1}{d} \sum_{j=0}^{d-1} F_{\pi_j}(z)- F(z)}^2 \right] &\le& \frac{d(n-d)}{n-1} \left(A \|z - z^*\|^2 +2 \sigma_*^2\right) \quad \quad \label{bound2}   
\end{eqnarray} 
\vspace{-.2cm}
 where $\mathcal{S}$ is the set of all permutations of the set $[n]$ of length $d$, 
 $A =\frac{2}{n} \sum_{i=1}^{n} L_i^2$ and $\sigma_*^2 = \frac{1}{n} \sum\limits_{i=1}^{n} \|F_i(z^*)\|^2$.
\end{lemma}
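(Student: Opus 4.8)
The plan is to connect the left-hand side directly to Proposition~\ref{prop:random_reshuffling} by a clean change of variables. The key observation is that the expression $\frac{1}{d}\sum_{j=0}^{d-1} F_{\pi_j}(z)$ is exactly a sample average of the population $\{F_1(z),\dots,F_n(z)\}$ drawn without replacement, while $F(z)=\frac{1}{n}\sum_{i=1}^n F_i(z)$ is the corresponding population average. So first I would set $X_i \coloneqq F_i(z)$ for each fixed $z$, and identify $\mu = \frac{1}{n}\sum_{i=1}^n X_i = F(z)$ as the population mean and $\sigma^2 = \frac{1}{n}\sum_{i=1}^n \norm{X_i - \mu}^2 = \frac{1}{n}\sum_{i=1}^n \norm{F_i(z)-F(z)}^2$ as the population variance.

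With this identification in place, Proposition~\ref{prop:random_reshuffling} applies verbatim: the sampling-without-replacement identity \eqref{eq:sampling_wo_replacement} gives
\begin{equation*}
\mathbb{E}_{\pi \in \mathcal{S}}\left[\norm{\frac{1}{d}\sum_{j=0}^{d-1} F_{\pi_j}(z) - F(z)}^2\right] = \frac{n-d}{d(n-1)}\,\sigma^2 .
\end{equation*}
Multiplying both sides by $d^2$ turns the prefactor into $\frac{d(n-d)}{n-1}$, which matches the prefactor appearing on the right-hand side of \eqref{bound2}. At this point the only remaining task is to bound the population variance $\sigma^2$.

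For the variance bound I would invoke Proposition~\ref{prop: bound for constant assumpt}, which states precisely that under the $L_i$-Lipschitz assumption on each $F_i$,
\begin{equation*}
\sigma^2 = \frac{1}{n}\sum_{i=1}^n \norm{F_i(z)-F(z)}^2 \leq A\,\|z-z_*\|^2 + 2\sigma_*^2,
\end{equation*}
with $A=\frac{2}{n}\sum_{i=1}^n L_i^2$ and $\sigma_*^2=\frac{1}{n}\sum_{i=1}^n \norm{F_i(z_*)}^2$. Substituting this estimate for $\sigma^2$ into the multiplied identity yields exactly the claimed inequality \eqref{bound2}, and the proof is complete.

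I do not expect a genuine obstacle here, since the statement is essentially a repackaging of two results already established earlier in the excerpt. The only point requiring a moment's care is the bookkeeping of the factor $d^2$ versus the $\frac{1}{d}$ normalization inside the norm: one must verify that multiplying the exact identity $\frac{n-d}{d(n-1)}\sigma^2$ by $d^2$ produces the stated coefficient $\frac{d(n-d)}{n-1}$, and that the averaged (rather than summed) form of the variance in Proposition~\ref{prop: bound for constant assumpt} aligns with the $\sigma^2$ in Proposition~\ref{prop:random_reshuffling}. Both checks are routine, so the lemma follows almost immediately from chaining the two propositions.
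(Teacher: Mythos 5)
Your proposal is correct and follows exactly the paper's own argument: identify $X_i \coloneqq F_i(z)$ so that $\frac{1}{d}\sum_{j=0}^{d-1}F_{\pi_j}(z)$ is a without-replacement sample average, apply Proposition~\ref{prop:random_reshuffling} to get the exact identity with prefactor $\frac{n-d}{d(n-1)}$, bound the population variance via Proposition~\ref{prop: bound for constant assumpt}, and multiply by $d^2$. All the bookkeeping (the $d^2 \cdot \frac{n-d}{d(n-1)} = \frac{d(n-d)}{n-1}$ step and the averaged form of the variance) checks out, so nothing further is needed.
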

\begin{proof}
First, we substitute in Proposition \ref{prop:random_reshuffling} $X_i \leftarrow F_i(z_0^k), i\in[n]$ and fix an integer $d\in [n]$. Next, we draw a permutation with $d$ elements uniformly at random from the set of all permutations of $[n]$ with $d$ elements, i.e. $\pi \sim \mathcal{U}(\mathcal{S})$. In other words, let $X_{\pi_0}, \dotsc X_{\pi_{d-1}}$ be sampled uniformly without replacement from $\{X_1,\dotsc, X_{n}\}$. Then, we have that the quantities $\Bar{X}, \mu$ from Proposition \ref{prop:random_reshuffling} are equal to: 
\begin{eqnarray}
  \Bar{X} = \frac{1}{d}\sum_{j=0}^{d-1} F_{\pi_j}(z), \quad \mu = \frac{1}{n} \sum\limits_{j=1}^{n}F_j(z) =F(z) 
\end{eqnarray}
From Proposition \ref{prop:random_reshuffling}, thus, we get that:
\begin{eqnarray}
\mathbb{E}_{\pi \in \mathcal{S}} \left[\norm{\Bar{X} - \mu}^2\right] = \mathbb{E}_{\pi \in \mathcal{S}} \left[\norm{\frac{1}{d}\sum_{j=0}^{d-1} F_{\pi_j}(z) - F(z)}^2 \right]= \frac{n-d}{d(n-1)}\frac{1}{n} \sum_{j=1}^{n}\left\|F_j(z)-F(z)\right\|^2 \label{in_to_substitute_var}
\end{eqnarray}
Using Proposition \ref{app: bound for const assumpt}, we next bound the sum on the right hand-sight (RHS) as follows:
\begin{eqnarray}
d^2 \mathbb{E}_{\pi \in \mathcal{S}} \left[\norm{\frac{1}{d}\sum_{j=0}^{d-1} F_{\pi_j}(z) - F(z)}^2\right] &\leq& \frac{d(n-d)}{n-1} \left(A \|z - z^*\|^2 + 2\sigma_*^2 \right) \nonumber
\end{eqnarray}
 \end{proof}
 
We, next, provide a lemma bounding the average distance of the iterates inside the $k-$th epoch from the initial point $z_0^k$ in the epoch. 

\begin{lemma}\label{Lemma: squared-norms-epoch-iterates}
    Assume that each $F_i, i\in [n],$ is $L_i-$Lipschitz and the step size of \ref{eq:SEG-RR} satisfy $\gamma_1\leq \frac{1}{3\sqrt{2n(n-1)}L_{max}}$, $ \gamma_2 \leq \frac{1}{\sqrt{n (n-1)} L_{max}}$.
    The iterates of the \ref{eq:SEG-RR} algorithm satisfy the following bound 
    \begin{eqnarray}
        \Expep{\frac{1}{n} \sum\limits_{j=0}^{n-1} \norm{z_{j}^k - z_0^k}^2}&\stackrel{}{\le}& \left[10n^2L^2 + A (25+n)\right]\gamma_1^2 \norm{z_0^k - z^*}^2 + 2(n+25)\gamma_1^2\sigma_*^2 \label{EG-bound}
    \end{eqnarray}
\end{lemma}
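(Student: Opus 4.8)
The plan is to start from the telescoped form of the within-epoch displacement and then perform a three-way splitting that isolates a \emph{full-batch drift} term, a \emph{reshuffling-noise} term, and a \emph{within-epoch Lipschitz drift} term. First I would telescope the update rule \eqref{SEG_update_rule}: since $z_{l+1}^k - z_l^k = -\gamma_1 F_{\pi_l^k}(\bar z_l^k)$, summing over $l$ gives $z_j^k - z_0^k = -\gamma_1\sum_{l=0}^{j-1}F_{\pi_l^k}(\bar z_l^k)$. I would then add and subtract $F_{\pi_l^k}(z_0^k)$ and $F(z_0^k)$ inside the sum to obtain
\[
z_j^k - z_0^k = -\gamma_1\sum_{l=0}^{j-1}\big(F_{\pi_l^k}(\bar z_l^k) - F_{\pi_l^k}(z_0^k)\big) - \gamma_1\sum_{l=0}^{j-1}\big(F_{\pi_l^k}(z_0^k) - F(z_0^k)\big) - \gamma_1 j\, F(z_0^k),
\]
and apply $\|a+b+c\|^2 \le 3(\|a\|^2+\|b\|^2+\|c\|^2)$, followed by the conditional expectation $\Expep{\cdot}$ (which fixes $z_0^k$ and averages only over the permutation) and the average over $j \in \{0,\dots,n-1\}$.

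Next I would bound the three pieces separately. The last piece is deterministic given $z_0^k$: using Lipschitzness of $F$ and $F(z_*)=0$ gives $\|F(z_0^k)\|^2 \le L^2\|z_0^k - z_*\|^2$, and averaging $j^2$ over the epoch yields the $\mathcal O(n^2 L^2)\,\gamma_1^2\|z_0^k-z_*\|^2$ contribution. The middle piece is exactly the reshuffling-noise quantity controlled by Lemma \ref{prop: bound prr epoch deviations}: taking $\Expep{\cdot}$ gives $\le \frac{j(n-j)}{n-1}\big(A\|z_0^k-z_*\|^2 + 2\sigma_*^2\big)$, and since $\frac{1}{n}\sum_{j=0}^{n-1}\frac{j(n-j)}{n-1} = \frac{n+1}{6}$, this produces the terms linear in $n$. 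For the first piece I would use \eqref{ineq1} and $L_{max}$-Lipschitzness to dominate it by a multiple of $\frac1n\sum_l\|\bar z_l^k - z_0^k\|^2$, then expand $\bar z_l^k - z_0^k = (z_l^k - z_0^k) - \gamma_2 F_{\pi_l^k}(z_l^k)$ and control $\|F_{\pi_l^k}(z_l^k)\|^2 \le 2L_{max}^2\|z_l^k-z_0^k\|^2 + 2\|F_{\pi_l^k}(z_0^k)\|^2$. Using $\gamma_2^2 L_{max}^2 \le \frac{1}{n(n-1)}$, this gives a bound of the form $c_1\gamma_1^2 n(n-1)L_{max}^2\cdot\frac1n\sum_l\|z_l^k-z_0^k\|^2 + c_2\gamma_1^2\big(A\|z_0^k-z_*\|^2 + 2\sigma_*^2\big)$, where the permutation sum is handled by the identity $\sum_{l=0}^{n-1}\|F_{\pi_l^k}(z_0^k)\|^2 = \sum_{i=1}^n\|F_i(z_0^k)\|^2 \le n\big(A\|z_0^k-z_*\|^2 + 2\sigma_*^2\big)$ from Proposition \ref{prop: bound for constant assumpt}.

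The main obstacle is the self-referential nature of the first piece: it reproduces the very quantity $\Expep{\frac1n\sum_l\|z_l^k-z_0^k\|^2}$ that we are trying to bound. I would resolve this by invoking the step-size constraint $\gamma_1 \le \frac{1}{3\sqrt{2n(n-1)}L_{max}}$, which forces the self-coefficient $c_1\gamma_1^2 n(n-1)L_{max}^2$ to be at most $\frac{1}{2}$; I can then move it to the left-hand side and multiply through by the resulting constant factor. Collecting the surviving $\mathcal O(n^2L^2)$, $\mathcal O(n)A$, and constant-order contributions produces the stated coefficients $10n^2L^2 + A(25+n)$ on $\|z_0^k-z_*\|^2$ and $2(n+25)$ on $\sigma_*^2$. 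The only delicate bookkeeping is tracking the numeric constants (the ``$25$'' and ``$10$'') through the $\|a+b+c\|^2\le 3(\cdots)$ splits and the absorption factor, and verifying that the two step-size conditions are precisely what simultaneously guarantee $\gamma_2^2 L_{max}^2 \le \frac{1}{n(n-1)}$ and a self-coefficient below $1$.
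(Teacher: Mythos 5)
Your proposal is correct and follows essentially the same route as the paper's own proof: the identical telescoping of \eqref{SEG_update_rule}, the same three-way split into full-batch drift, reshuffling noise (bounded via Lemma \ref{prop: bound prr epoch deviations} together with $\frac{1}{n}\sum_{j=0}^{n-1}\frac{j(n-j)}{n-1}=\frac{n+1}{6}$), and within-epoch Lipschitz drift, handled with Proposition \ref{prop: bound for constant assumpt} and the expansion of $\bar z_l^k - z_0^k$. Your resolution of the self-referential term also mirrors the paper exactly: the constraint $\gamma_1 \le \frac{1}{3\sqrt{2n(n-1)}L_{max}}$ makes the self-coefficient $9n(n-1)\gamma_1^2 L_{max}^2 \le \frac{1}{2}$, so it is moved to the left-hand side and absorbed by a factor of $2$, after which the stated constants follow as in the paper.
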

\begin{proof}
Using the update rule of \ref{eq:SEG-RR} in \eqref{SEG_update_rule}, we have that:
\begin{eqnarray}
  z_i^{k} = z_{i-1}^k - \gamma_1 F_{\pi_{i-1}^k}(\Bar{z}_{i-1}^k) 
		= z_{0}^k - \gamma_1 \sum\limits_{j=0}^{i-1} ( F_{\pi_j^k}(\Bar{z}_{j}^k) -
			F_{\pi_j^k}(\hat{z}_{0}^k)\big) \nonumber 
\end{eqnarray} 
We, thus, have that:
\begin{eqnarray}
		\norm{z_i^k - z_0^k}^2 &\stackrel{}{=}& \gamma_1^2 i^2 \norm{\frac{1}{i} \sum\limits_{j=0}^{i-1} F_{\pi_j^k}(\Bar{z}_{j}^k)}^2 \nonumber \\ 
        & \stackrel{}{=}& \gamma_1^2 i^2 \norm{\frac{1}{i} \sum\limits_{j=0}^{i-1} \left[F_{\pi_j^k}(\Bar{z}_{j}^k) -F_{\pi_j^k} (z_0^k)+F_{\pi_j^k} (z_0^k) \right] - F(z_0^k) + F(z_0^k)}^2\nonumber\\ 
    &\stackrel{\eqref{ineq1}}{\leq}& 3\gamma_1^2 i \sum\limits_{j=0}^{i-1} \norm{F_{\pi_j^k}  (\Bar{z}_{j}^k)-F_{\pi_j^k} (z_0^k)}^2  + 3\gamma_1^2 i^2 \norm{\frac{1}{i}\sum\limits_{j=0}^{i-1} F_{\pi_j^k} (z_0^k)-F (z_0^k)}^2 + 3\gamma_1^2 i^2 \norm{F(z_0^k)}^2\nonumber
\end{eqnarray}
Using the Lipschitz property of $F_{\pi_j^k}(z)$, we continue as follows
\begin{eqnarray}
    \norm{z_i^k - z_0^k}^2 &\le& 3\gamma_1^2 L_{max}^2 i\sum\limits_{j=0}^{i-1} \norm{\Bar{z}_{j}^k-z_0^k}^2 + 3\gamma_1^2 i^2 \norm{\frac{1}{i}\sum\limits_{j=0}^{i-1} F_{\pi_j^k} (z_0^k)-F (z_0^k)}^2 + 3\gamma_1^2 i^2 \norm{F(z_0^k)}^2\nonumber 
\end{eqnarray}
Substituting the update rule \eqref{SEG_extrapolation_step} of the extrapolation step of \ref{eq:SEG-RR}, we get:
\begin{eqnarray}
    \norm{z_i^k - z_0^k}^2 
    \hspace{-.2cm}&\stackrel{\eqref{SEG_extrapolation_step}}{\leq}& \hspace{-.2cm}3\gamma_1^2 L_{max}^2 i \sum\limits_{j=0}^{i-1} \norm{z_{j}^k - \gamma_2 F_{\pi_j^k}(z_j^k)-z_0^k}^2 +3\gamma_1^2 i^2 \norm{\frac{1}{i}\sum\limits_{j=0}^{i-1} F_{\pi_j^k} (z_0^k)-F (z_0^k)}^2 \hspace{-.2cm}+ 3\gamma_1^2 i^2 \norm{F(z_0^k)}^2\nonumber\\
    \hspace{-.2cm}&\stackrel{\eqref{ineq1}}{\leq}& \hspace{-.2cm}6\gamma_1^2 L_{max}^2 i  \sum\limits_{j=0}^{i-1} \norm{z_{j}^k-z_0^k}^2 + 6\gamma_1^2 \gamma_2^2 L_{max}^2 i  \sum\limits_{j=0}^{i-1} \norm{F_{\pi_j^k}(z_{j}^k)}^2 \nonumber \\
    && +3\gamma_1^2 i^2 \norm{\frac{1}{i}\sum\limits_{j=0}^{i-1} F_{\pi_j^k} (z_0^k)-F (z_0^k)}^2 + 3\gamma_1^2 i^2 \norm{F(z_0^k)}^2\nonumber
\end{eqnarray}

Continuing with further algebraic manipulations, we obtain
        \begin{eqnarray}
        \norm{z_i^k - z_0^k}^2 
        \hspace{-.2cm}&\leq&\hspace{-.2cm} 6\gamma_1^2 L_{max}^2 i  \sum\limits_{j=0}^{i-1} \norm{z_{j}^k-z_0^k}^2 \ + 6\gamma_1^2 \gamma_2^2 L_{max}^2 i  \sum\limits_{j=0}^{i-1} \norm{F_{\pi_j^k}(z_{j}^k) -F_{\pi_j^k}(z_{0}^k) + F_{\pi_j^k}(z_{0}^k)}^2\nonumber \\ 
        \hspace{-.2cm}& \quad&\hspace{-.2cm} +3\gamma_1^2 i^2 \norm{\frac{1}{i}\sum\limits_{j=0}^{i-1} F_{\pi_j^k} (z_0^k)-F (z_0^k)}^2 + 3\gamma_1^2 i^2 \norm{F(z_0^k)}^2\nonumber\\
        \hspace{-.2cm}&\stackrel{\eqref{ineq1}}{\leq}&\hspace{-.2cm}
        6\gamma_1^2 L_{max}^2 i  \sum\limits_{j=0}^{i-1} \norm{z_{j}^k-z_0^k}^2+ 12\gamma_1^2 \gamma_2^2 L_{max}^2 i  \sum\limits_{j=0}^{i-1} \norm{F_{\pi_j^k}(z_{j}^k) -F_{\pi_j^k}(z_{0}^k)}^2\nonumber \\ 
        \hspace{-.2cm}&&\hspace{-.2cm} + 12\gamma_1^2 \gamma_2^2 L_{max}^2 i  \sum\limits_{j=0}^{i-1} \norm{F_{\pi_j^k}(z_{0}^k)}^2 +3\gamma_1^2 i^2 \norm{\frac{1}{i}\sum\limits_{j=0}^{i-1} F_{\pi_j^k} (z_0^k)-F (z_0^k)}^2 \hspace{-.2cm}+ 3\gamma_1^2 i^2 \norm{F(z_0^k)}^2\nonumber
        \end{eqnarray}

Using the Lipschitz property of $F_{\pi_j^k}(z)$ results to 
\begin{eqnarray}
    \norm{z_i^k - z_0^k}^2 \hspace{-.2cm}&\stackrel{}{\leq}&\hspace{-.2cm} 6\gamma_1^2 L_{max}^2 i  \sum\limits_{j=0}^{i-1} \norm{z_{j}^k-z_0^k}^2 + 12\gamma_1^2 \gamma_2^2 L_{max}^4 i  \sum\limits_{j=0}^{i-1} \norm{z_{j}^k - z_0^k}^2  \nonumber \\ 
    \hspace{-.2cm}&&\hspace{-.2cm} + 12\gamma_1^2 \gamma_2^2 L_{max}^2 i  \sum\limits_{j=0}^{i-1} \norm{F_{\pi_j^k}(z_{0}^k)}^2+3\gamma_1^2 i^2 \norm{\frac{1}{i}\sum\limits_{j=0}^{i-1} F_{\pi_j^k} (z_0^k)-F (z_0^k)}^2 + 3\gamma_1^2 i^2 \norm{F(z_0^k)}^2\nonumber \\
    \hspace{-.2cm}&\leq&\hspace{-.2cm} 6\gamma_1^2 L_{max}^2 i  \sum\limits_{j=0}^{n-1} \norm{z_{j}^k-z_0^k}^2 + 12\gamma_1^2 \gamma_2^2 L_{max}^4 i  \sum\limits_{j=0}^{n-1} \norm{z_{j}^k - z_0^k}^2  \nonumber \\ 
    \hspace{-.2cm}&&\hspace{-.2cm} + 12\gamma_1^2 \gamma_2^2 L_{max}^2 i  \sum\limits_{j=0}^{i-1} \norm{F_{\pi_j^k}(z_{0}^k)}^2+3\gamma_1^2 i^2 \norm{\frac{1}{i}\sum\limits_{j=0}^{i-1} F_{\pi_j^k} (z_0^k)-F (z_0^k)}^2 + 3\gamma_1^2 i^2 \norm{F(z_0^k)}^2\nonumber 
\end{eqnarray}

    Letting $G_k = \frac{1}{n} \sum\limits_{j=0}^{n-1} \norm{z_{j}^k - z_0^k}^2$ for brevity, we have that:
    \begin{eqnarray}
        \norm{z_i^k - z_0^k}^2 &\stackrel{}{\leq}& 6\gamma_1^2 L_{max}^2i (1+2\gamma_2^2 L_{max}^2)  nG_k +12 \gamma_1^2\gamma_2^2L_{max}^2 i \sum_{j=0}^{i-1} \norm{F_{\pi_j^k} (z_0^k) - F(z_0^k) + F(z_0^k)}^2 \nonumber\\ 
    && +3\gamma_1^2 i^2 \norm{\frac{1}{i}\sum\limits_{j=0}^{i-1} F_{\pi_j^k} (z_0^k)-F (z_0^k)}^2 +3\gamma_1^2 i^2 \norm{F(z_0^k)}^2\nonumber\\
    &\stackrel{\eqref{ineq1}}{\le}& 6\gamma_1^2 L_{max}^2i (1+2\gamma_2^2 L_{max}^2)nG_k +24 \gamma_1^2\gamma_2^2L_{max}^2 i \sum_{j=0}^{n-1} \norm{F_{\pi_j^k} (z_0^k)- F(z_0^k)}^2\nonumber \\ 
    && + 3\gamma_1^2 i^2 \norm{\frac{1}{i}\sum\limits_{j=0}^{i-1} F_{\pi_j^k} (z_0^k)-F (z_0^k)}^2 + 3\gamma_1^2 (i^2+8 \gamma_2^2 i^2L_{max}^2)\norm{F(z_0^k)}^2\nonumber
    \end{eqnarray}
Taking expectation condition on the filtration $\mathcal{F}_k$ we get:
	\begin{eqnarray}
		\Expep{\norm{z_i^k - z_0^k}^2 } 
        & \stackrel{}{\leq}& 6\gamma_1^2 L_{max}^2i (1+2\gamma_2^2 L_{max}^2)  n \Expep{G_k} + 3\gamma_1^2 (i^2+8 \gamma_2^2 i^2L_{max}^2)\norm{F(z_0^k)}^2 \nonumber \\ 
        && +24 \gamma_1^2\gamma_2^2L_{max}^2 i \Expep{\sum_{j=0}^{i-1} \norm{F_{\pi_j^k} (z_0^k)- F(z_0^k)}^2}\nonumber \\ 
        && + 3\gamma_1^2 i^2 \Expep{\norm{\frac{1}{i}\sum\limits_{j=0}^{i-1} F_{\pi_j^k} (z_0^k)-F (z_0^k)}^2} \label{eq:expected zgap22}
	\end{eqnarray}
        We next bound the last two terms of \eqref{eq:expected zgap22}. 
        Using Proposition \eqref{app: bound for const assumpt}, we have that:
        \begin{eqnarray}
        \frac{1}{n} \sum_{j=0}^{n-1} \norm{F_{\pi_j^k} (z_0^k)- F(z_0^k)}^2 \leq A\norm{z_0^k - z^*}^2 + 2\sigma_*^2 \nonumber \\ 
        \iff \sum_{j=0}^{n-1} \norm{F_{\pi_j^k} (z_0^k)- F(z_0^k)}^2 \leq An \norm{z_0^k - z^*}^2 + 2n \sigma_*^2 \nonumber 
        \end{eqnarray}
        Taking conditional expectation on both sides of the inequality, results to
        \begin{eqnarray}
            \Expep{\sum_{j=0}^{n-1} \norm{F_{\pi_j^k} (z_0^k)- F(z_0^k)}^2} \leq A n \norm{z_0^k - z^*}^2 + 2n \sigma_*^2\label{bound_term_3}
        \end{eqnarray}
        Substituting inequality \eqref{bound_term_3} in \eqref{eq:expected zgap22}, we get:
	\begin{eqnarray}
            \Expep{ \norm{z_i^k - z_0^k}^2 } &\stackrel{\eqref{bound_term_3}}{\leq}& 6\gamma_1^2 L_{max}^2i (1+2\gamma_2^2 L_{max}^2)  n \Expep{G_k} + 3\gamma_1^2 (i^2+8 \gamma_2^2 i^2L_{max}^2)\norm{F(z_0^k)}^2 \nonumber \\ 
        && +24 \gamma_1^2\gamma_2^2L_{max}^2 i \left(An \norm{z_0^k - z^*}^2 + 2n \sigma_*^2\right)\nonumber \\ 
        && + 3\gamma_1^2 i^2 \Expep{\norm{\frac{1}{i}\sum\limits_{j=0}^{i-1} F_{\pi_j^k} (z_0^k)-F (z_0^k)}^2} \nonumber \\
        &=& 6\gamma_1^2 L_{max}^2i (1+2\gamma_2^2 L_{max}^2)  n \Expep{G_k} + 3\gamma_1^2 (i^2+8 \gamma_2^2 i^2L_{max}^2)\norm{F(z_0^k)}^2 \nonumber \\ 
        && +24 \gamma_1^2\gamma_2^2L_{max}^2 A ni \norm{z_0^k - z^*}^2 + 48 \gamma_1^2\gamma_2^2L_{max}^2ni \sigma_*^2\nonumber \\ 
        && + 3\gamma_1^2 i^2 \Expep{\norm{\frac{1}{i}\sum\limits_{j=0}^{i-1} F_{\pi_j^k} (z_0^k)-F (z_0^k)}^2}\label{eq:4} 
        \end{eqnarray}
        
        Using Lemma \ref{prop: bound prr epoch deviations} with $d \leftarrow i$, we can bound the last term in \eqref{eq:4} and get:
        \begin{eqnarray}
    		\Expep{ \norm{z_i^k - z_0^k}^2 } &\stackrel{\eqref{prop: bound prr epoch deviations}}{\le}&
        6\gamma_1^2 L_{max}^2i (1+2\gamma_2^2 L_{max}^2) n\Expep{G_k} +3\gamma_1^2 \left(1+8 \gamma_2^2 L_{max}^2\right) i^2 \norm{F(z_0^k)}^2 \nonumber \\ 
        && + 3 \gamma_1^2A \left[\frac{i(n-i)}{n-1} + 8\gamma_2^2ni L_{max}^2\right] \norm{z_0^k - z^*}^2 \nonumber \\
        && +6\gamma_1^2\sigma_*^2\left[\frac{i(n-i)}{n-1} +8 \gamma_2^2ni L_{max}^2\right] \label{gap iter of w}
    \end{eqnarray}
    Summing over $0\le i \le n-1$ and multiplying with $\frac{1}{n}$, we have that: 
	\begin{eqnarray}
		\Expep{G_k }= \frac{1}{n} \sum_{i=0}^{n-1} \Expep{ \norm{z_{i}^k - z_0^k}^2} &\le& 3\gamma_1^2 L_{max}^2 (1+2\gamma_2^2 L_{max}^2) n(n-1) \Expep{ G_k} +\gamma_1^2 D \norm{F(z_0^k)}^2 \nonumber \\ 
        && + \gamma_1^2A \left[\frac{n+1}{2} + 12 \gamma_2^2 L_{max}^2n(n-1)\right] \norm{z_0^k - z^*}^2 \nonumber \\
        && + \gamma_1^2\sigma_*^2\left[(n+1) +24 \gamma_2^2L_{max}^2n(n-1)\right] \label{inbef}
	\end{eqnarray}
	where we used the facts \begin{eqnarray}
	\frac{1}{n} \sum_{i=0}^{n-1} i = \frac{n-1}{2}, \quad \frac{1}{n}\sum_{i=0}^{n-1} i^2  = \frac{(n-1)(2n-1)}{6},\quad \frac{1}{n}\sum_{i=0}^{n-1} \frac{i(n-i)}{n-1} = \frac{n+1}{6}    \nonumber
	\end{eqnarray}
        and let, also, $D = \left[\frac{(1+8\gamma_2^2 L_{max}^2)(n-1)(2n-1)}{2}\right]$ for brevity.\\
        \\ 
        Rearranging the terms in inequality \eqref{inbef}, we obtain:
        \begin{eqnarray}
           [1 - 3n(n-1) \gamma_1^2(1+2\gamma_2^2 L_{max}^2) L_{max}^2]\Expe{ G_k} \hspace{-.2cm}&\leq&\hspace{-.2cm} \gamma_1^2 D \norm{F(z_0^k)}^2  + \gamma_1^2A \left[\frac{n+1}{2} + 12 \gamma_2^2 L_{max}^2n(n-1)\right] \norm{z_0^k - z^*}^2 \nonumber \\
        && +\gamma_1^2\sigma_*^2\left[(n+1) +24 \gamma_2^2L_{max}^2n(n-1)\right] \nonumber 
        \end{eqnarray}
		Letting $\gamma_2 \leq \frac{1}{L_{max}}$, we get:
  \begin{eqnarray}
    [1 - 9n(n-1) \gamma_1^2 L_{max}^2]\Expe{ G_k} &\leq& \gamma_1^2 D \norm{F(z_0^k)}^2 \nonumber \\ 
        && + \gamma_1^2A \left[\frac{n+1}{2} + 12 \gamma_2^2 L_{max}^2n(n-1)\right] \norm{z_0^k - z^*}^2 \nonumber \\
        && +\gamma_1^2\sigma_*^2\left[(n+1) +24 \gamma_2^2L_{max}^2n(n-1)\right] \nonumber   
  \end{eqnarray}
  
  By letting $ D_1 = [1 - 9n(n-1)\gamma_1^2 L_{max}^2]$ and selecting the stepsize $\gamma_1 < \frac{1}{3\sqrt{n(n-1)}L_{max}}$, we have that  $D_1 > 0$ and thus we get that:
  \begin{eqnarray}
        \Expep{ G_k}&\stackrel{}{\le}& \frac{D\gamma_1^2}{D_1} \norm{F(z_0^k)}^2 \nonumber \\ 
        && + \frac{A\gamma_1^2}{D_1} \left[\frac{n+1}{2} + 12 \gamma_2^2 L_{max}^2n(n-1)\right] \norm{z_0^k - z^*}^2 \nonumber \\
        && +\frac{\gamma_1^2}{D_1} \left[(n+1) +24 \gamma_2^2L_{max}^2n(n-1)\right] \sigma_*^2 \label{bound_on_Gk}
	\end{eqnarray}
 Lastly, substituting the definition of $G_k$ we get that:
 \begin{eqnarray}
 \Expep{\frac{1}{n} \sum\limits_{j=0}^{n-1} \norm{z_{j}^k - z_0^k}^2}&\leq&\frac{D\gamma_1^2}{D_1} \norm{F(z_0^k)}^2 + \frac{A\gamma_1^2}{D_1} \left[\frac{n+1}{2} + 12 \gamma_2^2 L_{max}^2n(n-1)\right] \norm{z_0^k - z^*}^2 \nonumber \\
        && +\frac{\gamma_1^2}{D_1} \left[(n+1) +24 \gamma_2^2L_{max}^2n(n-1)\right] \sigma_*^2 \label{bound_on_G_k2}
 \end{eqnarray}
 Selecting $\gamma_1 \leq \frac{1}{3L_{max}\sqrt{2n(n-1)}}$ we have that:
\begin{eqnarray}
  D_1 = 1 - 9n(n-1)L_{max}^2 \gamma_1^2 \geq \frac{1}{2} \iff \frac{1}{2D_1} \leq 1 \label{bound-for-D_1-2nd-term}
\end{eqnarray}
We, also, have that for $\gamma_2 \leq \frac{1}{\sqrt{n (n-1)} L_{max}}$ we can upper bound $D$ as follows 
\begin{eqnarray}
  D = \frac{(1+8\gamma_2^2 L_{max}^2)(n-1)(2n-1)}{2} \stackrel{}{\leq} \frac{5(n-1)(2n-1)}{2} \leq 5 n^2 \label{bound-for-D-new-2nd-t}
\end{eqnarray}
Substituting the bounds \eqref{bound-for-D_1-2nd-term}, \eqref{bound-for-D-new-2nd-t} to \eqref{bound_on_G_k2}, we obtain
\begin{eqnarray}
    \Expep{\frac{1}{n} \sum\limits_{j=0}^{n-1} \norm{z_{j}^k - z_0^k}^2}&\stackrel{\eqref{bound-for-D_1-2nd-term}, \eqref{bound-for-D-new-2nd-t}}{\leq}& 10n^2 \gamma_1^2 \norm{F(z_0^k)}^2  + 2A\gamma_1^2 \left[\frac{n+1}{2} + 12 \gamma_2^2 L_{max}^2n(n-1)\right] \norm{z_0^k - z^*}^2 \nonumber \\
        && + 2\gamma_1^2 \left[(n+1) +24 \gamma_2^2L_{max}^2n(n-1)\right] \sigma_*^2 \label{Lemma: per epoch iterates without gamma2 constr} \\
        &\stackrel{\gamma_2 \leq \frac{1}{\sqrt{n(n-1)}L_{max}}}{\leq}& 10n^2\gamma_1^2 \norm{F(z_0^k)}^2 + A\gamma_1^2 (25+n) \norm{z_0^k - z^*}^2 + 2\gamma_1^2 (n+25)\sigma_*^2 \nonumber \\
        &\stackrel{\eqref{DefLipschitz}}{\leq}& \left[10n^2L^2 + A (25+n)\right]\gamma_1^2 \norm{z_0^k - z^*}^2 + 2(n+25)\gamma_1^2\sigma_*^2\nonumber
\end{eqnarray}
\end{proof}

In the next Lemma, we bound a term that appears in the proofs of our theorems.

\label{app lemma-for-SC-SC}
\begin{lemma} \label{Lemma: Strongly-monotone-case1}
Assume each $F_i, i \in [n],$ is $L_i-$Lipschitz. If the extrapolation stepsize of \ref{eq:SEG-RR} satisfies $\gamma_2 \leq \frac{1}{L_{max}}$, 
then the following bound holds:
\begin{eqnarray}
\Expep{\sum_{i=0}^{n-1}\left\|F_{\pi_i^k}(\Bar{z}_{i}^k) - F_{\pi_i^k}(\hat{z}_{0}^k)\right\|^2} &\leq& 6L_{max}^2 \Expep {\sum_{i=0}^{n-1} \left\|z^k_{i}-z^k_0\right\|^2}\nonumber \\ && +3L_{max}^2 \gamma_2^2 \Expep{\sum_{i=0}^{n-1} \left\|F_{\pi_i^k}(z^k_{0}) -F(z^k_{0}) \right\|^2}  \nonumber
\end{eqnarray}
\end{lemma}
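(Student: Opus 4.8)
The plan is to bound each summand $\|F_{\pi_i^k}(\Bar{z}_{i}^k) - F_{\pi_i^k}(\hat{z}_{0}^k)\|^2$ pointwise (before any expectation is taken), and only sum over $i$ and apply $\Expep{\cdot}$ at the very end; since the claimed bound is itself stated in terms of conditional expectations of sums, nothing about the randomness of the permutation needs to be exploited directly here—the conditional expectation is just carried along linearly. First I would invoke the Lipschitz property of $F_{\pi_i^k}$ (using $L_{\pi_i^k}\le L_{max}$) to pass from operator differences to point differences, namely $\|F_{\pi_i^k}(\Bar{z}_{i}^k) - F_{\pi_i^k}(\hat{z}_{0}^k)\|^2 \le L_{max}^2\|\Bar{z}_{i}^k - \hat{z}_{0}^k\|^2$, reducing the problem to controlling $\|\Bar{z}_{i}^k - \hat{z}_{0}^k\|^2$.

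The core of the argument is the decomposition of $\Bar{z}_{i}^k - \hat{z}_{0}^k$. Substituting the definitions \eqref{SEG_extrapolation_step} and \eqref{hat_z} gives
\begin{eqnarray}
\Bar{z}_{i}^k - \hat{z}_{0}^k = \left(z_i^k - z_0^k\right) - \gamma_2\left(F_{\pi_i^k}(z_i^k) - F(z_0^k)\right). \nonumber
\end{eqnarray}
The key manipulation is to insert $\pm\, \gamma_2 F_{\pi_i^k}(z_0^k)$ so that the operator difference splits into a part comparing $F_{\pi_i^k}$ at the two points $z_i^k$ and $z_0^k$ (which the Lipschitz constant can absorb into a $\|z_i^k-z_0^k\|^2$ term) and a part $F_{\pi_i^k}(z_0^k)-F(z_0^k)$ that measures the deviation of the stochastic oracle from $F$ at the fixed epoch anchor $z_0^k$. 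This yields three summands, and applying inequality \eqref{ineq1} with three terms produces the factor of $3$ in front of each. Applying the Lipschitz bound once more to the middle term turns $\gamma_2^2\|F_{\pi_i^k}(z_i^k)-F_{\pi_i^k}(z_0^k)\|^2$ into $\gamma_2^2 L_{max}^2\|z_i^k-z_0^k\|^2$.

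At this stage I would have the pointwise bound $\|\Bar{z}_{i}^k - \hat{z}_{0}^k\|^2 \le 3(1+\gamma_2^2 L_{max}^2)\|z_i^k-z_0^k\|^2 + 3\gamma_2^2\|F_{\pi_i^k}(z_0^k)-F(z_0^k)\|^2$. The stepsize hypothesis $\gamma_2\le \tfrac{1}{L_{max}}$ then gives $\gamma_2^2 L_{max}^2\le 1$, so $3(1+\gamma_2^2 L_{max}^2)\le 6$, which is precisely what matches the constants $6$ and $3$ in the statement after multiplying through by the outer $L_{max}^2$. Finally I would sum over $0\le i\le n-1$ and take $\Expep{\cdot}$ of both sides, using linearity of expectation, to obtain the claimed inequality.

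\textbf{Main obstacle.} There is no serious analytical obstacle—the whole argument is an application of the two standard inequalities \eqref{ineq1} and Lipschitzness together with the stepsize restriction. The only point requiring care is choosing the right intermediate term to add and subtract (namely $F_{\pi_i^k}(z_0^k)$ rather than $F(z_i^k)$), so that the two surviving terms align exactly with the $\|z_i^k-z_0^k\|^2$ and $\|F_{\pi_i^k}(z_0^k)-F(z_0^k)\|^2$ quantities appearing on the right-hand side, and tracking the constants so that the $\gamma_2^2 L_{max}^2\le 1$ bound collapses the coefficient to exactly $6$.
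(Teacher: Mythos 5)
Your proposal is correct and follows essentially the same route as the paper's proof: the same Lipschitz reduction to $\|\Bar{z}_{i}^k - \hat{z}_{0}^k\|^2$, the same insertion of $\pm\gamma_2 F_{\pi_i^k}(z_0^k)$ followed by inequality \eqref{ineq1} with three terms, a second Lipschitz application on the middle term, and the stepsize condition $\gamma_2 \leq \frac{1}{L_{max}}$ collapsing $3(1+\gamma_2^2 L_{max}^2)$ to $6$. The only (immaterial) difference is that you work pointwise in $i$ and sum at the end, while the paper carries the sum throughout.
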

\begin{proof}
Using the Lipschitz property of $F_i, \forall i \in [n], k \geq 0$, we have that
\begin{eqnarray}
	\sum_{i=0}^{n-1}\left\|F_{\pi_i^k}(\Bar{z}_{i}^k) - F_{\pi_i^k}(\hat{z}_{0}^k)\right\|^2 &\leq\,& L_{max}^2 \sum_{i=0}^{n-1} \left\|\Bar{z}^k_{i}-\hat{z}^k_0\right\|^2 \nonumber\\
 &\stackrel{\eqref{SEG_extrapolation_step}}{=}& L_{max}^2 \sum_{i=0}^{n-1}\left\|z^k_{i}- \gamma_2 F_{\pi_i^k} (z_{i}^k) -z^k_0 +\gamma_2 F(z_0^k) +\gamma_2 F_{\pi_i^k}(z_0^k)  - \gamma_2 F_{\pi_i^k}(z_0^k) \right\|^2\nonumber\\
 &\stackrel{\eqref{ineq1}}{\leq}& 3 L_{max}^2\sum_{i=0}^{n-1} \left\|z^k_{i}-z^k_0\right\|^2 + 3L_{max}^2 \gamma_2^2 \sum_{i=0}^{n-1} \left\|F_{\pi_i^k}(z^k_{i}) -F_{\pi_i^k}(z^k_{0}) \right\|^2  \nonumber \\&& +3L_{max}^2 \gamma_2^2 \sum_{i=0}^{n-1} \left\|F_{\pi_i^k}(z^k_{0}) -F(z^k_{0}) \right\|^2 \nonumber\\
 &\stackrel{\eqref{DefLipschitz}}{\leq}& 3L_{max}^2 (1+\gamma_2^2L_{max}^2) \sum_{i=0}^{n-1} \left\|z^k_{i}-z^k_0\right\|^2 +3L_{max}^2 \gamma_2^2 \sum_{i=0}^{n-1} \left\|F_{\pi_i^k}(z^k_{0}) -F(z^k_{0}) \right\|^2  \nonumber 
\end{eqnarray}
Taking expectation condition on the filtration $\mathcal{F}_k$ and using Proposition \ref{app: bound for const assumpt}, we have
\begin{eqnarray}
\Expep{\sum_{i=0}^{n-1}\left\|F_{\pi_i^k}(\Bar{z}_{i}^k) - F_{\pi_i^k}(\hat{z}_{0}^k)\right\|^2 } &\leq\,& 3L_{max}^2 (1+\gamma_2^2L_{max}^2)\Expep {\sum_{i=0}^{n-1} \left\|z^k_{i}-z^k_0\right\|^2}\nonumber \\ && +3L_{max}^2 \gamma_2^2 \Expep{\sum_{i=0}^{n-1} \left\|F_{\pi_i^k}(z^k_{0}) -F(z^k_{0}) \right\|^2}  \nonumber
\end{eqnarray}
For $\gamma_2 \leq \frac{1}{L_{max}}$, we get:
\begin{eqnarray}
\Expep{\sum_{i=0}^{n-1}\left\|F_{\pi_i^k}(\Bar{z}_{i}^k) - F_{\pi_i^k}(\hat{z}_{0}^k)\right\|^2 }\hspace{-0.25cm} &\leq& \hspace{-0.25cm} 6L_{max}^2 \Expep {\sum_{i=0}^{n-1} \left\|z^k_{i}-z^k_0\right\|^2}\hspace{-0.05cm}+\hspace{-0.05cm}3L_{max}^2 \gamma_2^2 \Expep{\sum_{i=0}^{n-1} \left\|F_{\pi_i^k}(z^k_{0}) -F(z^k_{0}) \right\|^2}  \nonumber
\end{eqnarray}
\end{proof}
\newpage

\section{Proofs for SEG-RR}
\label{app theorems for SEG-RR}
\subsection{Proofs for Strongly Monotone Case}
\subsubsection{Lemma for Iterates in Strongly Monotone Case}
\vspace{+0.1cm}
\begin{lemma} \label{Lemma: SC-SC2}
For SEG-RR if $F$ is $\mu-$strongly monotone and Assumption \ref{DefLipschitz} holds, we have the following bound:
\begin{eqnarray}
   \left\|z_{0}^k - z^* - \gamma_1 nF(\hat{z}_0^k)\right\|^2 &\stackrel{}{\leq}& \left(1 - \frac{1}{2} \gamma_1 n \mu\right)^2 \|z_{0}^k - z^*\|^2 + U \|z_{0}^k - z^*\|^2\nonumber
\end{eqnarray}
    where $\hat{z}_0^k = z_0^k - \gamma_2 F(z_0^k)$ and $U = \left\{ \gamma_1^2 n^2 (2L^2 - \frac{\mu^2}{4}) +2\gamma_1 \gamma_2 nL^2 \left[-1  +\gamma_2 (\gamma_1 nL^2 + \gamma_2L^2 +\mu) \right]\right\}$.
\end{lemma}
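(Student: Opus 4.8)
The plan is to expand the square and separate the strongly monotone ``contraction'' contribution from the Lipschitz ``correction'' terms that get collected into $U$. Writing $\hat z_0^k = z_0^k - \gamma_2 F(z_0^k)$ and using $F(z_*)=0$, I first expand
\[
\|z_0^k - z_* - \gamma_1 n F(\hat z_0^k)\|^2 = \|z_0^k - z_*\|^2 - 2\gamma_1 n\langle F(\hat z_0^k), z_0^k - z_*\rangle + \gamma_1^2 n^2\|F(\hat z_0^k)\|^2 .
\]
Then, since $z_0^k - z_* = (\hat z_0^k - z_*) + \gamma_2 F(z_0^k)$, I split the inner product as $\langle F(\hat z_0^k), z_0^k - z_*\rangle = \langle F(\hat z_0^k), \hat z_0^k - z_*\rangle + \gamma_2\langle F(\hat z_0^k), F(z_0^k)\rangle$, which produces a monotone term, a cross term, and the quadratic $\gamma_1^2 n^2\|F(\hat z_0^k)\|^2$.

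For the monotone term I would invoke $\mu$-strong monotonicity (Definition \ref{DefSM}) at $\hat z_0^k$ to get $\langle F(\hat z_0^k), \hat z_0^k - z_*\rangle \ge \mu\|\hat z_0^k - z_*\|^2$. Because this enters with a negative coefficient, I then need a \emph{lower} bound on $\|\hat z_0^k - z_*\|^2$; here inequality \eqref{ineqa_b} applied to $\hat z_0^k - z_* = (z_0^k - z_*) - \gamma_2 F(z_0^k)$, together with the Lipschitz bound \eqref{DefLipschitz} on $\|F(z_0^k)\|$, yields $\|\hat z_0^k - z_*\|^2 \ge (\tfrac12 - \gamma_2^2 L^2)\|z_0^k - z_*\|^2$. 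The factor $\tfrac12$ cancels the $2$ from the expansion, so this term contributes a full $-\gamma_1 n\mu\|z_0^k - z_*\|^2$ of contraction, plus a $+2\gamma_1\gamma_2^2 n\mu L^2\|z_0^k - z_*\|^2$ correction that lands exactly in $U$.

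The remaining work is to bound the cross term and the quadratic by the Lipschitz property and fold them into $U$. I expect the quadratic $\gamma_1^2 n^2\|F(\hat z_0^k)\|^2$ to yield the $2\gamma_1^2 n^2 L^2$ and $2\gamma_1^2\gamma_2^2 n^2 L^4$ pieces after writing $F(\hat z_0^k) = F(z_0^k) + (F(\hat z_0^k) - F(z_0^k))$ and applying \eqref{ineq1} with two summands, using $\|\hat z_0^k - z_0^k\| = \gamma_2\|F(z_0^k)\|$ and \eqref{DefLipschitz}. The crucial negative term $-2\gamma_1\gamma_2 n L^2$ comes from the cross term: rewriting $\gamma_2\langle F(\hat z_0^k), F(z_0^k)\rangle$ via the polarization identity \eqref{ineq inner product} extracts a $-\gamma_1 n\gamma_2\|F(\hat z_0^k)\|^2$ that combines with the quadratic into a coefficient $\gamma_1 n(\gamma_1 n - \gamma_2)\|F(\hat z_0^k)\|^2$; since $\gamma_2 = 2\gamma_1 < \gamma_1 n$ this coefficient is positive, so upper-bounding $\|F(\hat z_0^k)\|^2$ is legitimate and the $-\gamma_2$ is what manufactures the negative $L^2$ contribution. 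Finally I would regroup $1 - \gamma_1 n\mu$ as $(1-\tfrac12\gamma_1 n\mu)^2 - \tfrac14\gamma_1^2 n^2\mu^2$, so the displayed contraction factor appears and the compensating $-\tfrac14\gamma_1^2 n^2\mu^2$ is absorbed into the $-\tfrac{\mu^2}{4}$ piece of $U$. The main obstacle is purely the bookkeeping: maintaining consistent signs, deciding which negative terms (such as $-\gamma_1 n\gamma_2\|F(z_0^k)\|^2$) to discard, and choosing the polarization and Young splits so that all cross-terms assemble into precisely the factored form $2\gamma_1\gamma_2 nL^2[-1 + \gamma_2(\gamma_1 nL^2 + \gamma_2 L^2 + \mu)]$ rather than a looser bound.
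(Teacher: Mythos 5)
Your proposal matches the paper's proof essentially step for step: the same expansion of the square, the same split $z_0^k - z_* = (\hat z_0^k - z_*) + \gamma_2 F(z_0^k)$ with strong monotonicity applied at the extrapolated point $\hat z_0^k$, inequality \eqref{ineqa_b} to lower-bound $\|\hat z_0^k - z_*\|^2$, the polarization identity \eqref{ineq inner product} on the cross term $\gamma_1\gamma_2 n\langle F(z_0^k), F(\hat z_0^k)\rangle$, Lipschitzness to convert everything to $\|z_0^k-z_*\|^2$, and the regrouping $1-\gamma_1 n\mu = \left(1-\tfrac12\gamma_1 n\mu\right)^2 - \tfrac14\gamma_1^2 n^2\mu^2$ feeding the $-\tfrac{\mu^2}{4}$ piece of $U$. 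The only (minor) deviation is bookkeeping order: you merge the $-\gamma_1\gamma_2 n\|F(\hat z_0^k)\|^2$ from polarization into the unsplit quadratic and then upper-bound $\|F(\hat z_0^k)\|^2$, which requires the sign condition $\gamma_1 n \geq \gamma_2$ that you flag (and which holds in the regime where the lemma is invoked), whereas the paper splits $\gamma_1^2 n^2\|F(\hat z_0^k)\|^2$ via \eqref{ineq1} first and carries the $\|F(z_0^k)\|^2$ terms to the end, arriving at the same $U$.
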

\begin{proof}
We have that:
\begin{eqnarray}
			\left\|z_{0}^k - z^* - \gamma_1 n F(\hat{z}_0^k)\right\|^2 
			&=& \|z_{0}^k - z^*\|^2 - 2 \gamma_1 n\langle z_{0}^k - z^*, F(\hat{z}_0^k)\rangle + \gamma_1^2 n^2\| F(\hat{z}_0^k)\|^2 \nonumber\\
			&\stackrel{}{=}& \|z_{0}^k - z^*\|^2 
 - 2 \gamma_1 n\langle z_{0}^k - \gamma_2 F(z_0^k) - z^*, F(\hat{z}_0^k)\rangle \nonumber \\ && + \gamma_1^2 n^2\| F(\hat{z}_0^k)\|^2  
    - 2 \gamma_1\gamma_2  n\langle F(z_0^k), F(\hat{z}_0^k) \rangle\nonumber \\
    &\stackrel{\eqref{hat_z}}{=}& \|z_{0}^k - z^*\|^2 
 - 2 \gamma_1 n\langle\hat{z}_{0}^k - z^*, F(\hat{z}_0^k)\rangle + \gamma_1^2 n^2\| F(\hat{z}_0^k)\|^2 \nonumber \\ && 
    - 2 \gamma_1\gamma_2  n\langle F(z_0^k), F(\hat{z}_0^k) \rangle\nonumber \\
    &\stackrel{\eqref{DefSM}}{\leq}& \|z_{0}^k - z^*\|^2 
 -2\gamma_1 n\mu \|\hat{z}_{0}^k - z^*\|^2 + \gamma_1^2 n^2\| F(\hat{z}_0^k)\|^2 \nonumber \\ && 
    - 2 \gamma_1\gamma_2  n\langle F(z_0^k), F(\hat{z}_0^k) \rangle\nonumber \\
    &\stackrel{}{=}& \|z_{0}^k - z^*\|^2 
 -2\gamma_1 n\mu \|\hat{z}_{0}^k - z^*\|^2 + \gamma_1^2 n^2\| F(\hat{z}_0^k) - F(z^k_0) + F(z^k_0) \|^2 \nonumber \\ && 
    - 2 \gamma_1\gamma_2  n\langle F(z_0^k), F(\hat{z}_0^k) \rangle\nonumber \\
    &\stackrel{\eqref{ineq1}}{\leq}& \|z_{0}^k - z^*\|^2 
 -2\gamma_1 n\mu \|\hat{z}_{0}^k - z^*\|^2 + 2\gamma_1^2 n^2\| F(\hat{z}_0^k) - F(z^k_0)\|^2 \nonumber \\ &&  + 2\gamma_1^2 n^2\|F(z^k_0) \|^2 - 2 \gamma_1\gamma_2  n\langle F(z_0^k), F(\hat{z}_0^k) \rangle\nonumber 
 \end{eqnarray}
 Using the Lipschitz property of $F$, we get 
 \begin{eqnarray}
    \left\|z_{0}^k - z^* - \gamma_1 n F(\hat{z}_0^k)\right\|^2  &\stackrel{\eqref{DefLipschitz}}{\leq}& \|z_{0}^k - z^*\|^2 
 -2\gamma_1 n \mu \| \hat{z}^k_0 -z^*\|^2 + 2 \gamma_1^2 n^2 L^2 \|\hat{z}_{0}^k - z_{0}^k\|^2\nonumber \\ && 
     + 2\gamma_1^2 n^2\|F(z^k_0) \|^2  - 2 \gamma_1\gamma_2  n\langle F(z_0^k), F(\hat{z}_0^k) \rangle\nonumber \\ 
    &\stackrel{\eqref{ineq inner product}}{\leq}& \|z_{0}^k - z^*\|^2 -2\gamma_1 n \mu \| \hat{z}^k_0 -z^*\|^2 + 2 \gamma_1^2 n^2 L^2 \| \hat{z}^k_0 -z^k_0\|^2 \nonumber \\ && 
 + 2\gamma_1 n (\gamma_1 n - \gamma_2) \|F(z^k_0) \|^2  +  2 \gamma_1\gamma_2  n \|F(z_0^k) - F(\hat{z}_0^k)\|^2 \nonumber \\
&\stackrel{\eqref{DefLipschitz}}{\leq}& \|z_{0}^k - z^*\|^2 -2\gamma_1 n \mu \| \hat{z}^k_0 -z^*\|^2 + 2 \gamma_1 n L^2 (\gamma_1 n + \gamma_2) \| \hat{z}^k_0 -z^k_0\|^2 \nonumber \\ && 
 + 2\gamma_1 n (\gamma_1 n - \gamma_2) \|F(z^k_0) \|^2  \nonumber
 \end{eqnarray}
 Substituting the definition of $\hat{z}^k_0$ we have 
 \begin{eqnarray}
     \left\|z_{0}^k - z^* - \gamma_1 n F(\hat{z}_0^k)\right\|^2 
 &\stackrel{\eqref{hat_z}}{\leq}& \|z_{0}^k - z^*\|^2 -2\gamma_1 n \mu \| z^k_0  -\gamma_2 F(z^k_0) -z^*\|^2 \nonumber \\ && 
 + 2\gamma_1 n (\gamma_1 n - \gamma_2 +\gamma_2^2L^2 (\gamma_1 n + \gamma_2)) \|F(z^k_0) \|^2\nonumber 
\end{eqnarray}

We, next, make use of inequality \eqref{ineqa_b} to obtain \vspace{-0.2cm}
\begin{eqnarray}
 \left\|z_{0}^k - z^* - \gamma_1 n F(\hat{z}_0^k)\right\|^2 
 &\stackrel{\eqref{ineqa_b}}{\leq}& (1 - \gamma_1 n \mu) \|z_{0}^k - z^*\|^2 \nonumber \\ && 
 + 2\gamma_1 n \left[\gamma_1 n - \gamma_2 +\gamma_2^2L^2 (\gamma_1 n + \gamma_2) +\gamma_2^2 \mu\right] \|F(z^k_0) \|^2\nonumber   \\
 &=& \left(1 - \frac{1}{2} \gamma_1 n \mu\right)^2 \|z_{0}^k - z^*\|^2 - \frac{1}{4}\gamma_1^2 n^2 \mu^2 \|z_{0}^k - z^*\|^2 \nonumber \\ 
 && + 2\gamma_1 n \left[\gamma_1 n - \gamma_2 +\gamma_2^2L^2 (\gamma_1 n + \gamma_2) +\gamma_2^2 \mu\right] \|F(z^k_0) \|^2\nonumber  \\
 &\stackrel{\eqref{DefLipschitz}}{\leq}& \left(1 - \frac{1}{2} \gamma_1 n \mu\right)^2 \|z_{0}^k - z^*\|^2 + U \|z_{0}^k - z^*\|^2\nonumber 
\end{eqnarray}
where $U = \left\{ \gamma_1^2 n^2 (2L^2 - \frac{\mu^2}{4}) +2\gamma_1 \gamma_2 nL^2 \left[-1  +\gamma_2 (\gamma_1 nL^2 + \gamma_2L^2 +\mu) \right]\right\}$.
\end{proof}

\begin{lemma} \label{Lemma step sizes scsc}
    If the step size of the SEG-RR algorithm satisfy $\gamma_1 \leq \frac{\mu}{10L_{max}^2\sqrt{10n^2+2n+54}}$, $\gamma_2 = 2 \gamma_1$ then the following holds:
    \begin{eqnarray}
       \frac{U}{1 - \frac{\gamma_1 n \mu}{2}} + \frac{6nCL_{max}^2\gamma_1}{\mu} &\leq& \frac{\gamma_1 n \mu}{4} \label{upper-b1}
  \end{eqnarray}
where the constants are $C = 2 \left[(25+n)A +10n^2 L^2\right] \gamma_1^2 +A \gamma_2^2$,\\
$U = \left\{ \gamma_1^2 n^2 (2L^2 - \frac{\mu^2}{4}) +2\gamma_1 \gamma_2 nL^2 \left[-1  +\gamma_2 (\gamma_1 nL^2 + \gamma_2L^2 +\mu) \right]\right\}$.
\end{lemma}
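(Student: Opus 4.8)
The plan is to prove this step-size lemma by direct verification: I would substitute $\gamma_2 = 2\gamma_1$ throughout, reduce $U$ and $C$ to explicit polynomials in $\gamma_1$ whose coefficients involve only $n$, $L_{max}$ and $\mu$, and then invoke the hypothesis on $\gamma_1$ to show that each surviving contribution is a controlled fraction of the target $\frac{\gamma_1 n\mu}{4}$. First I would record three elementary reductions that replace all problem constants by $L_{max}$ and $\mu$: from the definition $A = \frac{2}{n}\sum_i L_i^2$ one has $A \le 2L_{max}^2$; one has $L \le L_{max}$; and strong monotonicity combined with Lipschitzness gives $\mu\|z_1-z_2\|^2 \le \langle F(z_1)-F(z_2), z_1-z_2\rangle \le L\|z_1-z_2\|^2$ by Cauchy--Schwarz, hence $\mu \le L \le L_{max}$. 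I would also note that the step-size ceiling forces $\gamma_1 n\mu \le \frac{n\mu^2}{10L_{max}^2\sqrt{10n^2+2n+54}} \le \frac{1}{10\sqrt{10}} < 1$, so $1 - \frac{\gamma_1 n\mu}{2} \ge \frac12$ and therefore $\frac{1}{1-\gamma_1 n\mu/2} \le 2$. Consequently it suffices to control $2\bar U$, where $\bar U$ is any nonnegative upper bound on $U$.

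Next I would produce $\bar U$ by discarding the two manifestly negative terms of $U$, namely $-\frac{\gamma_1^2 n^2\mu^2}{4}$ and the $-2\gamma_1\gamma_2 n L^2$ piece, leaving the leading term $2\gamma_1^2 n^2 L^2$ together with the strictly higher-order $\gamma_1^3$ and $\gamma_1^4$ remainders coming from $2\gamma_1\gamma_2 nL^2\cdot\gamma_2(\gamma_1 nL^2+\gamma_2 L^2+\mu)$. In parallel I would simplify $C$ at $\gamma_2 = 2\gamma_1$ to $C = \big[(54+2n)A + 20n^2L^2\big]\gamma_1^2$ and bound it with $A \le 2L_{max}^2$ and $L \le L_{max}$ to obtain exactly $C \le 2(10n^2+2n+54)L_{max}^2\gamma_1^2$. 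The appearance of the factor $10n^2+2n+54$ here is the structural heart of the lemma: it matches the quantity under the square root in the hypothesis, so the squared step-size bound $\gamma_1^2 \le \frac{\mu^2}{100 L_{max}^4(10n^2+2n+54)}$ cancels it cleanly and turns $\frac{6nCL_{max}^2\gamma_1}{\mu}$ into a term of the form $\mathrm{const}\cdot\gamma_1 n\mu$.

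Finally, using $\gamma_1^2 \le \frac{\mu^2}{100 L_{max}^4(10n^2+2n+54)}$ together with $10n^2+2n+54 \ge 10n^2$, I would bound each surviving summand of $2\bar U + \frac{6nCL_{max}^2\gamma_1}{\mu}$ separately: the leading $4\gamma_1^2 n^2 L^2$ by $\frac{4}{10\sqrt{10}}\gamma_1 n\mu$, the $C$-term by $\frac{12}{100}\gamma_1 n\mu$, and the $\gamma_1^3$, $\gamma_1^4$ remainders either by quantities carrying an extra factor of $\gamma_1$ or by tiny absolute constants of size $O(1/n^2)$. Summing these shows the total stays below $\frac14\gamma_1 n\mu$. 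I expect the main obstacle to be precisely that this budget is tight: the two leading contributions already consume roughly $0.1265 + 0.12 \approx 0.247$ of the available $0.25$, so the constants must be tracked honestly and the normalization (the specific numerical factor $10$ and the exact polynomial $10n^2+2n+54$ rather than looser bounds) chosen so that the higher-order remainders genuinely fit inside the remaining slack.
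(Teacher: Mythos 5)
Your proposal is correct and matches the paper's own argument in all essentials: both reduce $C$ at $\gamma_2 = 2\gamma_1$ to $\left[(54+2n)A + 20n^2L^2\right]\gamma_1^2 \le 2(10n^2+2n+54)L_{max}^2\gamma_1^2$ (this is exactly where the polynomial under the square root comes from), discard the negative contributions of $U$, and use $A \le 2L_{max}^2$, $L \le L_{max}$, $\mu \le L_{max}$ to match each surviving term against the budget $\frac{\gamma_1 n\mu}{4}$. The only difference is bookkeeping: the paper works backwards---clearing the denominator exactly, first showing the bracket $-1 + 2\gamma_1\left[\gamma_1(n+2)L^2 + \mu\right] \le -\frac{7}{9}$ before dropping all negative terms, and splitting the budget as $\frac{\mu}{8} + \frac{\mu}{8}$ to read off sufficient step-size conditions---whereas you verify forwards with the cruder bound $\frac{1}{1 - \gamma_1 n\mu/2} \le 2$, which is why your accounting ($\approx 0.1265 + 0.12$ plus remainders of order $10^{-3}$, still below $0.25$) is tighter than the paper's but remains valid.
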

\begin{proof}
We have that:
    \begin{eqnarray}
       \frac{U}{1 - \frac{\gamma_1 n \mu}{2}} + \frac{6nCL_{max}^2\gamma_1}{\mu} &\leq& \frac{\gamma_1 n \mu}{4} \nonumber \\
       \iff \frac{\gamma_1^2 n^2 (2L^2 - \frac{\mu^2}{4}) +2\gamma_1 \gamma_2 nL^2 \left[-1  +\gamma_2 (\gamma_1 nL^2 + \gamma_2L^2+\mu) \right]}{1 - \frac{\gamma_1 n \mu}{2}} +\frac{6nCL_{max}^2\gamma_1}{\mu} &\leq& \frac{\gamma_1 n \mu}{4}\nonumber \\
       \iff \frac{\gamma_1 n (2L^2 - \frac{\mu^2}{4}) +2 \gamma_2 L^2 \left[-1  +\gamma_2 (\gamma_1 nL^2 + \gamma_2L^2+\mu) \right]}{1 - \frac{\gamma_1 n \mu}{2}}  + \frac{6CL_{max}^2}{\mu} &\leq& \frac{\mu}{4}\nonumber 
      \end{eqnarray}
      Rearranging the terms we get
      \begin{eqnarray}
          && \gamma_1 n \left(2L^2 - \frac{\mu^2}{8}\right) +2 \gamma_2 L^2 \left[-1  +\gamma_2 (\gamma_1 nL^2 + \gamma_2L^2+\mu) \right] + \frac{6CL_{max}^2}{\mu} \left(1 - \frac{\gamma_1 n \mu}{2}\right) - \frac{\mu}{4} \leq 0 \nonumber\\
          &\stackrel{}{\iff}& \gamma_1 n \left(2L^2 - \frac{\mu^2}{8}\right) +2 \gamma_2 L^2 \left[-1  +\gamma_2 (\gamma_1 nL^2 + \gamma_2L^2+\mu) \right] \nonumber \\
          && + \frac{6\left\{2 \left[(25+n)A +10n^2 L^2\right] \gamma_1^2 +A \gamma_2^2\right\}L_{max}^2}{\mu} \left(1 - \frac{\gamma_1 n \mu}{2}\right) - \frac{\mu}{4} \leq 0 \nonumber \\
          &\stackrel{\gamma_2 = 2 \gamma_1}{\iff}& \gamma_1 n \left(2L^2 - \frac{\mu^2}{8}\right) +4 L^2\gamma_1 \left\{-1  +2\gamma_1[\gamma_1 (n+2) L^2 +\mu] \right\} \nonumber \\ 
          && + \frac{6\left[(54+2n) A +20n^2 L^2\right]\gamma_1^2L_{max}^2}{\mu} \left(1 - \frac{\gamma_1 n \mu}{2}\right) - \frac{\mu}{4} \leq 0 \label{eq-to-continue-from-k}
      \end{eqnarray}
    For $\gamma_1 \leq \frac{1}{3\sqrt{2n(n-1)}L_{max}}$, we have that
      \begin{eqnarray}
          \left\{-1  +2\gamma_1[\gamma_1 (n+2) L^2 +\mu] \right\}&\stackrel{\gamma_1 \leq \frac{1}{3\sqrt{2}nL_{max}}}{\leq}& \left\{-1  +\frac{2(n+2)L^2}{18n(n-1)L_{max}^2} +\frac{\mu}{3\sqrt{2n(n-1)}L_{max}} \right\} \nonumber \\ 
          &\leq& \left[-1  + \frac{1}{9(n-1)L_{max}^3\sqrt{2n(n-1)}} +\frac{\mu}{9n(n-1)L_{max}^2}\right] \nonumber \\ 
          &\leq& -1 + \frac{1}{9} +\frac{1}{9} = - \frac{7}{9} \label{eq1_fak}
      \end{eqnarray}
      Thus, using \eqref{eq1_fak} and the fact that $(1 - \frac{\gamma_1 n \mu}{2}) \leq 1$ in \eqref{eq-to-continue-from-k} it suffices to ensure that 
      \begin{eqnarray}
         && \gamma_1 n \left(2L^2 - \frac{\mu^2}{8}\right) - \frac{28L^2}{9}\gamma_1 + \frac{6\left[(54+2n) A +20n^2 L^2\right]L_{max}^2}{\mu}\gamma_1^2- \frac{\mu}{4} \leq 0 \nonumber \\ 
         && \gamma_1 \left[2 n L^2 - \frac{28L^2}{9}-\frac{n\mu^2}{8}\right]+ \frac{6\left[(54+2n) A +20n^2 L^2\right]L_{max}^2}{\mu}\gamma_1^2- \frac{\mu}{4} \leq 0 \nonumber
      \end{eqnarray}
    Thus, it suffices to ensure that:
    \begin{eqnarray}
       2 n L^2 \gamma_1+ \frac{6\left[(54+2n) A +20n^2 L^2\right]L_{max}^2}{\mu}\gamma_1^2- \frac{\mu}{4} \leq 0 \label{stepsize_equation_sc-sc}
    \end{eqnarray}
    In order, now, to derive a simple expression for the stepsize $\gamma_1$, instead of solving the quadratic inequality \eqref{stepsize_equation_sc-sc}, we choose $\gamma_1$ such that
    \begin{eqnarray}
        2 n L^2 \gamma_1 - \frac{\mu}{8} \leq 0 &\text{and}& + \frac{6\left[(54+2n) A +20n^2 L^2\right]L_{max}^2}{\mu}\gamma_1^2- \frac{\mu}{8} \leq 0 \nonumber \\
        \gamma_1 \leq \frac{\mu}{16 n L^2} &\text{and}& \gamma_1\leq \frac{\mu}{\sqrt{48\left[(54+2n) A +20n^2 L^2\right]L_{max}^2}} \leq 0 \label{before-final-stepsizes}
    \end{eqnarray}
     Using the fact that $A = \frac{2}{n} \sum\limits_{i=0}^{n-1} L_i^2 \leq 2 L_{max}^2$ and $L^2 \leq L_{max}^2$, we observe that it suffices 
     \begin{eqnarray}
         \gamma_1 \leq \frac{\mu}{16 n L^2} &\text{and}& \gamma_1\leq \frac{\mu}{10L_{max}^2\sqrt{10n^2+2n+54}} \leq 0 \nonumber \\
         &\iff& \gamma_1 \leq \frac{\mu}{10L_{max}^2\sqrt{10n^2+2n+54}} \nonumber
     \end{eqnarray}
     Lastly, incorporating the initial constraint that $\gamma_1 \leq \frac{1}{3\sqrt{2n(n-1)}L_{max}}$, 
     it suffices to choose $\gamma_2 = 2 \gamma_1$ and \\
     \begin{eqnarray}
        \gamma_{1} &=& \min \left\{\frac{1}{3\sqrt{2n(n-1)}L_{max}}, \frac{\mu}{10L_{max}^2\sqrt{10n^2+2n+54}}\right\} = \frac{\mu}{10L_{max}^2\sqrt{10n^2+2n+54}} \nonumber
     \end{eqnarray}
     \end{proof}

\subsubsection{Proof of Theorem \ref{thm SC-SC4}}
\label{app thm SC-SC4}
\begin{proof}
Denote with $\Bar{z}^k_i = z_i^k - \gamma_2 F_{\pi_i^k}(z_i^k)$ the extrapolation step of the SEG-RR algorithm from \eqref{SEG_extrapolation_step} and let 
\begin{eqnarray*}
    \hat{z}_i^k = z_i^k - \gamma_2 F(z_i^k)
\end{eqnarray*}
We start with the proof of the 1st point (inequality \eqref{sc1}).\\

\textbf{Proof of Inequality \eqref{sc1}.}
Using the update rule \eqref{SEG_update_rule} of SEG-RR, we have that:
\begin{eqnarray}
		z_0^{k+1} &=&z_{n}^k\nonumber\\ &\stackrel{\eqref{SEG_update_rule}}{=}& z_{n-1}^k - \gamma_1 F_{\pi^k_{n-1}}(\Bar{z}_{n-1}^k) \nonumber\\
		&\stackrel{\eqref{SEG_update_rule}}{=}\;& z_{0}^k -\gamma_1 \sum_{i=0}^{n-1} F_{\pi_i^k}(\Bar{z}_{i}^k)\nonumber\\
		&=\;& z_{0}^k -\gamma_1 n F(\hat{z}^k_0) \label{gewg.gwe}
		\;- \gamma_1 \sum_{i=0}^{n-1} ( F_{\pi_i^k}(\Bar{z}_{i}^k) -
			F_{\pi_i^k}(\hat{z}_{0}^k)\big) \label{unro-ineq}
\end{eqnarray}
where we have expressed an epoch-level update by using \eqref{SEG_update_rule} and in the last step we have added and subtracted the term $\gamma_1 n F(\hat{z}^k_0) = \gamma_1 \sum\limits_{i=0}^{n-1}F_{\pi_i^k}(\hat{z}_{0}^k)$, utilizing the finite sum structure of the operator $F(z) = \frac{1}{n} \sum\limits_{i=0}^{n-1} F_{\pi_i^k}(z)$.
Subtracting $z^\star$ from both sides of \eqref{unro-ineq} and taking the norm, we get:
\begin{eqnarray}
    \|z_0^{k+1} - z^*\|^2
    &=& \left\|z_{0}^k - z^* - \gamma_1 n F(\hat{z}^k_0)-\gamma_1\sum_{i=0}^{n-1} (F_{\pi_i^k}(\Bar{z}^k_{i}) - F_{\pi_i^k}^k(\hat{z}_0^k))\right\|^2\label{norm_to_exp}
\end{eqnarray}
We, next, use Young's inequality \eqref{Youngwitht=1-gamma} with $t = 1 - \frac{\gamma_1 n\mu}{2}\in (0,1)$ in order to expand the norm in the right-hand side (RHS) of \eqref{norm_to_exp} and then simplify the resulting terms. Specifically, we obtain:
    \begin{eqnarray}
        \|z_0^{k+1} - z^*\|^2 &\stackrel{\eqref{Youngwitht=1-gamma}}{\leq}& \frac{\left\|z_{0}^k - z^* - \gamma_1 n F(\hat{z}^k_0) \right\|^2 }{1 - \frac{\gamma_1 n\mu}{2}} +\frac{2}{\gamma_1 n\mu} \left\|\gamma_1 \sum_{i=0}^{n-1}(F_{\pi_i^k}(\Bar{z}^k_{i}) - F_{\pi_i^k}(\hat{z}_0^k))\right\|^2\label{ineq_after-Young}
    \end{eqnarray}
    Taking expectation condition on the filtration $\mathcal{F}^k$ (history of $z_0^k$) and using Lemma \ref{Lemma: Strongly-monotone-case1} to bound the second term in the right-hand side of \eqref{ineq_after-Young} and get 
 \begin{eqnarray}
    \Expep{\|z^{k+1}_0 - z^*\|^2} &\leq& \frac{1}{1 - \frac{\gamma_1 n\mu}{2}}\underbrace{\left\|z_{0}^k - z^* \!-\! \gamma_1 n F(\hat{z}^k_0) \right\|^2 }_{T_1} + \frac{12\gamma_1L_{max}^2}{\mu} \underbrace{\Expep{\sum_{i=0}^{n-1} \left\|z^k_{i}-z^k_0\right\|^2}}_{T_2} \nonumber \\ 
 && +\frac{6\gamma_1\gamma_2^2L_{max}^2}{\mu} \underbrace{\Expep{\sum_{i=0}^{n-1} \left\|F_{\pi_i^k}(z^k_{0}) -F(z^k_{0}) \right\|^2}}_{T_3} \label{ineq_with_3terms}
 \end{eqnarray}
 We, next, use the upper bounds from Lemma~\ref{Lemma: SC-SC2}, \ref{Lemma: squared-norms-epoch-iterates} and Proposition~\ref{app: bound for const assumpt} with $\gamma_2 = 2\gamma_1, \gamma_1 \leq \frac{1}{3\sqrt{2n(n-1)}L_{max}}$ in order to bound the terms $T_1, T_2, T_3$ as follows:
\begin{eqnarray}
   T_1 &\stackrel{}{\leq}& \left(1 - \frac{1}{2} \gamma_1 n \mu\right)^2 \|z_{0}^k - z^*\|^2 +U \|z_{0}^k - z^*\|^2 \quad \quad \label{main bound T-new2}\\
   T_2 &\stackrel{}{\leq}& \left[10n^2L^2 + A (25+n)\right]n\gamma_1^2 \norm{z_0^k - z^*}^2 + 2n(n+25)\gamma_1^2\sigma_*^2 \label{bound_T_2} \\
   T_3 &\stackrel{}{\leq}& A \|z - z^*\|^2 +2 \sigma_*^2 \label{bound_T_3}
\end{eqnarray}
where $U = \left\{ \gamma_1^2 n^2 (2L^2 - \frac{\mu^2}{4}) +2\gamma_1 \gamma_2 nL^2 \left[-1  +\gamma_2 (\gamma_1 nL^2 + \gamma_2L^2 +\mu) \right]\right\}$. 

Substituting the upper bounds \eqref{main bound T-new2}, \eqref{bound_T_2}, \eqref{bound_T_3} into \eqref{ineq_with_3terms} and letting $C = 2 \left[(25+n)A +10n^2 L^2\right] \gamma_1^2 +A \gamma_2^2$ for brevity, we get:
\begin{eqnarray}
    \Expep{\|z^{k+1}_0 - z^*\|^2}
    &\leq& \left(1 - \frac{1}{2} \gamma_1 n \mu + \frac{U}{1 - \frac{\gamma_1 n \mu}{2}} + \frac{6nCL_{max}^2\gamma_1}{\mu}\right) \|z_{0}^k - z^*\|^2  \nonumber \\ 
    && + \frac{24nL_{max}^2\gamma_1}{\mu}\left[(25+n) \gamma_1^2+\gamma_2^2 \right]\sigma_*^2 \quad \quad \label{last-ineq-nnew}
\end{eqnarray}
Choosing the step size $\gamma_2 = 2 \gamma_1, \gamma_1 \leq \frac{\mu}{10L_{max}^2\sqrt{10n^2+2n+54}}$ appropriately and using Lemma \eqref{Lemma step sizes scsc}, we can upper bound the term
    \begin{eqnarray}
       \frac{U}{1 - \frac{\gamma_1 n \mu}{2}} + \frac{6nCL_{max}^2\gamma_1}{\mu} &\leq& \frac{\gamma_1 n \mu}{4} \label{upper-b1-thm1}
  \end{eqnarray}

Substituting \eqref{upper-b1-thm1} into \eqref{last-ineq-nnew}, we obtain
\begin{eqnarray}
    \Expep{\|z^{k+1}_0 - z^*\|^2}&\stackrel{}{\leq}&
    \Bigg(1 - \frac{1}{4}\gamma_1 n\mu \Bigg){\|z_{0}^k - z^*\|^2}+\frac{24nL_{max}^2\gamma_1\left[(25+n) \gamma_1^2+\gamma_2^2\right]}{\mu} \sigma_*^2 \quad \quad \label{sc-sc-res-to}
\end{eqnarray}

Taking expectation on both sides and using the tower property of expectations, we have that:
\begin{eqnarray}
        \Expe{\|z_0^{k+1} - z^*\|^2 }
        &\leq& \Bigg(1 - \frac{1}{4} \gamma_1 n\mu \Bigg)\|z^k_0- z^*\|^2+\frac{24nL_{max}^2\gamma_1 \left[(25+n) \gamma_1^2+\gamma_2^2\right]}{\mu} \sigma_*^2\quad \quad \label{eq_after_expe}\\
     &\leq&\Bigg(1 - \frac{1}{4} \gamma_1 n\mu \Bigg)^{k+1}\|z^0_0- z^*\|^2 +\frac{24nL_{max}^2\gamma_1 \left[(25+n) \gamma_1^2+\gamma_2^2\right]}{\mu} \sum_{i=1}^k (1 - \frac{1}{4} \gamma_1 n\mu)^i  \sigma_*^2 \nonumber\\ 
   &\leq& \Bigg(1 - \frac{1}{4} \gamma_1 n\mu \Bigg)^{k+1}\|z^0_0- z^*\|^2 +\frac{24nL_{max}^2\gamma_1 \left[(25+n) \gamma_1^2+\gamma_2^2\right]}{\mu} \sum_{i=1}^{\infty} (1 - \frac{1}{4} \gamma_1 n\mu)^i \sigma_*^2 \nonumber\\ 
   &\stackrel{}{=}& \left(1 - \frac{\gamma_1 n\mu}{4} \right)^{k+1} {\|z_0- z^*\|^2} +\frac{96L_{max}^2}{\mu^2} \left[(25+n) \gamma_1^2+\gamma_2^2\right] \sigma_*^2 \quad \quad \label{sc-sc result-4}
\end{eqnarray}

\textbf{Proof of Equation \eqref{sc2}.}
From \eqref{sc-sc result-4} we have that 
\begin{eqnarray}
    \Expe{\|z_0^{K} - z^*\|^2 } &\leq& \left(1 - \frac{\gamma_1 n\mu}{4} \right)^K {\|z_0- z^*\|^2} +\frac{96L_{max}^2}{\mu^2} \left[(25+n) \gamma_1^2+\gamma_2^2\right] \sigma_*^2\nonumber\\
    &\stackrel{\gamma_2 = 2 \gamma_1}{\leq}& \left(1 - \frac{\gamma_1 n\mu}{4} \right)^K {\|z_0- z^*\|^2} +\frac{96(29+n)L_{max}^2}{\mu^2} \gamma_1^2\sigma_*^2\nonumber \\
    &\stackrel{\eqref{ineq exponential}}{\leq}& e^{\frac{-\gamma_1 nK\mu}{4}} {\|z_0- z^*\|^2} +\frac{96(29+n)L_{max}^2}{\mu^2} \gamma_1^2\sigma_*^2 \label{intem_}
\end{eqnarray}
We substitute $\gamma_1 = \min\left\{\frac{\mu}{10L_{max}^2\sqrt{10n^2+2n+54}}, \frac{4\log (n^{1/2}K)}{\mu nK}\right\} \leq \frac{4\log (n^{1/2}K)}{\mu nK}$ and bound the second term in the right-hand side (RHS) of \eqref{intem_} as
\begin{eqnarray}
    \frac{96(29+n)L_{max}^2}{\mu^2} \gamma_1^2\sigma_*^2 \leq \frac{96(29+n)L_{max}^2}{\mu^4} \frac{\log^2 (n^{1/2}K)}{n^2K^2}\sigma_*^2 \label{second_term_rhs}
\end{eqnarray}
Substituting \eqref{second_term_rhs} into \eqref{intem_}, we obtain the following:
\begin{equation}
\Expe{\|z_0^{K+1} - z^*\|^2 } \leq e^{-\frac{\gamma_1 nK\mu}{4}} {\|z_0- z^*\|^2} +\frac{96(29+n)L_{max}^2}{\mu^4} \frac{16\log^2 (n^{1/2}K)}{n^2K^2}\sigma_*^2 \label{second_term_edited}
\end{equation}
We now consider the following cases:
\paragraph{Case 1: $\frac{\mu}{10L_{max}^2\sqrt{10n^2+2n+54}}\leq \frac{\log (n^{1/2}K)}{\mu nK}$} In this case we have that $\gamma_1 = \frac{\mu}{10L_{max}^2\sqrt{10n^2+2n+54}}$, which implies that the RHS of \eqref{second_term_edited} is bounded by 
\begin{eqnarray}
    && e^{-\frac{\gamma_1 nK\mu}{4}} {\|z_0- z^*\|^2} +\frac{96(29+n)L_{max}^2}{\mu^4} \frac{16\log^2 (n^{1/2}K)}{n^2K^2}\sigma_*^2 \nonumber \\ 
    &\leq& e^{-\frac{nK\mu^2}{40L_{max}^2\sqrt{10n^2+2n+54}}} {\|z_0- z^*\|^2} +\frac{96(29+n)L_{max}^2}{\mu^4} \frac{16\log^2 (n^{1/2}K)}{n^2K^2}\sigma_*^2 \nonumber\\
    &\leq& e^{-\frac{K\mu^2}{40\sqrt{12}L_{max}^2}} {\|z_0- z^*\|^2} +\frac{96(29+n)L_{max}^2}{\mu^4} \frac{16\log^2 (n^{1/2}K)}{n^2K^2}\sigma_*^2\label{case1}
s\end{eqnarray}

\paragraph{Case 2: $\frac{4\log (n^{1/2}K)}{\mu nK} \leq \frac{\mu}{10L_{max}^2\sqrt{10n^2+2n+54}}$}
In this case we have that $\gamma_1 = \frac{4\log (n^{1/2}K)}{\mu nK}$, which implies that the RHS of \eqref{second_term_edited} is bounded by 
\begin{eqnarray}
    && e^{-\frac{\gamma_1 nK\mu}{4}} {\|z_0- z^*\|^2} +\frac{96(29+n)L_{max}^2}{\mu^4} \frac{16 \log^2 (n^{1/2}K)}{n^2K^2}\sigma_*^2 \nonumber \\ 
    &\leq& \frac{1}{nK^2}{\|z_0- z^*\|^2} +\frac{96(29+n)L_{max}^2}{\mu^4} \frac{16\log^2 (n^{1/2}K)}{n^2K^2}\sigma_*^2 \label{case2}
\end{eqnarray} 
Taking the maximum of the right-hand side of \eqref{case1} and \eqref{case2} and using the inequality $\max\{ a, b\} \leq a + b$, we obtain the desired result which holds for both cases:
\begin{equation}
  \Expe{\|z_0^{K} - z^*\|^2 } \leq e^{-\frac{K\mu^2}{40\sqrt{12}L_{max}^2}} {\|z_0- z^*\|^2} + \frac{1}{nK^2}{\|z_0- z^*\|^2} +2 \frac{96(29+n)L_{max}^2}{\mu^4} \frac{16 \log^2 (n^{1/2}K)}{n^2K^2}\sigma_*^2\nonumber  
\end{equation}
Suppressing constant and logarithmic terms, we get the final result
\begin{eqnarray}
\Expe{\|z_0^{K+1} - z^*\|^2 }&=& \Tilde{\mathcal{O}}\left( e^{-\frac{K\mu^2}{L_{max}^2}} + \frac{1}{{nK^2}}\right)\nonumber
\end{eqnarray}\end{proof}

\subsection{Proofs for Affine Case}
\vspace{+0.1cm}
\subsubsection{Lemma for Iterates in Affine Case}
\vspace{+0.1cm}
\begin{lemma} \label{lemma: bil_2ndTerm_bound}
    Suppose that $F_i, \forall i \in [n-1]$ are affine and Assumption \ref{DefLipschitz} holds. If the step size of SEG-RR Algorithm satisfy $\gamma_2 = 4\gamma_1, \gamma_1 \in \Big(0,\frac{1}{3\sqrt{2n(n-1)}L_{max}}\Big]$ then the following holds
    \begin{eqnarray}
      \Expep{\left\| \sum_{i=0}^{n-1}Q_{\pi_i^k}(I - \gamma_2 Q_{\pi_i^k}) (z^k_i -z^k_0 +z^*) + (I-\gamma_2 Q_{\pi_i^k}) b_{\pi_i^k}\right\|^2} 
     &\stackrel{}{\leq}& 2 n L_{max} D_2 \gamma_1^2 \norm{z_0^k - z^*}^2 \quad \nonumber \\ 
     && + 2L_{max} \left[\gamma_2^2 + 2n\gamma_1^2(n+25)\right] \sigma_*^2 \nonumber
    \end{eqnarray}
    where $D_2 = \left[10n^2L^2 + (n+25) A\right]$.
\end{lemma}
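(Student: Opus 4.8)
The plan is to linearize everything via the affine structure $F_{\pi_i^k}(z)=Q_{\pi_i^k}z+b_{\pi_i^k}$ and reduce the claim to quantities already controlled by Lemma~\ref{Lemma: squared-norms-epoch-iterates}. First I would record the elementary identity
\[
Q_{\pi_i^k}(I-\gamma_2 Q_{\pi_i^k})z_*+(I-\gamma_2 Q_{\pi_i^k})b_{\pi_i^k}=(I-\gamma_2 Q_{\pi_i^k})\bigl(Q_{\pi_i^k}z_*+b_{\pi_i^k}\bigr)=(I-\gamma_2 Q_{\pi_i^k})F_{\pi_i^k}(z_*),
\]
so that the vector inside the norm rewrites as
\[
\sum_{i=0}^{n-1}Q_{\pi_i^k}(I-\gamma_2 Q_{\pi_i^k})(z_i^k-z_0^k)\;+\;\sum_{i=0}^{n-1}(I-\gamma_2 Q_{\pi_i^k})F_{\pi_i^k}(z_*).
\]
Applying $\norm{a+b}^2\le 2\norm{a}^2+2\norm{b}^2$ then cleanly separates a \emph{deviation} contribution (built from $z_i^k-z_0^k$) from an \emph{optimum} contribution (built from $F_{\pi_i^k}(z_*)$).

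For the optimum contribution the decisive observation is that summing over a full permutation is the same as summing over $[n]$, so $\sum_{i=0}^{n-1}F_{\pi_i^k}(z_*)=\sum_{j=1}^{n}F_j(z_*)=nF(z_*)=0$ because $z_*$ solves the VIP. Hence the step-size-free part vanishes and only
\[
\Bigl\|\sum_{i=0}^{n-1}(I-\gamma_2 Q_{\pi_i^k})F_{\pi_i^k}(z_*)\Bigr\|^2=\gamma_2^2\Bigl\|\sum_{j=1}^{n}Q_jF_j(z_*)\Bigr\|^2
\]
survives; this I would bound with \eqref{ineq1}, the operator bound $\norm{Q_j}\le L_{max}$, and the definition $\sigma_*^2=\tfrac1n\sum_j\norm{F_j(z_*)}^2$, producing a term proportional to $\gamma_2^2\sigma_*^2$. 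It is precisely this cancellation that keeps the optimum term at order $\gamma_2^2$ rather than as a step-size-independent $\sigma_*^2$ term.

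For the deviation contribution I would apply \eqref{ineq1} to pass from the squared norm of the sum to the sum of squared norms, bound each operator factor by $\norm{Q_{\pi_i^k}(I-\gamma_2 Q_{\pi_i^k})}\le L_{max}(1+\gamma_2 L_{max})$, where the restriction $\gamma_1\le\frac{1}{3\sqrt{2n(n-1)}L_{max}}$ with $\gamma_2=4\gamma_1$ keeps $\gamma_2 L_{max}$ below a fixed constant, and then take $\Expep{\cdot}$ and invoke Lemma~\ref{Lemma: squared-norms-epoch-iterates} to replace $\Expep{\sum_{i}\norm{z_i^k-z_0^k}^2}$ by $n\bigl([10n^2L^2+(n+25)A]\gamma_1^2\norm{z_0^k-z_*}^2+2(n+25)\gamma_1^2\sigma_*^2\bigr)$. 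Writing $D_2=10n^2L^2+(n+25)A$ and collecting the $\norm{z_0^k-z_*}^2$ and $\sigma_*^2$ pieces from both contributions then assembles into the stated bound.

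I expect the main obstacle to be the constant and $L_{max}$-power bookkeeping rather than any conceptual step: one has to track how the operator-norm bounds interact with the factor $n$ from \eqref{ineq1} and the further factor $n$ from Lemma~\ref{Lemma: squared-norms-epoch-iterates}, and check that every $\gamma_2 L_{max}$-type cross term generated by $(I-\gamma_2 Q_{\pi_i^k})$ is absorbed by the step-size constraint so that the coefficients collapse to the compact form claimed. The only genuinely structural ingredient is the $(I-\gamma_2 Q)F(z_*)$ factorization together with the resulting cancellation $\sum_i F_{\pi_i^k}(z_*)=0$; everything downstream is a mechanical application of \eqref{ineq1} and Lemma~\ref{Lemma: squared-norms-epoch-iterates}.
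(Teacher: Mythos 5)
Your proposal follows essentially the same route as the paper's proof: the same Young-type split into a deviation term (built from $z_i^k-z_0^k$) and an optimum term, the same cancellation via $F(z_*)=0$ (your factorization $(I-\gamma_2 Q_{\pi_i^k})F_{\pi_i^k}(z_*)$ is just a compact rewriting of the paper's expansion into $F(z_*)-\gamma_2\sum_i Q_{\pi_i^k}F_{\pi_i^k}(z_*)$), followed by \eqref{ineq1}, the operator-norm bound, conditional expectation, and Lemma~\ref{Lemma: squared-norms-epoch-iterates}. The only divergence is in the $L_{max}$-power bookkeeping, where your honest bound $\norm{Q(I-\gamma_2 Q)}\le L_{max}(1+\gamma_2 L_{max})$ gives slightly larger constants than the compact form stated in the lemma --- a looseness already present in the paper's own derivation --- and does not affect the validity of the argument.
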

\begin{proof}
    We have that 
    \begin{eqnarray}
    && \left\| \sum_{i=0}^{n-1}Q_{\pi_i^k}(I - \gamma_2 Q_{\pi_i^k}) (z^k_i -z^k_0 +z^*) + (I-\gamma_2 Q_{\pi_i^k}) b_{\pi_i^k}\right\|^2 \nonumber \\
     &=& \left\| \sum_{i=0}^{n-1}Q_{\pi_i^k}(I - \gamma_2 Q_{\pi_i^k}) (z^k_i -z^k_0) +\sum_{i=0}^{n-1}Q_{\pi_i^k}(I - \gamma_2 Q_{\pi_i^k})z^*+ (I-\gamma_2 Q_{\pi_i^k}) b_{\pi_i^k}\right\|^2\nonumber \\
    &=& \left\| \sum_{i=0}^{n-1}Q_{\pi_i^k}(I - \gamma_2 Q_{\pi_i^k}) (z^k_i -z^k_0) +(Qz^* +b)- \gamma_2\sum_{i=0}^{n-1} Q_{\pi_i^k} (Q_{\pi_i^k} z^*+b_{\pi_i^k})\right\|^2\nonumber\\
    &\stackrel{\eqref{F_w_in_bilinear_games}}{=}& \left\| \sum_{i=0}^{n-1}Q_{\pi_i^k}(I - \gamma_2 Q_{\pi_i^k}) (z^k_i -z^k_0) +F(z^*) - \gamma_2\sum_{i=0}^{n-1} Q_{\pi_i^k} F_{\pi_i^k}(z^*) \right\|^2\nonumber 
    \end{eqnarray}
    Using the fact that $F(z^*)=0$, we continue our derivation as follows
    \begin{eqnarray}
    && \left\| \sum_{i=0}^{n-1}Q_{\pi_i^k}(I - \gamma_2 Q_{\pi_i^k}) (z^k_i -z^k_0 +z^*) + (I-\gamma_2 Q_{\pi_i^k}) b_{\pi_i^k}\right\|^2 \nonumber \\
    &\stackrel{}{=}& \left\| \sum_{i=0}^{n-1}Q_{\pi_i^k}(I - \gamma_2 Q_{\pi_i^k}) (z^k_i -z^k_0) - \gamma_2\sum_{i=0}^{n-1} Q_{\pi_i^k} F_{\pi_i^k}(z^*) \right\|^2\nonumber \\ 
    &\stackrel{\eqref{ineq1}}{\leq}& 2 \left\| \sum_{i=0}^{n-1}Q_{\pi_i^k}(I - \gamma_2 Q_{\pi_i^k}) (z^k_i -z^k_0)\right\|^2 + 2\left\| \gamma_2\sum_{i=0}^{n-1} Q_{\pi_i^k} F_{\pi_i^k}(z^*) \right\|^2\nonumber\\ 
    &\stackrel{\eqref{ineq1}}{\leq}& 2 n \sum_{i=0}^{n-1} \left\|Q_{\pi_i^k}(I - \gamma_2 Q_{\pi_i^k}) (z^k_i -z^k_0)\right\|^2 + 2\gamma_2^2 \sum_{i=0}^{n-1}\left\|Q_{\pi_i^k} F_{\pi_i^k}(z^*) \right\|^2 \nonumber\\
    &\leq& 2 n \left(L_{max} - \gamma_2 \lambda_{min}(Q_{\pi_i^k})\right) \sum_{i=0}^{n-1} \left\|z^k_i -z^k_0\right\|^2 + 2L_{max} \gamma_2^2 \sum_{i=0}^{n-1}\left\| F_{\pi_i^k}(z^*) \right\|^2\nonumber\\
    &=& 2 n \left(L_{max} - \gamma_2 \lambda_{min}(Q_{\pi_i^k})\right) \sum_{i=0}^{n-1} \left\|z^k_i -z^k_0\right\|^2 + 2L_{max} \gamma_2^2 \sigma_*^2 \label{add_subtract_terms}
 \end{eqnarray} 
 where $\sigma_*^2 = \frac{1}{n} \sum_{i=1}^{n}\left\| F_{\pi_i^k}(z^*) \right\|^2$ and $\lambda_{min}(Q_{\pi_i^k})= \min\limits_{i \in[n]} \min\limits_{\lambda} \lambda(Q_i)$ is the minimum eigenvalue of all $Q_{\pi_i^k}, i \in [n]$. 
 Taking expectation on both sides condition on the filtration $\mathcal{F}^k$, we get
 \begin{eqnarray}
     && \Expep{\left\| \sum_{i=0}^{n-1}Q_{\pi_i^k}(I - \gamma_2 Q_{\pi_i^k}) (z^k_i -z^k_0 +z^*) + (I-\gamma_2 Q_{\pi_i^k}) b_{\pi_i^k}\right\|^2} \nonumber\\
     &\leq& 2 n \left(L_{max} - \gamma_2 \lambda_{min}(Q_{\pi_i^k})\right) \Expep{\sum_{i=0}^{n-1} \left\|z^k_i -z^k_0\right\|^2} + 2L_{max} \gamma_2^2 \sigma_*^2\nonumber
 \end{eqnarray}
 Using lemma \ref{Lemma: squared-norms-epoch-iterates}, we continue our derivation as follows:
 \begin{eqnarray}
     && \Expep{\left\| \sum_{i=0}^{n-1}Q_{\pi_i^k}(I - \gamma_2 Q_{\pi_i^k}) (z^k_i -z^k_0 +z^*) + (I-\gamma_2 Q_{\pi_i^k}) b_{\pi_i^k}\right\|^2}\nonumber \\
     &\stackrel{\text{Lemma } \ref{Lemma: squared-norms-epoch-iterates}}{\leq}& 2 n \left(L_{max} - \gamma_2 \lambda_{min}(Q_{\pi_i^k})\right) \left\{10n^2\gamma_1^2 \norm{F(z_0^k)}^2 + A\gamma_1^2 (25+n) \norm{z_0^k - z^*}^2\right\}\quad \nonumber \\ 
     && +4 n\left(L_{max} - \gamma_2 \lambda_{min}(Q_{\pi_i^k})\right)(n+25)\gamma_1^2\sigma_*^2 + 2L_{max} \gamma_2^2 \sigma_*^2 \nonumber\\
     &\leq& 2 n L_{max}\left\{10n^2\gamma_1^2 \norm{F(z_0^k)}^2 + A\gamma_1^2 (25+n) \norm{z_0^k - z^*}^2\right\} \nonumber \\ 
     && + 2L_{max} \left[\gamma_2^2 + 2n(n+25) \gamma_1^2\right] \sigma_*^2 \nonumber
 \end{eqnarray}
 Lastly, applying the Lipschitz property of $F$ (Assumption~\ref{DefLipschitz}), we get
 \begin{eqnarray}
     \Expep{\left\| \sum_{i=0}^{n-1}Q_{\pi_i^k}(I - \gamma_2 Q_{\pi_i^k}) (z^k_i -z^k_0 +z^*) + (I-\gamma_2 Q_{\pi_i^k}) b_{\pi_i^k}\right\|^2} 
     &\stackrel{\eqref{DefLipschitz}}{\leq}& 2 n L_{max}\gamma_1^2 \left[10n^2L^2 + (n+25) A\right] \norm{z_0^k - z^*}^2 \quad \nonumber \\ 
     && + 2L_{max} \left[\gamma_2^2 + 2n\gamma_1^2(n+25)\right] \sigma_*^2 \nonumber
 \end{eqnarray}
\end{proof}

\begin{lemma} \label{lemma: stepsizes-bilinear}
    If $\gamma_2 = 4 \gamma_1$, $\gamma_1\in \Big(0, \frac{\lambda_{min}^+(Q)}{2\sqrt{120} nL^2_{max}}],$ then the following hold:
    \begin{eqnarray}
      1 -\gamma_1 n(\lambda_{min}^+(Q) -\gamma_2 L_{max}^2) +\frac{2L_{max}\gamma_1^3}{\lambda_{min}^+(Q)}\left[10n^2L^2 + (n+25) A\right] &\leq& 1 - \frac{\gamma_1 n \lambda_{min}^+(Q)}{2} \nonumber
    \end{eqnarray}
\end{lemma}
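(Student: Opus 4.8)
The plan is to collapse the claimed inequality to an elementary quadratic condition on $\gamma_1$ and then check that the prescribed step size satisfies it. First I would cancel the common additive constant $1$ on both sides; since every remaining term carries a factor of $\gamma_1 > 0$, dividing through by $\gamma_1$ shows that the statement is equivalent to
\begin{equation*}
n\gamma_2 L_{max}^2 + \frac{2L_{max}\gamma_1^2}{\lambda_{min}^+(Q)}\left[10n^2 L^2 + (n+25)A\right] \leq \frac{n\,\lambda_{min}^+(Q)}{2}.
\end{equation*}
Substituting $\gamma_2 = 4\gamma_1$ turns the first term into $4nL_{max}^2\gamma_1$, so the target becomes a quadratic-in-$\gamma_1$ inequality with nonnegative coefficients and a strictly positive right-hand side.

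Next I would control the bracket using the crude but sufficient bounds already available: by definition $A = \tfrac{2}{n}\sum_i L_i^2 \leq 2L_{max}^2$ and $L \leq L_{max}$, so that $10n^2 L^2 + (n+25)A \leq (10n^2 + 2n + 50)L_{max}^2 \leq 120\,n^2 L_{max}^2$, where the last step uses $2n + 50 \leq 110\,n^2$ for $n \geq 1$. This is precisely the source of the constant $\sqrt{120}$ appearing in the step-size restriction.

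I would then split the budget $\tfrac{n}{2}\lambda_{min}^+(Q)$ between the two terms, requiring each to be controlled separately by $\tfrac{n}{4}\lambda_{min}^+(Q)$. The quadratic term is the binding one: after the bracket bound it is dominated by an expression of the form $\tfrac{240\,n^2 L_{max}^{3}}{\lambda_{min}^+(Q)}\gamma_1^2$, and the choice $\gamma_1 \leq \tfrac{\lambda_{min}^+(Q)}{2\sqrt{120}\,nL_{max}^2}$ is calibrated exactly so that this piece is absorbed into its share of the budget. The linear term $4nL_{max}^2\gamma_1$ is then automatic, since $\tfrac{\lambda_{min}^+(Q)}{2\sqrt{120}\,nL_{max}^2} \leq \tfrac{\lambda_{min}^+(Q)}{16L_{max}^2}$ for every $n \geq 1$ (because $2\sqrt{120}\,n \geq 16$), which is exactly the threshold making $4nL_{max}^2\gamma_1 \leq \tfrac{n}{4}\lambda_{min}^+(Q)$. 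Summing the two controlled pieces recovers the required inequality.

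The step carrying the real work is the constant bookkeeping in the last paragraph: verifying that the single clean bound $\gamma_1 \leq \tfrac{\lambda_{min}^+(Q)}{2\sqrt{120}\,nL_{max}^2}$ simultaneously dominates both the linear and the quadratic sufficient conditions. There is no structural obstacle here — everything reduces to substituting the bounds from Lemma~\ref{lemma: bil_2ndTerm_bound} and Proposition~\ref{prop: bound for constant assumpt} and tracking the numerical constants honestly, exactly mirroring the strongly monotone step-size argument of Lemma~\ref{Lemma step sizes scsc}.
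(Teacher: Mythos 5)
Your route is the same as the paper's: cancel the common $1$, divide through by $\gamma_1$ (the paper divides by $\gamma_1 n$), reduce the bracket via $A \le 2L_{max}^2$ and $L \le L_{max}$, split the budget $\tfrac{n}{2}\lambda_{min}^+(Q)$ evenly between the linear and quadratic terms, and check each against the prescribed cap; your early simplification of the bracket to $120 n^2 L_{max}^2$ versus the paper's $(10n^2+2n+50)L_{max}^2$ is cosmetic. The linear half does check out: at $\gamma_1=\frac{\lambda_{min}^+(Q)}{2\sqrt{120}\,nL_{max}^2}$ one gets $4nL_{max}^2\gamma_1=\lambda_{min}^+(Q)/\sqrt{30}\le \tfrac{n}{4}\lambda_{min}^+(Q)$.

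The gap sits in the step you yourself flag as carrying the real work, and you assert it rather than verify it. The quadratic half demands $\gamma_1\le \frac{\lambda_{min}^+(Q)}{\sqrt{960n}\,L_{max}^{3/2}}$, which scales as $L_{max}^{-3/2}$, while the cap scales as $L_{max}^{-2}$; plugging the cap into your bound $\frac{240 n^2 L_{max}^3}{\lambda_{min}^+(Q)}\gamma_1^2$ yields exactly $\frac{\lambda_{min}^+(Q)}{2L_{max}}$, which is at most $\frac{n\lambda_{min}^+(Q)}{4}$ if and only if $nL_{max}\ge 2$. So the claimed ``calibration'' fails whenever $L_{max}<2/n$, and this is not an artifact of loose constants: for $n=1$, $F_1(z)=Q_1 z$ with $Q_1=\tfrac{1}{10}I$ (so $\lambda_{min}^+(Q)=L_{max}=\tfrac{1}{10}$, $A=\tfrac{1}{50}$) and $\gamma_1$ at the cap, the left-hand side of the lemma evaluates to about $1.08$ while the right-hand side is about $0.98$ --- the stated inequality itself is false there, so no bookkeeping can close it without an extra hypothesis. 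In fairness, the paper's own proof has the identical hole: it passes from the correct constraint $\gamma_1\le\frac{\lambda_{min}^+(Q)}{2L_{max}\sqrt{2L_{max}(5n+2+\frac{50}{n})}}$ to the claim that $\gamma_1\le\frac{\lambda_{min}^+(Q)}{2L_{max}^2\sqrt{2(5n+52)}}$ suffices, which silently requires $L_{max}\ge\frac{5n+2+50/n}{5n+52}$, i.e.\ roughly $L_{max}\ge 1$. You have therefore faithfully mirrored the paper's argument, implicit normalization included; a complete proof would either add the hypothesis $nL_{max}\ge 2$ (or $L_{max}\ge 1$) or replace the cap by $\min\left\{\frac{\lambda_{min}^+(Q)}{2\sqrt{120}\,nL_{max}^2},\ \frac{\lambda_{min}^+(Q)}{\sqrt{960n}\,L_{max}^{3/2}}\right\}$.
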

\begin{proof}   
Selecting the step size $\gamma_1, \gamma_2 = 4\gamma_1$ such that 
    \begin{eqnarray}
       && 1 -\gamma_1 n(\lambda_{min}^+(Q) -4 \gamma_1 L_{max}^2) +\frac{2L_{max}\gamma_1^3}{\lambda_{min}^+(Q)}\left[10n^2L^2 + (n+25) A\right] \leq 1 - \frac{\gamma_1 n \lambda_{min}^+(Q)}{2} \nonumber \\
        &\iff& \lambda_{min}^+(Q) -4 \gamma_1 L_{max}^2 - \frac{2L_{max}\gamma_1^2}{\lambda_{min}^+(Q)n}\left[10n^2L^2 + (n+25) A\right] \geq \frac{\lambda_{min}^+(Q)}{2} \nonumber \\
       &\iff& \frac{\lambda_{min}^+(Q)}{2} -4 L_{max}^2\gamma_1 - \frac{2L_{max}\gamma_1^2}{\lambda_{min}^+(Q)n}\left[10n^2L^2 + (n+25) A\right] \geq 0 \nonumber 
    \end{eqnarray}
    Using the fact that $A = \frac{2}{n} \sum\limits_{i=0}^{n-1} L^2_i \leq 2 L_{max}^2$, it suffice to select the stepsize $\gamma_1$ such that
    \begin{eqnarray}
       \frac{\lambda_{min}^+(Q)}{2} -4 L_{max}^2\gamma_1 - \frac{2L_{max}^3\gamma_1^2}{\lambda_{min}^+(Q)n}(5n^2 + 2n +50) \geq 0 \label{stepsizes_bilinear_quadratic_equation} 
    \end{eqnarray}
    In order to get simple expressions for the step size instead of solving the quadratic inequality \eqref{stepsizes_bilinear_quadratic_equation} we select $\gamma_1$ such that
    \begin{eqnarray}
      \frac{\lambda_{min}^+(Q)}{4} - 4 L_{max}^2\gamma_1 \geq 0 &\text{and}& \frac{\lambda_{min}^+(Q)}{4} - \frac{2L_{max}^3\gamma_1^2}{\lambda_{min}^+(Q)n}(5n^2 + 2n +50) \geq 0 \nonumber \\
      \iff \gamma_1 \leq \frac{\lambda_{min}^+(Q)}{16 L_{max}^2} &\text{and}& \gamma_1 \leq \frac{\lambda_{min}^+(Q)}{2L_{max}\sqrt{2L_{max}(5n + 2 +\frac{50}{n})}} \nonumber
    \end{eqnarray}
    Thus, the above two constraints are satisfied for stepsize   
    \begin{eqnarray}
        \gamma_1 \leq \frac{\lambda_{min}^+(Q)}{2L^2_{max}\sqrt{2(5n + 52)}} \nonumber
    \end{eqnarray}
    Combining, lastly, the requirement that $\gamma_1 \leq \frac{1}{3L_{max} \sqrt{2 n(n-1)}}$, we have that
    \begin{eqnarray}
        \gamma_{1} &=& \min\left\{ \frac{\lambda_{min}^+(Q)}{2L^2_{max}\sqrt{2(5n + 52)}}, \frac{1}{3L_{max} \sqrt{2 n(n-1)}}\right\} \nonumber \\
        &\leq& \min\left\{ \frac{\lambda_{min}^+(Q)}{2L^2_{max}\sqrt{120n}}, \frac{1}{3\sqrt{2}n L_{max}}\right\} \nonumber\\
        &\leq& \frac{\lambda_{min}^+(Q)}{2\sqrt{120} nL^2_{max}} \nonumber
    \end{eqnarray}
    Thus, in order for the inequality in the statement of the Lemma to hold, 
    it suffices to select the step size $\gamma_2 = 4 \gamma_1 , \gamma_1 \in \left(0, \gamma_{1, max}\right]$, 
    where $\gamma_{1, max} = \frac{\lambda_{min}^+(Q)}{2\sqrt{120} nL^2_{max}}$.
  \end{proof}

\subsubsection{Proof of Theorem \ref{thm-bil-1}}
\label{app thm-bil-1}
\begin{proof}
Let the step size satisfy $\gamma_2 = \alpha \gamma_1, \alpha > 0$. 
We note that, due to the closed form expression \eqref{F_w_in_bilinear_games} of the operator $F$, we have that the following hold
\begin{eqnarray}
    F_{\pi_i^k}(\Bar{z}^k_i) &=&  Q_{\pi_i^k} (I - \alpha\gamma_1 Q_{\pi_i^k}) z_i^k + (I - \alpha\gamma_1  Q_{\pi_i^k})b_{\pi_i^k} \label{F_w_bar_update}\\
    F(\hat{z}) &=& Q (I - \alpha\gamma_1 Q) z + (I - \alpha\gamma_1 Q)b \label{F_w_hat_update} \\
    F(z^*) = 0 &\iff& Qz^* = - b \label{F_w_star_equals_0_bil}
\end{eqnarray}
\textbf{Proof of Inequality \eqref{res: affine1}.} We have that:
\begin{eqnarray}
	   z_0^{k+1} &\coloneqq&z_{n}^k\nonumber\\ &\stackrel{\eqref{SEG_extrapolation_step}}{=}& z_{n-1}^k - \gamma_1 F_{\pi_{n-1}^k}(\Bar{z}_{n-1}^k) \nonumber\\
		&=\;& z_{0}^k -\gamma_1 \sum_{i=0}^{n-1} F_{\pi_i^k}(\Bar{z}_{i}^k) \quad \label{init2} 
\end{eqnarray}
Subtracting $z^\star$ from both sides of \eqref{init2} and taking the norm, we get:
\begin{eqnarray}
\|z_0^{k+1} - z^*\|^2
    &=&\left\| z_0^k - z^* -\gamma_1 \sum_{i=0}^{n-1} F_{\pi_i^k}(\Bar{z}_{i}^k)\right\|^2 \nonumber \\
    &\stackrel{\eqref{F_w_bar_update}}{=}& \left\| z_0^k - z^* -\gamma_1 \sum_{i=0}^{n-1} \left[Q_{\pi_i^k}(I - \gamma_2 Q_{\pi_i^k}) z^k_i + (I-\gamma_2 Q_{\pi_i^k}) b_{\pi_i^k}\right] \right\|^2 \nonumber \\
    &\stackrel{}{=}& \Big\| \left[I - \gamma_1 \sum_{i=0}^{n-1} Q_{\pi_i^k}(I - \gamma_2 Q_{\pi_i^k}) \right](z_0^k - z^*) \nonumber \\ 
    && \quad - \gamma_1 \sum_{i=0}^{n-1} \left[Q_{\pi_i^k}(I - \gamma_2 Q_{\pi_i^k}) (z^k_i -z^k_0 +z^*) + (I-\gamma_2 Q_{\pi_i^k}) b_{\pi_i^k}\right]\Big\|^2 \nonumber
\end{eqnarray}
where in the last step we have added and subtracted the term $\gamma_1 \sum_{i=0}^{n-1} Q_{\pi_i^k}(I - \gamma_2 Q_{\pi_i^k})(z_0^k - z^*)$. 

\newpage
Using Young's inequality \eqref{Youngforbil} with $t = 1 -\gamma_1 n(\lambda_{min}^+(Q) - \gamma_2 L_{max}^2) \in (0,1)$, we obtain:
\begin{eqnarray}
\|z_0^{k+1} - z^*\|^2 &\stackrel{\eqref{Youngforbil}}{\leq}& \frac{\left\|I - \gamma_1 \sum_{i=0}^{n-1} Q_{\pi_i^k}(I - \gamma_2 Q_{\pi_i^k})\right\|^2 \|z_0^k - z^*\|^2}{[1 -\gamma_1 n(\lambda_{min}^+(Q) - \gamma_2 L_{max}^2)]} \nonumber \\
&& +\frac{\left\| \gamma_1 \sum_{i=0}^{n-1} \left[Q_{\pi_i^k}(I - \gamma_2 Q_{\pi_i^k}) (z^k_i -z^k_0 +z^*) + (I-\gamma_2 Q_{\pi_i^k}) b_{\pi_i^k}\right]\right\|^2}{\gamma_1n (\lambda_{min}^+(Q) -\gamma_2 L_{max}^2)}\nonumber \\ 
&\leq& [1 -\gamma_1 n(\lambda_{min}^+(Q) - \gamma_2 L_{max}^2)] \|z_0^k - z^*\|^2\nonumber \\
&& +\frac{\gamma_1 \left\| \sum_{i=0}^{n-1}Q_{\pi_i^k}(I - \gamma_2 Q_{\pi_i^k}) (z^k_i -z^k_0 +z^*) + (I-\gamma_2 Q_{\pi_i^k}) b_{\pi_i^k}\right\|^2}{n(\lambda_{min}^+(Q) -\gamma_2 L_{max}^2)}\nonumber
\end{eqnarray}

Taking expectation condition on the filtration $\mathcal{F}^k$, we have that 
\begin{eqnarray}
    \Expep{\|z_0^{k+1} - z^*\|^2} &\leq& [1 -\gamma_1 n(\lambda_{min}^+(Q) - \gamma_2 L_{max}^2)] \|z_0^k - z^*\|^2\nonumber \\
&& +\frac{\gamma_1 \Expep{\left\| \sum_{i=0}^{n-1}Q_{\pi_i^k}(I - \gamma_2 Q_{\pi_i^k}) (z^k_i -z^k_0 +z^*) + (I-\gamma_2 Q_{\pi_i^k}) b_{\pi_i^k}\right\|^2}}{n(\lambda_{min}^+(Q) -\gamma_2 L_{max}^2)}\label{interm_1_bil}
\end{eqnarray}

We, next, use Lemma \ref{lemma: bil_2ndTerm_bound} to bound the second norm in the RHS of \eqref{interm_1_bil}. Thus, letting $\gamma_1 \leq \frac{1}{3\sqrt{2n(n-1)}L_{max}}$ and using Lemma \ref{lemma: bil_2ndTerm_bound} into \eqref{interm_1_bil} we get 
\begin{eqnarray}
    \Expep{\|z_0^{k+1} - z^*\|^2} &\stackrel{\text{Lemma } \ref{lemma: bil_2ndTerm_bound}}{\leq}& \left[1 -\gamma_1 n(\lambda_{min}^+(Q) - \gamma_2 L_{max}^2) +\frac{2L_{max}(10n^2L^2 + (n+25) A) \gamma_1^3}{(\lambda_{min}^+(Q) -\gamma_2 L_{max}^2)}\right] \|z_0^k - z^*\|^2\nonumber \\ 
    && + \frac{2L_{max}\gamma_1}{n (\lambda_{min}^+(Q) -\gamma_2 L_{max}^2)} \left[\gamma_2^2 + 2n\gamma_1^2(n+25)\right] \sigma_*^2 \label{to_use_substitution_of_F_i_k}
\end{eqnarray}

Selecting the step size $\gamma_2 = 4 \gamma_1$, $\gamma_1\leq \frac{\lambda_{min}^+(Q)}{2\sqrt{120} nL^2_{max}}$ and using Lemma \ref{lemma: stepsizes-bilinear} we have that:
\begin{eqnarray}
     [1 -\gamma_1 n(\lambda_{min}^+(Q) - \gamma_2 L_{max}^2)] +\frac{2L_{max}\gamma_1^3}{(\lambda_{min}^+(Q) -\gamma_2 L_{max}^2)} \left[10n^2L^2 + (n+25) A\right]&\leq& 1 - \frac{\gamma_1 n \lambda_{min}^+(Q)}{2} \nonumber
\end{eqnarray} 

Thus, for the selected step size inequality \eqref{to_use_substitution_of_F_i_k} gives:
\begin{eqnarray}
\Expep{\|z^{k+1}_0 - z^*\|^2}
    &\leq& \left(1 - \frac{1}{2}\gamma_1n\lambda_{min}^+(Q)\right) \|z_{0}^k - z^*\|^2 
    + \frac{2L_{max}\gamma_1}{n\lambda_{min}^+(Q)}\left[\gamma_2^2 + 4n\gamma_1^2(n+25)\right] \sigma_*^2\quad \quad
    \label{billast}
\end{eqnarray}

Taking expectation on both sides and using the tower property of expectation we have that
\begin{eqnarray}
    \Expe{\|z_0^{k+1} - z^*\|^2 } &\leq& \left(1 - \frac{1}{2}\gamma_1n\lambda_{min}^+(Q)\right) \|z_{0}^k - z^*\|^2+\frac{2L_{max}\gamma_1}{n\lambda_{min}^+(Q)}\left[\gamma_2^2 + 4n\gamma_1^2(n+25)\right]\sigma_*^2\label{bil-after-excpe}
\end{eqnarray}
Using the concavity of the min operator and the definition of $\text{dist}(z, \mathcal{Z}_*) = \min\limits_{z^* \in \mathcal{Z}_*} \|z - z^*\|^2$, we obtain
\begin{eqnarray}
    \Expe{\text{dist}(z_0^{k+1}, \mathcal{Z}_*)} &\leq& \min_{z^* \in \mathcal{Z}_*} \Expe{\|z_0^{k+1} - z^*\|^2 } \nonumber \\
    &\leq& \left(1 - \frac{1}{2}\gamma_1n\lambda_{min}^+(Q)\right) \text{dist}(z_0^{k}, \mathcal{Z}_*)+\frac{2L_{max}\gamma_1}{n\lambda_{min}^+(Q)}\left[\gamma_2^2 + 4n\gamma_1^2(n+25)\right]\sigma_*^2
\end{eqnarray}
where $\sigma_*^2$ here denotes $\sigma_*^2 = \min\limits_{z^* \in \mathcal{Z}_*}\frac{1}{n} \sum_{i=0}^{n-1} \|F_i(z^*)\|^2$.\\
Unrolling the recursion, we conclude that
\begin{eqnarray}
     \Expe{\text{dist}(z_0^{k+1}, \mathcal{Z}_*)}
    &\leq& \left(1 - \frac{1}{2}\gamma_1n\lambda_{min}^+(Q)\right)^{k+1} \text{dist}(z_0, \mathcal{Z}_*)
    +\frac{4L_{max}}{\lambda_{min}^2(Q)n^2}\left[\gamma_2^2 + 4n(n+25)\gamma_1^2\right]\sigma_*^2  \label{final result bil1}
\end{eqnarray}
\textbf{Proof of Equation \eqref{res: affine2}.} From inequality \eqref{final result bil1}, the following holds 
\begin{eqnarray}
    \Expe{\text{dist}(z_0^{K}, \mathcal{Z}_*) } &\leq& \left(1 - \frac{1}{2}\gamma_1n\lambda_{min}^+(Q)\right)^K {\text{dist}(z_0, \mathcal{Z}_*)}
    +\frac{4L_{max}}{\lambda_{min}^2(Q)n^2}\left[\gamma_2^2 + 4n(n+25)\gamma_1^2\right]\sigma_*^2 \nonumber\\
    &\stackrel{\gamma_2 = 4\gamma_1}{\leq}& \left(1 - \frac{1}{2}\gamma_1n\lambda_{min}^+(Q)\right)^K {\text{dist}(z_0, \mathcal{Z}_*)}
    +\frac{4L_{max}}{\lambda_{min}^2(Q)n^2}\left(4n^2+25n+16\right) \gamma_1^2\sigma_*^2\nonumber \\
    &\stackrel{\eqref{ineq exponential}}{\leq}& e^{-\frac{\gamma_1nK\lambda_{min}^+(Q)}{2}} {\text{dist}(z_0, \mathcal{Z}_*)} +\frac{4L_{max}\left(4n^2+25n+16\right)}{\lambda_{min}^2(Q)n^2}\gamma_1^2\sigma_*^2 \label{intem2}
\end{eqnarray}
We substitute $\gamma_1 = \min\left\{\frac{\lambda_{min}^+(Q)}{2\sqrt{120} nL^2_{max}}, \frac{2\log (n^{1/2}K)}{\lambda_{min}^+(Q) nK}\right\} \leq \frac{2\log (n^{1/2}K)}{\lambda_{min}^+(Q) nK}$ and bound the second term in the right-hand side (RHS) of \eqref{intem2} as
\begin{eqnarray}
    \frac{4L_{max}\left(4n^2+25n+16\right)}{\lambda_{min}^2(Q)n^2}\gamma_1^2\sigma_*^2 \leq \frac{4L_{max}\left(4n^2+25n+16\right)}{{\lambda_{min}^+(Q)}^4n^2}\frac{4\log^2 (n^{1/2}K)}{n^2K^2}\sigma_*^2\label{second_term_rhs_bil}
\end{eqnarray}
Substituting \eqref{second_term_rhs_bil} into \eqref{intem2}, we obtain the following:
\begin{equation}
\Expe{\text{dist}(z_0^{K}, \mathcal{Z}_*)} \leq e^{-\frac{\gamma_1nK\lambda_{min}^+(Q)}{2}} {\text{dist}(z_0, \mathcal{Z}_*)} +\frac{4L_{max}\left(4n^2+25n+16\right)}{{\lambda_{min}^+(Q)}^4n^2}\frac{4\log^2 (n^{1/2}K)}{n^2K^2}\sigma_*^2\label{second_term_edited_bil}
\end{equation}
We now consider the following cases:
\paragraph{Case 1: $\frac{\lambda_{min}^+(Q)}{2\sqrt{120} nL^2_{max}}\leq \frac{2\log (n^{1/2}K)}{\lambda_{min}^+(Q) nK}$} In this case we have that $\gamma_1 = \frac{\lambda_{min}^+(Q)}{2\sqrt{120} nL^2_{max}}$, which implies that the RHS of \eqref{second_term_edited_bil} is bounded by 
\begin{eqnarray}
    && e^{-\frac{\gamma_1nK\lambda_{min}^+(Q)}{2}} {\text{dist}(z_0, \mathcal{Z}_*)} +\frac{4L_{max}\left(4n^2+25n+16\right)}{{\lambda_{min}^+(Q)}^4n^2}\frac{4\log^2 (n^{1/2}K)}{n^2K^2}\sigma_*^2 \nonumber \\ 
    &\leq& e^{-\frac{K\lambda_{min}^2(Q)}{4\sqrt{120}L^2_{max}}} {\|z_0- z^*\|^2} +\frac{4L_{max}\left(4n^2+25n+16\right)}{{\lambda_{min}^+(Q)}^4n^2}\frac{4\log^2 (n^{1/2}K)}{n^2K^2}\sigma_*^2 \label{case1_bil}
\end{eqnarray}

\paragraph{Case 2: $\frac{2\log (n^{1/2}K)}{\lambda_{min}^+(Q) nK} \leq \frac{\lambda_{min}^+(Q)}{2\sqrt{120} nL^2_{max}}$}
In this case we have that $\gamma_1 = \frac{2\log (n^{1/2}K)}{\lambda_{min}^+(Q) nK}$, which implies that the RHS of \eqref{second_term_edited_bil} is bounded by 
\begin{eqnarray}
    && e^{-\frac{\gamma_1nK\lambda_{min}^+(Q)}{2}} {\text{dist}(z_0, \mathcal{Z}_*)} +\frac{4L_{max}\left(4n^2+25n+16\right)}{{\lambda_{min}^+(Q)}^4n^2}\frac{4\log^2 (n^{1/2}K)}{n^2K^2}\sigma_*^2 \nonumber \\ 
    &\leq& \frac{1}{nK^2} {\|z_0- z^*\|^2} +\frac{4L_{max}\left(4n^2+25n+16\right)}{{\lambda_{min}^+(Q)}^4n^2}\frac{4\log^2 (n^{1/2}K)}{n^2K^2}\sigma_*^2 \label{case2_bil}
\end{eqnarray} 
Taking the maximum of the right-hand side of \eqref{case1_bil} and \eqref{case2_bil} and using the inequality $\max\{ a, b\} \leq a + b$, we obtain the desired result which holds for both cases:
\begin{equation}
  \Expe{\text{dist}(z_0^{K}, \mathcal{Z}_*) } \leq  e^{-\frac{K\lambda_{min}^2(Q)}{4\sqrt{120}L^2_{max}}} {\text{dist}(z_0, \mathcal{Z}_*)} + \frac{1}{nK^2}{\|z_0- z^*\|^2} +2 \frac{4L_{max}\left(4n^2+25n+16\right)}{{\lambda_{min}^+(Q)}^4n^2}\frac{4\log^2 (n^{1/2}K)}{n^2K^2}\sigma_*^2 \nonumber  
\end{equation}
Suppressing constant and logarithmic terms, we get the final result
\begin{eqnarray}
\Expe{\text{\text{dist}}(z_0^{K}, \mathcal{Z}_*)}&=& \Tilde{\mathcal{O}}\left( e^{\frac{K\lambda_{min}^2(Q)}{4\sqrt{120}L^2_{max}}} + \frac{1}{{nK^2}}\right)\nonumber
\end{eqnarray}
\end{proof}

\subsection{Proofs for Monotone Case}
In Section \ref{aoskdnaosdal}, we start by proving three lemmas that will be useful in the proof of the main theorem for the monotone case. Lemma \ref{Lemma: my0} provides a bound on the iterates of \ref{eq:SEG-RR}, Lemma \ref{Lemma: new iterates in epoch} bounds the distance of the iterates inside an epoch and Lemma \ref{Lemma: new my} bounds a term that appears in the proof of Theorem \ref{theorem monotone my}. Next, in Section \ref{theorem monotone app my}, we have the proof of Theorem \ref{theorem monotone my}. Lastly, in Section \ref{sec: iter_comp_seg_rr}, we provide the iteration complexity result of \ref{eq:SEG-RR} that allows us to obtain the desired $\mathcal{O}\left(\frac{1}{\sqrt{nK}}\right)$ after a certain number of epochs.

\subsubsection{Lemmas for Monotone Case}
\label{aoskdnaosdal}

\begin{lemma}\label{Lemma: my0}
    Suppose that $F$ is monotone and each $F_i, \forall i \in [n],$ is $L_i-$Lipschitz. 
    Then, for any $z^* \in \mathcal{Z}_*$ it holds that 
    \begin{eqnarray}
        \|z_0^{k+1} - z^*\|^2 &\leq& (1+2 \gamma_1 n L^2+2 \gamma_1 L_{max}) \|z_0^k-z^*\|^2 \nonumber \\
        && - n\left[2\gamma_2^2n(1 - \gamma_1 n L^2-\gamma_1 L_{max}) - 3\left(\gamma_2 n + 1\right)^2 \gamma_1\right] \|F(z_0^k)\|^2\nonumber \\ 
      && - (2 - \gamma_1 L_{max}) \|z_0^{k+1} -\Tilde{z}^k_0\|^2 + \gamma_1\left[3n\gamma_1^2(1 + L^2)+\gamma_1+\frac{2}{L_{max}}\right]\left\|\sum\limits_{i=0}^{n-1} F_{\pi^k_i}(\Bar{z}_i^k)\right\|^2\nonumber
    \end{eqnarray}
\end{lemma}
\begin{proof}
  From equation \eqref{init2}, we have that 
  \begin{eqnarray}
      z_0^{k+1} &=& z_0^k - \gamma_1 \sum_{i=0}^{n-1} F_{\pi^k_i}(\Bar{z}_i^k)\label{my0},
  \end{eqnarray}
  Let us define $\Tilde{z}^k_0$ to be the following expression: 
  \begin{eqnarray}
      \Tilde{z}^k_0 &\triangleq& z_0^k - \gamma_2 n F(z_0^k) \label{z_tilde def}.
  \end{eqnarray}
  which we will use later in different parts of our proof. 

  Let $z^* \in \mathcal{Z}_*$. Then from the expressions of $z_0^{k+1}$ and $\Tilde{z}^k_0$ in \eqref{my0}, and \eqref{z_tilde def} respectively, we are able to obtain the following:  

  \begin{eqnarray}
    0 & =& \langle z_0^{k+1}-z_0^k + \gamma_1 \sum_{i=0}^{n-1} F_{\pi^k_i}(\Bar{z}_i^k),z^*-z_0^{k+1} \rangle \label{eq_1a}
       \\
    0 & =& \langle \Tilde{z}^k_0 -z_0^k + \gamma_2n F(z_0^k),z_0^{k+1}-\Tilde{z}^k_0\rangle \label{eq_1b}
  \end{eqnarray}

  In addition, since the operator $F$ is monotone it holds that: 
  \begin{eqnarray}
    \gamma_1 n \langle F(z^{k+1}_0), \Tilde{z}^k_0 - z^{k+1}_0\rangle &\leq& \gamma_1 n \langle F(\Tilde{z}^k_0), \Tilde{z}^k_0 - z^{k+1}_0\rangle\label{eq_1c}
  \end{eqnarray}

  By summing these three relations \eqref{eq_1a}, \eqref{eq_1b}, \eqref{eq_1c} together and rearranging the terms, we get
  \begin{eqnarray*}
    \gamma_1 n \langle F(z^{k+1}_0), \Tilde{z}^k_0 - z^{k+1}_0\rangle &\leq&  \langle z_0^{k+1}-z_0^k,z^*-z_0^{k+1}\rangle +\gamma_1 \langle \sum_{i=0}^{n-1} F_{\pi^k_i}(\Bar{z}_i^k), \Tilde{z}^k_0-z_0^{k+1}\rangle + \gamma_1 \langle \sum_{i=0}^{n-1} F_{\pi^k_i}(\Bar{z}_i^k),z^*-\Tilde{z}^k_0\rangle \nonumber \\ 
    && +\langle \Tilde{z}^k_0-z_0^k,
    z_0^{k+1}-\Tilde{z}^k_0\rangle - \gamma_2 n \langle F(z_0^k), \Tilde{z}^k_0-z_0^{k+1}\rangle \nonumber \\
    && + \gamma_1 n \langle F(\Tilde{z}^k_0), \Tilde{z}^k_0 - z^{k+1}_0\rangle
  \end{eqnarray*}
  Now, by multiplying both sides with $2$ and rearranging, we have
  \begin{eqnarray*}
    2\gamma_1 n \langle F(z^{k+1}_0), \Tilde{z}^k_0 - z^{k+1}_0\rangle &\leq& 2 \langle z_0^{k+1}-z_0^k,z^* -z_0^{k+1}\rangle + 2 \langle \Tilde{z}^k_0-z_0^k,
    z_0^{k+1}-\Tilde{z}^k_0\rangle \nonumber \\
    &&+ 2\gamma_1 \langle \sum_{i=0}^{n-1} F_{\pi^k_i}(\Bar{z}_i^k), \Tilde{z}^k_0-z_0^{k+1}\rangle
     -2 \gamma_2 n \langle F(z_0^k), \Tilde{z}^k_0-z_0^{k+1}\rangle \nonumber \\ 
     && -2 \gamma_1 \langle \sum_{i=0}^{n-1} F_{\pi^k_i}(\Bar{z}_i^k),\Tilde{z}^k_0 - z^*\rangle + 2\gamma_1 n \langle F(\Tilde{z}^k_0), \Tilde{z}^k_0 - z^{k+1}_0\rangle
  \end{eqnarray*}
  Using the identity $2\langle a,b\rangle = \| a+b\|^2 - \|a\|^2-\|b\|^2$ for the inner products $\langle z_0^{k+1}-z_0^k,z^*-z_0^{k+1}\rangle$ and $\langle \Tilde{z}^k_0-z_0^k,
    z_0^{k+1}-\Tilde{z}^k_0\rangle$:
  \begin{eqnarray}
    2\gamma_1 n \langle F(z^{k+1}_0), \Tilde{z}^k_0 - z^{k+1}_0\rangle &\leq& - \|z_0^{k+1} - z^*\|^2 - \|z_0^{k+1}-z_0^k\|^2 + \| z^k_0 - z^*\|^2 \nonumber \\
    && - \|z_0^k-\Tilde{z}^k_0\|^2- \|z_0^{k+1} -\Tilde{z}^k_0\|^2 + \|z_0^{k+1}-z_0^k\|^2\nonumber\\
      && +2\gamma_1 \langle \sum_{i=0}^{n-1} F_{\pi^k_i}(\Bar{z}_i^k), \Tilde{z}^k_0-z_0^{k+1}\rangle - 2n \gamma_2 \langle F(z_0^k), \Tilde{z}^k_0-z_0^{k+1}\rangle \nonumber \\
      && -2\gamma_1 \langle \sum_{i=0}^{n-1} F_{\pi^k_i}(\Bar{z}_i^k),\Tilde{z}^k_0-z^*\rangle + 2 \gamma_1 n \langle F(\Tilde{z}^k_0), \Tilde{z}^k_0 - z^{k+1}_0\rangle\nonumber
  \end{eqnarray}
  By bringing the term $-\|z_0^{k+1} - z^*\|^2$ on the left-hand side,
  \begin{eqnarray}
    \|z_0^{k+1} - z^*\|^2 + 2\gamma_1 n \langle F(z^{k+1}_0), \Tilde{z}^k_0 - z^{k+1}_0\rangle &\leq& \|z_0^k-z^*\|^2  - \|z_0^k-\Tilde{z}^k_0\|^2- \|z_0^{k+1} -\Tilde{z}^k_0\|^2\nonumber\\
      && +2\gamma_1 \langle \sum_{i=0}^{n-1} F_{\pi^k_i}(\Bar{z}_i^k), \Tilde{z}^k_0-z_0^{k+1}\rangle - 2n \gamma_2 \langle F(z_0^k), \Tilde{z}^k_0-z_0^{k+1}\rangle \nonumber \\
      && -2\gamma_1 \langle \sum_{i=0}^{n-1} F_{\pi^k_i}(\Bar{z}_i^k),\Tilde{z}^k_0-z^*\rangle + 2 \gamma_1 n \langle F(\Tilde{z}^k_0), \Tilde{z}^k_0 - z^{k+1}_0\rangle \label{my1}
  \end{eqnarray}

  Using inequality \eqref{ineq inner product}, we can lower bound the left hand-side of \eqref{my1} as follows:
  \begin{eqnarray}
      \|z_0^{k+1} - z^*\|^2 + 2\gamma_1 n \langle F(z^{k+1}_0), \Tilde{z}^k_0 - z^{k+1}_0\rangle &\stackrel{\eqref{ineq inner product}}{=}& \|z_0^{k+1} - z^*\|^2 + \gamma_1 n \| F(z^{k+1}_0)\|^2 + \gamma_1 n \|\Tilde{z}^k_0 - z^{k+1}_0\|^2\nonumber \\
      && - \gamma_1 n \|F(z^{k+1}_0) + z^{k+1}_0 -\Tilde{z}^k_0\|^2\nonumber \\
      &\geq& \|z_0^{k+1} - z^*\|^2 + \gamma_1 n \|\Tilde{z}^k_0 - z^{k+1}_0\|^2 \nonumber\\
      && - \gamma_1 n \|F(z^{k+1}_0) + z^{k+1}_0 -\Tilde{z}^k_0\|^2 \label{my2}
  \end{eqnarray}
  By combining \eqref{my1} with \eqref{my2} and rearranging the terms, we obtain 
  \begin{eqnarray}
      \|z_0^{k+1} - z^*\|^2 &\leq& \|z_0^k-z^*\|^2  - \|z_0^k-\Tilde{z}^k_0\|^2-(1+\gamma_1 n) \|z_0^{k+1} -\Tilde{z}^k_0\|^2 + \gamma_1 n \|F(z^{k+1}_0) + z^{k+1}_0 - \Tilde{z}^k_0\|^2\nonumber\\
      && +2\gamma_1 \langle \sum_{i=0}^{n-1} F_{\pi^k_i}(\Bar{z}_i^k), \Tilde{z}^k_0-z_0^{k+1}\rangle - 2n\gamma_2 \langle F(z_0^k), \Tilde{z}^k_0-z_0^{k+1}\rangle \nonumber \\
      && -2\gamma_1 \langle \sum_{i=0}^{n-1} F_{\pi^k_i}(\Bar{z}_i^k),\Tilde{z}^k_0-z^*\rangle+2 \gamma_1 n \langle F(\Tilde{z}^k_0), \Tilde{z}^k_0 - z^{k+1}_0\rangle\label{my3}
  \end{eqnarray}
  Now using $z^k_0 - \Tilde{z}^k_0 \stackrel{\eqref{z_tilde def}}{=} \gamma_2 n F(z^k_0)$, we get
  \begin{eqnarray}
      \|z_0^{k+1} - z^*\|^2 &\leq& \|z_0^k-z^*\|^2  - \gamma_2^2n^2 \|F(z_0^k)\|^2 - (1+\gamma_1 n) \|z_0^{k+1} -\Tilde{z}^k_0\|^2 \nonumber\\
      && +\underbrace{2\gamma_1 \langle \sum_{i=0}^{n-1} F_{\pi^k_i}(\Bar{z}_i^k), \Tilde{z}^k_0-z_0^{k+1}\rangle}_{T_1} - \underbrace{2n\gamma_2 \langle F(z_0^k), \Tilde{z}^k_0-z_0^{k+1}\rangle}_{T_2} \nonumber \\
      && -2\underbrace{\gamma_1 \langle \sum_{i=0}^{n-1} F_{\pi^k_i}(\Bar{z}_i^k),\Tilde{z}^k_0-z^*\rangle}_{T_3} + \underbrace{2 \gamma_1 n \langle F(\Tilde{z}^k_0), \Tilde{z}^k_0 - z^{k+1}_0\rangle}_{T_4} \nonumber \\ 
      && + \underbrace{\gamma_1 n \|F(z^{k+1}_0) + z^{k+1}_0 -\Tilde{z}^k_0\|^2}_{T_5}\label{my3_a}
  \end{eqnarray}
  At this stage, we aim to obtain bounds for the five terms $T_1, T_2, T_3, T_4, T_5$ on the right-hand side of \eqref{my3_a}. 
  
\begin{eqnarray}
  T_1 
  &\leq& \frac{\gamma_1}{L_{max}}\left\|\sum_{i=0}^{n-1} F_{\pi^k_i}(\Bar{z}_i^k)\right\|^2 + \gamma_1 L_{max}\|\Tilde{z}^k_0-z_0^{k+1}\|^2 \label{my4}
\end{eqnarray}

\begin{eqnarray}
    T_2 &\stackrel{\eqref{ineq inner product}}{=}& -n^2 \gamma_2^2 \| F(z_0^k)\|^2 - \|\Tilde{z}^k_0-z_0^{k+1}\|^2 + \|\gamma_2 n F(z_0^k)+\Tilde{z}^k_0-z_0^{k+1}\|^2 \nonumber\\
    &\stackrel{\eqref{z_tilde def}}{=}& -n^2 \gamma_2^2 \| F(z_0^k)\|^2 - \|\Tilde{z}^k_0-z_0^{k+1}\|^2 + \|z^k_0-z_0^{k+1}\|^2 \nonumber \\
    &\stackrel{\eqref{my0}}{=}& -n^2 \gamma_2^2 \| F(z_0^k)\|^2 - \|\Tilde{z}^k_0-z_0^{k+1}\|^2 + \gamma_1^2 \left\|\sum\limits_{i=0}^{n-1} F_{\pi^k_i}(\Bar{z}_i^k)\right\|^2 \label{my5}
\end{eqnarray}

\begin{eqnarray}
 T_3 &\leq& \frac{\gamma_1}{L_{max}}\left\|\sum_{i=0}^{n-1} F_{\pi^k_i}(\Bar{z}_i^k)\right\|^2 + \gamma_1 L_{max}\|\Tilde{z}^k_0-z^*\|^2 \nonumber \\
&\stackrel{\eqref{z_tilde def}}{=}& \frac{\gamma_1}{L_{max}}\left\|\sum_{i=0}^{n-1} F_{\pi^k_i}(\Bar{z}_i^k)\right\|^2 + \gamma_1 L_{max}\|z^k_0-\gamma_2 n F(z_0^k) -z^*\|^2 \nonumber\\
&\stackrel{\eqref{ineq1a}}{\leq}& \frac{\gamma_1}{L_{max}}\left\|\sum_{i=0}^{n-1} F_{\pi^k_i}(\Bar{z}_i^k)\right\|^2 +2 \gamma_1 L_{max}\|z^k_0-z^*\|^2 + 2\gamma_2^2 n^2\gamma_1 L_{max} \|F(z_0^k)\|^2 \label{my6}
\end{eqnarray}

\begin{eqnarray}
     T_4 &\stackrel{\eqref{ineq inner product}}{\leq}& \gamma_1 n \|F(\Tilde{z}^k_0)\|^2 + \gamma_1 n\|\Tilde{z}^k_0 - z^{k+1}_0\|^2 \nonumber\\
     &\stackrel{\eqref{DefLipschitz}}{\leq}& \gamma_1 n L^2 \|\Tilde{z}^k_0 - z^*\|^2 + \gamma_1 n\|\Tilde{z}^k_0 - z^{k+1}_0\|^2\nonumber\\
     &\stackrel{\eqref{z_tilde def}}{=}& \gamma_1 n L^2 \|z^k_0 -\gamma_2 nF(z^k_0) - z^*\|^2 + \gamma_1 n\|\Tilde{z}^k_0 - z^{k+1}_0\|^2\nonumber\\
     &\stackrel{\eqref{ineq1a}}{\leq}& 2 \gamma_1 n L^2 \|z_0^k - z^*\|^2 +2\gamma_1 n^3L^2 \gamma_2^2 \|F(z^k_0)\|^2 + \gamma_1 n\|\Tilde{z}^k_0 - z^{k+1}_0\|^2\label{myy}
\end{eqnarray}

\begin{eqnarray}
   T_5 &\stackrel{\eqref{z_tilde def}}{=}& \gamma_1 n \|F(z^{k+1}_0) - F(z^k_0) + F(z^k_0) +z^{k+1}_0 - z^k_0 + \gamma_2 n F(z^k_0)\|^2 \nonumber \\
   &\stackrel{\eqref{ineq1a}}{\leq}& 3 \gamma_1 n \|F(z^{k+1}_0) - F(z^k_0)\|^2 + 3 \gamma_1 n (\gamma_2 n + 1)^2 \|F(z^k_0)\|^2 +3\gamma_1n \|z^k_0 - z^{k+1}_0\|^2\nonumber\\
   &\stackrel{\eqref{DefLipschitz}}{\leq}& 3n\gamma_1 (1 + L^2) \|z^k_0 - z^{k+1}_0\|^2+3n \gamma_1 (\gamma_2 n + 1)^2 \|F(z^k_0)\|^2\nonumber\\
   &\stackrel{\eqref{my0}}{=}& 3n\gamma_1^3 (1 + L^2) \left\|\sum\limits_{i=0}^{n-1} F_{\pi^k_i}(\Bar{z}_i^k)\right\|^2+3n \gamma_1 (\gamma_2 n + 1)^2 \|F(z^k_0)\|^2 \label{my9}
\end{eqnarray}

Substituting the upper bounds \eqref{my4}, \eqref{my5}, \eqref{my6}, \eqref{myy}, \eqref{my9} into \eqref{my3_a}, we get 
\begin{eqnarray}
\|z_0^{k+1} - z^*\|^2 &\leq& \|z_0^k-z^*\|^2  - \gamma_2^2n^2 \|F(z_0^k)\|^2 - (1+\gamma_1 n) \|z_0^{k+1} -\Tilde{z}^k_0\|^2 \nonumber\\
      && +\frac{\gamma_1}{L_{max}} \left\|\sum_{i=0}^{n-1}F_{\pi^k_i}(\Bar{z}_i^k)\right\|^2 + \gamma_1 L_{max}\|\Tilde{z}^k_0-z_0^{k+1}\|^2\nonumber \\ 
      && -n^2 \gamma_2^2 \| F(z_0^k)\|^2 - \|\Tilde{z}^k_0-z_0^{k+1}\|^2 + \gamma_1^2 \left\|\sum\limits_{i=0}^{n-1} F_{\pi^k_i}(\Bar{z}_i^k)\right\|^2\nonumber \\
      && + \frac{\gamma_1}{L_{max}}\left\|\sum_{i=0}^{n-1} F_{\pi^k_i}(\Bar{z}_i^k)\right\|^2 +2 \gamma_1 L_{max}\|z^k_0-z^*\|^2 + 2\gamma_2^2 n^2\gamma_1 L_{max} \|F(z_0^k)\|^2 \nonumber \\
      && +2 \gamma_1 n L^2 \|z_0^k - z^*\|^2 +2\gamma_1 n^3L^2 \gamma_2^2 \|F(z^k_0)\|^2 + \gamma_1 n\|\Tilde{z}^k_0 - z^{k+1}_0\|^2\nonumber \\
      && +3n\gamma_1^3 (1 + L^2) \left\|\sum\limits_{i=0}^{n-1} F_{\pi^k_i}(\Bar{z}_i^k)\right\|^2+3n \gamma_1 (\gamma_2 n + 1)^2 \|F(z^k_0)\|^2\nonumber \\
      &=& (1+2 \gamma_1 n L^2+2 \gamma_1 L_{max}) \|z_0^k-z^*\|^2 \nonumber\\
      && - n\left[2\gamma_2^2n(1 - \gamma_1 n L^2-\gamma_1 L_{max}) - 3\gamma_1 \left(\gamma_2 n + 1\right)^2\right] \|F(z_0^k)\|^2\nonumber \\ 
      && - (2 - \gamma_1 L_{max}) \|z_0^{k+1} -\Tilde{z}^k_0\|^2 + \gamma_1\left[3n\gamma_1^2(1 + L^2)+\gamma_1+\frac{2}{L_{max}}\right]\left\|\sum\limits_{i=0}^{n-1} F_{\pi^k_i}(\Bar{z}_i^k)\right\|^2\label{last_eq_of_Lemma: my0}
\end{eqnarray}
\end{proof}

\begin{lemma}\label{Lemma: new iterates in epoch}
 Assume that each $F_i, i\in [n],$ is $L_i-$Lipschitz, the step sizes of \ref{eq:SEG-RR} satisfy $\gamma_1\leq \frac{1}{3\sqrt{2n(n-1)}L_{max}}$, $ \gamma_2 \leq \frac{1}{ L_{max}}$ and $z^* \in \mathcal{Z}_*$.
    The iterates of the \ref{eq:SEG-RR} algorithm satisfy the following bound 
    \begin{eqnarray}
        \Expep{\frac{1}{n} \sum\limits_{j=0}^{n-1} \norm{z_{j}^k - z_0^k}^2}&\stackrel{}{\le}& \gamma_1^2 \left[18n^2L^2 + A (24n^2-23n+1)\right]\norm{z_0^k - z^*}^2 + 2\gamma_1^2(24n^2-23n+1)\sigma_*^2 \quad \label{EG-bound123}
    \end{eqnarray}   
\end{lemma}
\begin{proof}
    Following the same steps as in Lemma \ref{Lemma: squared-norms-epoch-iterates}, we get from inequality \eqref{bound_on_G_k2} that for step sizes satisfying $\gamma_1 < \frac{1}{3\sqrt{n(n-1)}L_{max}}, \gamma_2 \leq \frac{1}{L_{max}}$ and for $z^* \in \mathcal{Z}_*,$ the following holds
  \begin{eqnarray}
     \Expep{\frac{1}{n} \sum\limits_{j=0}^{n-1} \norm{z_{j}^k - z_0^k}^2}&\leq&\frac{D\gamma_1^2}{D_1} \norm{F(z_0^k)}^2 + \frac{A\gamma_1^2}{D_1} \left[\frac{n+1}{2} + 12 \gamma_2^2 L_{max}^2n(n-1)\right] \norm{z_0^k - z^*}^2 \nonumber \\
    && +\frac{\gamma_1^2}{D_1} \left[(n+1) +24 \gamma_2^2L_{max}^2n(n-1)\right] \sigma_*^2 \label{bound_on_G_k123}
 \end{eqnarray}
 where $D = \frac{(1+8\gamma_2^2 L_{max}^2)(n-1)(2n-1)}{2}, D_1 = 1 - 9n(n-1)\gamma_1^2 L_{max}^2$. \\
 
 Selecting $\gamma_1 \leq \frac{1}{3L_{max}\sqrt{2n(n-1)}}$ we have that:
\begin{eqnarray}
\frac{1}{D_1} \leq 2 \label{bound-for-D_1-2nd-term1}
\end{eqnarray}
We, also, have that for $\gamma_2 \leq \frac{1}{L_{max}}$ we can upper bound $D$ as follows 
\begin{eqnarray}
  D \leq 9 n^2 \label{bound-for-D-new-2nd-t123}
\end{eqnarray}
Substituting the bounds \eqref{bound-for-D_1-2nd-term1}, \eqref{bound-for-D-new-2nd-t123} to \eqref{bound_on_G_k123}, we obtain
\begin{eqnarray}
    \Expep{\frac{1}{n} \sum\limits_{j=0}^{n-1} \norm{z_{j}^k - z_0^k}^2}&\stackrel{\eqref{bound-for-D_1-2nd-term1}, \eqref{bound-for-D-new-2nd-t123}}{\leq}& 18n^2 \gamma_1^2 \norm{F(z_0^k)}^2  + 2A\gamma_1^2 \left[\frac{n+1}{2} + 12 \gamma_2^2 L_{max}^2n(n-1)\right] \norm{z_0^k - z^*}^2 \nonumber \\
        && + 2\gamma_1^2 \left[(n+1) +24 \gamma_2^2L_{max}^2n(n-1)\right] \sigma_*^2 \label{Lemma: per epoch iterates without gamma2} \\
        &\stackrel{\gamma_2 \leq \frac{1}{L_{max}}}{\leq}& 18n^2\gamma_1^2 \norm{F(z_0^k)}^2 + A\gamma_1^2 (24n^2-23n+1) \norm{z_0^k - z^*}^2 \nonumber \\
        && + 2\gamma_1^2 (24n^2-23n+1)\sigma_*^2 \nonumber \\
        &\stackrel{\eqref{DefLipschitz}}{\leq}& \gamma_1^2 \left[18n^2L^2 + A (24n^2-23n+1)\right]\norm{z_0^k - z^*}^2 + 2\gamma_1^2(24n^2-23n+1)\sigma_*^2\nonumber
\end{eqnarray}
\end{proof}

\begin{lemma}\label{Lemma: new my}
    Assume that each $F_i, i\in [n],$ is $L_i-$Lipschitz, the step sizes of \ref{eq:SEG-RR} satisfy $\gamma_1\leq \frac{1}{3\sqrt{2}nL_{max}}$, $ \gamma_2 \leq \frac{1}{ L_{max}}$ and $z^* \in \mathcal{Z}_*$.
    The iterates of the \ref{eq:SEG-RR} algorithm satisfy the following bound
    \begin{eqnarray}
    \Expep{\gamma_1 C \left\|\sum_{i=0}^{n-1} F_{\pi^k_i}(\Bar{z}_i^k) \right\|^2} &\leq& 20n^2 \gamma_1 C(11L^2 + 15A) \norm{z_0^k - z^*}^2 + 4 n^2 C\gamma_1 \gamma_2^2 L^2 \|F(z_0^k)\|^2 \nonumber \\
    && + 12CL_{max}^2\gamma_1n^2 \left[2\gamma_1^2 (24n^2-23n+1) + \gamma_2^2\right] \sigma_*^2\nonumber
\end{eqnarray}
where $C = \frac{3(1 + L^2+L_{max})}{L^2_{max}}$. 
\end{lemma}
\begin{proof}
   For brevity, let $T = \gamma_1 C \left\|\sum_{i=0}^{n-1} F_{\pi^k_i}(\Bar{z}_i^k) \right\|^2$. Due to the finite-sum property in the definition of the VIP \eqref{VI-problem}, we have that for any permutation $\pi^k$ of $[n]$ it holds that 
   \begin{eqnarray}
       n F(\Hat{z}^k_0) = \sum_{i=0}^{n-1} F_{\pi^k_i}(\Hat{z}^k_0).
   \end{eqnarray}
   Thus, in order to upper bound $T$ we first add and subtract the term $n F(\Hat{z}^k_0) = \sum_{i=0}^{n-1} F_{\pi^k_i}(\Hat{z}^k_0)$ and use inequality \eqref{ineq1} to get
\begin{eqnarray}
    T &=& \gamma_1 C \left\|\sum_{i=0}^{n-1} \left(F_{\pi^k_i}(\Bar{z}_i^k) - F_{\pi^k_i}(\Hat{z}_0^k)\right) + n F(\Hat{z}^k_0)\right\|^2\nonumber \\
    &\stackrel{\eqref{ineq1}}{\leq}& 2\gamma_1C \left\|\sum_{i=0}^{n-1} \left(F_{\pi^k_i}(\Bar{z}_i^k) - F_{\pi^k_i}(\Hat{z}_0^k)\right)\right\|^2  + 2 n^2C\gamma_1 \|F(\Hat{z}^k_0)\|^2 \nonumber \\
    &\stackrel{\eqref{ineq1}}{\leq}& 2Cn\gamma_1 \sum_{i=0}^{n-1}\left\| F_{\pi^k_i}(\Bar{z}_i^k) - F_{\pi^k_i}(\Hat{z}_0^k)\right\|^2 + 2 n^2 C\gamma_1 \|F(\Hat{z}^k_0)\|^2 \nonumber \\
    &\stackrel{\eqref{DefLipschitz}}{\leq}& 2Cn\gamma_1\sum_{i=0}^{n-1}\left\| F_{\pi^k_i}(\Bar{z}_i^k) - F_{\pi^k_i}(\Hat{z}_0^k)\right\|^2 + 2 n^2 C\gamma_1 L^2 \|\Hat{z}^k_0 - z^*\|^2 \nonumber
\end{eqnarray}
Using the definition of $\Hat{z}^k_0 = z_0^k - \gamma_2 F(z^k_0)$, we get
\begin{eqnarray}
    T 
    &\leq& 2Cn\gamma_1\sum_{i=0}^{n-1}\left\| F_{\pi^k_i}(\Bar{z}_i^k) - F_{\pi^k_i}(\Hat{z}_0^k)\right\|^2  + 2 n^2 C\gamma_1 L^2 \|z_0^k - \gamma_2 F(z^k_0) - z^*\|^2 \nonumber\\
    &\stackrel{\eqref{ineq1}}{\leq}& 2Cn\gamma_1\sum_{i=0}^{n-1}\left\| F_{\pi^k_i}(\Bar{z}_i^k) - F_{\pi^k_i}(\Hat{z}_0^k)\right\|^2 + 4 n^2 C\gamma_1 L^2 \|z_0^k-z^*\|^2 + 4 n^2 C\gamma_1 \gamma_2^2 L^2 \|F(z_0^k)\|^2 \quad \quad \label{my23} 
\end{eqnarray}
Taking expectation condition on the filtration $\mathcal{F}_k$, we have 
\begin{eqnarray}
    \Expep{T} &\leq& 2C\gamma_1n \Expep{\sum_{i=0}^{n-1}\left\| F_{\pi^k_i}(\Bar{z}_i^k) - F_{\pi^k_i}(\Hat{z}_0^k)\right\|^2} + 4n^2 C\gamma_1 L^2 \|z_0^k-z^*\|^2  + 4 n^2 C\gamma_1 \gamma_2^2 L^2 \|F(z_0^k)\|^2 \quad \quad \label{my24}
\end{eqnarray}
It, now, remains to bound the sum on the right-hand side of \eqref{my24}. Using Lemma \ref{Lemma: Strongly-monotone-case1}, we get
\begin{eqnarray}
    \Expep{T} &\stackrel{\eqref{Lemma: Strongly-monotone-case1}}{\leq}& 12C\gamma_1n L_{max}^2 \underbrace{\Expep {\sum_{i=0}^{n-1} \left\|z^k_{i}-z^k_0\right\|^2}}_{T_1} +6C\gamma_1n L_{max}^2 \gamma_2^2 \underbrace{\Expep{\sum_{i=0}^{n-1} \left\|F_{\pi_i^k}(z^k_{0}) -F(z^k_{0}) \right\|^2}}_{T_2} \nonumber \\
    && + 4 n^2 C\gamma_1 L^2 \|z_0^k-z^*\|^2  + 4 n^2 C\gamma_1 \gamma_2^2 L^2 \|F(z_0^k)\|^2 \label{bv0}
\end{eqnarray}
Using Lemma \ref{Lemma: new iterates in epoch}, we can bound the term $T_1$ for $\gamma_1 \leq \frac{1}{3\sqrt{2n(n-1)}L_{max}}, \gamma_2 \leq \frac{1}{L_{max}}$ as follows
\begin{eqnarray}
    T_1 &\leq& n\gamma_1^2 \left[18n^2L^2 + A (24n^2-23n+1)\right]\norm{z_0^k - z^*}^2 + 2n\gamma_1^2(24n^2-23n+1)\sigma_*^2 \label{bv1}
\end{eqnarray}
From Proposition \ref{app: bound for const assumpt}, we can bound the term $T_2$ as follows
\begin{eqnarray}
    T_2 &\leq& n A \|z^k_0 - z^*\|^2 + 2n \sigma_*^2 \label{bv2}
\end{eqnarray}
Substituting \eqref{bv1}, \eqref{bv2} into \eqref{bv0}, we get
\begin{eqnarray}
    \Expep{T} &\leq& 12C\gamma_1^3n^2 L_{max}^2\left[18n^2L^2 + A (24n^2-23n+1)\right]\norm{z_0^k - z^*}^2 \nonumber \\ 
    && + 24C\gamma_1^3n^2 L_{max}^2 (24n^2-23n+1)\sigma_*^2 \nonumber \\ 
    && +6C\gamma_1n L_{max}^2 \gamma_2^2 \left[n A \|z^k_0 - z^*\|^2 + 2n \sigma_*^2\right] \nonumber\\
    && +4 n^2 C \gamma_1 L^2 \|z_0^k-z^*\|^2  + 4 n^2 C\gamma_1 \gamma_2^2 L^2 \|F(z_0^k)\|^2 \nonumber \\
    &=& 2n^2 \gamma_1 C \left\{2L^2+6\gamma_1^2 L_{max}^2\left[18n^2L^2 + A (24n^2-23n+1)\right]+3A L_{max}^2 \gamma_2^2\right\} \norm{z_0^k - z^*}^2 \nonumber \\
    && + 4 n^2 C\gamma_1 \gamma_2^2 L^2 \|F(z_0^k)\|^2 + 12CL_{max}^2\gamma_1n^2 \left[2\gamma_1^2 (24n^2-23n+1) + \gamma_2^2\right] \sigma_*^2 \label{le1}
\end{eqnarray}

To simplify the coefficient of the term $\norm{z_0^k - z^*}^2$ in the above inequality we can use the fact that 
$\gamma_1 \leq \frac{1}{3\sqrt{2}nL_{max}} \leq \frac{1}{nL_{max}}$ and $\gamma_2 \leq \frac{1}{L_{max}}$ and get the upper bound 
        \begin{eqnarray}
            \left\{2L^2+6\gamma_1^2 L_{max}^2\left[18n^2L^2 + A (24n^2-23n+1)\right]+3A L_{max}^2 \gamma_2^2\right\}
            &\leq& 2 L^2+6\left[18L^2 +24A\right]+3A\nonumber\\
            &\leq& 10 (11L^2 + 15A)\label{up1} 
        \end{eqnarray}
        
    Substituting \eqref{up1} into \eqref{le1}, we get 
\begin{eqnarray}
    \Expep{T} &\leq& 20n^2 \gamma_1 C(11L^2 + 15A) \norm{z_0^k - z^*}^2 + 4 n^2 C\gamma_1 \gamma_2^2 L^2 \|F(z_0^k)\|^2 \nonumber \\
    && + 12CL_{max}^2\gamma_1n^2 \left[2\gamma_1^2 (24n^2-23n+1) + \gamma_2^2\right] \sigma_*^2 \nonumber
\end{eqnarray}
\end{proof}

\subsubsection{Proof of Theorem \ref{theorem monotone my}} 
\label{theorem monotone app my}

We are now ready to prove Theorem \ref{theorem monotone my}. To do that, we are using Lemmas \ref{Lemma: my0} and \ref{Lemma: new my}.

\begin{proof}
Since the operator $F$ is monotone and each $F_i$ is $L_i$-Lipschitz, we have from Lemma \ref{Lemma: my0} that for $z^* \in \mathcal{Z}_*$ it holds 
\begin{eqnarray}
    \|z_0^{k+1} - z^*\|^2 &\leq& (1+2 \gamma_1 n L^2+2 \gamma_1 L_{max}) \|z_0^k-z^*\|^2  - n\left[2\gamma_2^2n(1 - \gamma_1 n L^2-\gamma_1 L_{max}) - 3\gamma_1 \left(\gamma_2 n + 1\right)^2\right] \|F(z_0^k)\|^2\nonumber \\ 
      && - (2 - \gamma_1 L_{max}) \|z_0^{k+1} -\Tilde{z}^k_0\|^2 + \gamma_1\left[3n\gamma_1^2(1 + L^2)+\gamma_1+\frac{2}{L_{max}}\right]\left\|\sum\limits_{i=0}^{n-1} F_{\pi^k_i}(\Bar{z}_i^k)\right\|^2\nonumber
\end{eqnarray}
Since $\gamma_1 \leq \gamma_{1, max} \leq \frac{2}{L_{max}}$, we have that $-(2 - \gamma_1 L_{max}) \leq 0$ and thus we get 
  \begin{eqnarray}
    \|z_0^{k+1} - z^*\|^2 &\leq& (1+2 \gamma_1 n L^2+2 \gamma_1 L_{max}) \|z_0^k-z^*\|^2 \nonumber \\
    && - n\left[2\gamma_2^2n(1 - \gamma_1 n L^2-\gamma_1 L_{max}) - 3\gamma_1 \left(\gamma_2 n + 1\right)^2\right] \|F(z_0^k)\|^2\nonumber \\ 
      && + \gamma_1\left[3n\gamma_1^2(1 + L^2)+\gamma_1+\frac{2}{L_{max}}\right]\left\|\sum\limits_{i=0}^{n-1} F_{\pi^k_i}(\Bar{z}_i^k)\right\|^2\label{my10}
  \end{eqnarray} 
 
  Note that for $\gamma_1 \leq \frac{1}{2(nL^2+L_{max})}$ the term $ -(1 - \gamma_1 n L^2-\gamma_1 L_{max}) \leq - \frac{1}{2}$. 
  To simplify the coefficient of the term $ \|F(z_0^k)\|^2$ in \eqref{my10}, 
  we will choose the stepsizes of the algorithm to satisfy $\gamma_1 \leq \min\left\{\frac{1}{2(nL^2+L_{max})}, \frac{\gamma_2^2n}{4\left(\frac{n}{L_{max}} + 1\right)^2}\right\}, \gamma_2 \leq \frac{1}{L_{max}}$ so that 
  \begin{eqnarray}
    - n\left[2\gamma_2^2n(1 - \gamma_1 n L^2-\gamma_1 L_{max}) - 3\gamma_1 \left(\gamma_2 n + 1\right)^2\right] &\leq& - n\left[\gamma_2^2n - 3\gamma_1 \left(\gamma_2 n + 1\right)^2\right]\nonumber \\ 
    &\stackrel{\gamma_2 \leq \frac{1}{L_{max}}}{\leq}& - n\left[\gamma_2^2n - 3\gamma_1 \left(\frac{n}{L_{max}} + 1\right)^2\right] \nonumber \\
    &\stackrel{\gamma_1 \leq \frac{\gamma_2^2n}{4\left(\frac{n}{L_{max}} + 1\right)^2}}{\leq}& -\frac{\gamma_2^2 n^2}{4} \nonumber 
  \end{eqnarray}
  
  Additionally, since $\gamma_1 \leq \gamma_{1, max} \leq \frac{1}{nL_{max}} \leq \frac{1}{L_{max}}$ we can simplify the coefficient of the term $\left\|\sum\limits_{i=0}^{n-1} F_{\pi^k_i}(\Bar{z}_i^k)\right\|^2$ in \eqref{my10}, as follows 
  \begin{eqnarray}
    3n\gamma_1^2(1 + L^2)+\gamma_1+\frac{2}{L_{max}} &\leq& \frac{3(1 + L^2)}{nL^2_{max}}+\frac{1}{L_{max}}+\frac{2}{L_{max}} \leq C \nonumber
  \end{eqnarray}
    where $C = \frac{3(1 + L^2+L_{max})}{L^2_{max}}$.
    
    Hence, we get from inequality \eqref{my10} that 
  \begin{eqnarray}
      \|z_0^{k+1} - z^*\|^2 &\stackrel{}{\leq}& (1+2 \gamma_1 n L^2+2 \gamma_1 L_{max}) \|z_0^k-z^*\|^2  -\frac{\gamma_2^2 n^2}{4} \|F(z_0^k)\|^2 + \gamma_1 C \left\|\sum\limits_{i=0}^{n-1} F_{\pi^k_i}(\Bar{z}_i^k)\right\|^2\nonumber
  \end{eqnarray}

  Taking expectation condition on the filtration $\mathcal{F}_k$, we have that 
    \begin{eqnarray}
        \Expep{\|z_0^{k+1} - z^*\|^2} &\leq& (1+2 \gamma_1 n L^2+2 \gamma_1 L_{max}) \|z_0^k-z^*\|^2  -\frac{\gamma_2^2 n^2}{4} \|F(z_0^k)\|^2 +\underbrace{\gamma_1C\Expep{\left\|\sum\limits_{i=0}^{n-1} F_{\pi^k_i}(\Bar{z}_i^k)\right\|^2}}_{T}\quad \quad \label{my244}
    \end{eqnarray}
    
    From Lemma \ref{Lemma: new my}, we have for $\gamma_1 \leq \frac{1}{3\sqrt{2}nL_{max}}$, $ \gamma_2 \leq \frac{1}{L_{max}}$ that the following holds
      \begin{eqnarray}
          T &\leq& 20n^2 \gamma_1 C(11L^2 + 15A) \norm{z_0^k - z^*}^2 + 4 n^2 C\gamma_1 \gamma_2^2 L^2 \|F(z_0^k)\|^2 \nonumber \\
        && + 12CL_{max}^2\gamma_1n^2 \left[2\gamma_1^2 (24n^2-23n+1) + \gamma_2^2\right] \sigma_*^2 \label{my11}
      \end{eqnarray}
  
      Substituting \eqref{my11} into \eqref{my244} and rearranging, we obtain 
      \begin{eqnarray}
         \Expep{\|z_0^{k+1} - z^*\|^2} &\leq& \left\{1 + 2 \gamma_1 [nL^2+L_{max}+10n^2C(11L^2 + 15A)]\right\} \|z_0^k-z^*\|^2  \nonumber \\
         &&-\left(\frac{1}{4}- 4C\gamma_1L^2\right) \gamma_2^2 n^2 \|F(z_0^k)\|^2\nonumber\\
        && + 12CL_{max}^2\gamma_1n^2 \left[2\gamma_1^2 (24n^2-23n+1) + \gamma_2^2\right] \sigma_*^2 \nonumber \\
            &\stackrel{\gamma_1 \leq \frac{1}{32CL^2}}{\leq}& \left\{1 + 2 \gamma_1 [nL^2+L_{max}+10n^2C(11L^2 + 15A)]\right\} \|z_0^k-z^*\|^2  -\frac{\gamma_2^2n^2}{8}\|F(z_0^k)\|^2\nonumber\\
        && + 12CL_{max}^2\gamma_1n^2 \left[2\gamma_1^2 (24n^2-23n+1) + \gamma_2^2\right] \sigma_*^2 \nonumber
    \end{eqnarray}
    where at the last step we used that $\gamma_1 \leq \min\left\{\frac{\gamma_2^2n}{4\left(\frac{n}{L_{max}} + 1\right)^2}, \frac{1}{3\sqrt{2}nL_{max}}, \frac{1}{2nK [nL^2+L_{max}+10n^2C(11L^2 + 15A)}\right\} \leq \frac{1}{32CL^2}$.
    Letting $\alpha = 1 + 2 \gamma_1 [nL^2+L_{max}+10n^2C(11L^2 + 15A)]$ and rearranging the terms, we get
    \begin{eqnarray}
        \frac{\gamma_2^2n^2}{8} \|F(z_0^k)\|^2 &\stackrel{}{\leq}& \alpha \|z_0^k-z^*\|^2 - \Expep{\|z_0^{k+1} - z^*\|^2} + 12CL_{max}^2\gamma_1n^2 \left[2\gamma_1^2 (24n^2-23n+1) + \gamma_2^2\right] \sigma_*^2\nonumber
    \end{eqnarray}
    
Taking expectation on both sides and using the tower property of expectation, we have 
\begin{eqnarray}
    \Expe{\|F(z_0^k)\|^2} &\stackrel{}{\leq}& \frac{8\Expe{\alpha \|z_0^k-z^*\|^2 - \|z_0^{k+1} - z^*\|^2}}{\gamma_2^2n^2} + \frac{96CL_{max}^2\gamma_1 \left[2\gamma_1^2 (24n^2-23n+1) + \gamma_2^2\right] \sigma_*^2}{\gamma_2^2} \nonumber
\end{eqnarray}

Let $G_k = \left(\frac{1}{\alpha}\right)^k, \forall k \geq 0$. 
Multiplying both sides with $G_{k+1}$ and using the fact that $G_{k+1} \cdot \alpha = G_{k}$, we get
\begin{eqnarray}
  G_{k+1} \Expe{\|F(z_0^k)\|^2} &\stackrel{}{\leq}& \frac{8\Expe{G_k \|z_0^k-z^*\|^2 - G_{k+1} \|z_0^{k+1} - z^*\|^2}}{\gamma_2^2n^2} \nonumber \\
  && + \frac{96CL_{max}^2\gamma_1 \left[2\gamma_1^2 (24n^2-23n+1) + \gamma_2^2\right] \sigma_*^2}{\gamma_2^2} G_{k+1}\nonumber  
\end{eqnarray}

Summing for $k = 0, ..., K-1$ and multiplying by $\frac{1}{\sum_{j=0}^{K-1}G_{j+1}}$ we have that 
\begin{eqnarray}
  \sum\limits_{k=0}^{K-1} \frac{G_{k+1}}{\sum_{j=0}^{K-1}G_{j+1}} \Expe{\|F(z_0^k)\|^2} &\stackrel{}{\leq}& \frac{8 \sum_{k=0}^{K-1} \Expe{G_k \|z_0^k-z^*\|^2 - G_{k+1} \|z_0^{k+1} - z^*\|^2}}{\gamma_2^2n^2\sum_{j=0}^{K-1}G_{j+1}} \nonumber \\
&& + \frac{96CL_{max}^2\gamma_1 \left[2\gamma_1^2 (24n^2-23n+1) + \gamma_2^2\right] \sigma_*^2}{\gamma_2^2}\label{rih}
\end{eqnarray}

From the telescopic sum in the right hand-side of \eqref{rih}, we get 
\begin{eqnarray}
  \sum\limits_{k=0}^{K-1} \frac{G_{k+1}}{\sum_{j=0}^{K-1}G_{j+1}} \Expe{\|F(z_0^k)\|^2} &\leq& \frac{8 \|z_0-z^*\|^2}{\gamma_2^2n^2\sum_{j=0}^{K-1}G_{j+1}} + \frac{96CL_{max}^2\gamma_1 \left[2\gamma_1^2 (24n^2-23n+1) + \gamma_2^2\right] \sigma_*^2}{\gamma_2^2}\nonumber
\end{eqnarray}

Let $w_{k} = \frac{G_{k+1}}{\sum_{j=0}^{K-1}G_{j+1}}$. We have that
\begin{eqnarray}
    \sum_{k=0}^{K-1} w_k\Expe{\| F(z_0^k)\|^2}&\leq& \frac{8 \|z_0-z^*\|^2}{\gamma_2^2n^2\sum_{j=0}^{K-1}G_{j+1}} + \frac{96CL_{max}^2\gamma_1 \left[2\gamma_1^2 (24n^2-23n+1) + \gamma_2^2\right] \sigma_*^2}{\gamma_2^2} \label{ineq_e_my13}
\end{eqnarray}
We, next, bound the sum $\sum_{j=0}^{K-1}G_{j+1}$ by using the fact that the sequence $G_k$ is decreasing as follows
\begin{eqnarray}
    \sum_{j=0}^{K-1}G_{j+1} \geq \sum_{j=0}^{K-1} G_{K} = K G_{K} = \frac{K}{\alpha^K}
  \Rightarrow \frac{1}{\sum_{j=0}^{K-1}G_{j+1}} \leq \frac{\alpha^K}{K} \label{my14}
\end{eqnarray}
Substituting \eqref{my14} in \eqref{ineq_e_my13}, we get 
\begin{eqnarray}
  \sum_{k=0}^{K-1} w_k\Expe{\| F(z_0^k)\|^2} &\leq& \frac{8 \alpha^K \|z_0-z^*\|^2}{\gamma_2^2n^2K}+ \frac{96CL_{max}^2\gamma_1 \left[2\gamma_1^2 (24n^2-23n+1) + \gamma_2^2\right] \sigma_*^2}{\gamma_2^2} \label{my15}
\end{eqnarray}
Using inequality $1+x \leq e^{x}$, we have that
\begin{eqnarray}
    \alpha^K &=& \left\{1 + 2 \gamma_1 [nL^2+L_{max}+10n^2C(11L^2 + 15A)]\right\}^K \leq  e^{2 K \gamma_1 [nL^2+L_{max}+10n^2C(11L^2 + 15A)]} \label{my16}
\end{eqnarray}
 
For $\gamma_1\leq \frac{1}{2nK [nL^2+L_{max}+10n^2C(11L^2 + 15A)]}$, we have that $2 K \gamma_1 [nL^2+L_{max}+10n^2C(11L^2 + 15A)] \leq 1$ and thus \eqref{my16} gives 
\begin{eqnarray}
     \alpha^K \leq e^{2 K \gamma_1 [nL^2+L_{max}+10n^2C(11L^2 + 15A)]} \leq e^1 \leq 3 \nonumber 
\end{eqnarray}

Substituting the bound of $\alpha^K$ into \eqref{my15}, we get that for stepsizes $\gamma_1 \leq \gamma_{1, max}, \gamma_2 \leq \frac{1}{L_{max}}$ it holds 
\begin{eqnarray}
    \sum_{k=0}^{K-1} w_k\Expe{\| F(z_0^k)\|^2} &\leq& \frac{24 \|z_0 -z^*\|^2}{\gamma_2^2n^2K} +\frac{96CL_{max}^2\gamma_1 \left[2\gamma_1^2 (24n^2-23n+1) + \gamma_2^2\right] \sigma_*^2}{\gamma_2^2} \label{res-monotone-ineq}
\end{eqnarray}
where $\gamma_{1, max} = \min\left\{\frac{\gamma_2^2n}{4\left(\frac{n}{L_{max}} + 1\right)^2}, \frac{1}{3\sqrt{2}nL_{max}}, \frac{1}{2nK [nL^2+L_{max}+10n^2C(11L^2 + 15A)}\right\}$. \\

\textbf{Proof of Inequality \eqref{2nd-pt-monotone_my}.} Let $D_2 = nL^2+L_{max}+10n^2C(11L^2 + 15A)$ for brevity. Substituting $\gamma_2 = \frac{1}{L_{max} n^{\frac{3}{4}} K^{\frac{1}{4}}}$ and the bound $\gamma_1 = \gamma_{1, max} \leq \frac{1}{2nKD_2}$ in \eqref{res-monotone-ineq}, we have that 
\begin{eqnarray}
   \sum_{k=0}^{K-1} w_k\Expe{\| F(z_0^k)\|^2} &\leq& \frac{24L_{max}^2 \|z_0 -z^*\|^2}{\sqrt{nK}} + \frac{48\sqrt{n}CL_{max}^4\sigma_*^2}{D_2 \sqrt{K}}  \left[\frac{(24n^2-23n+1)}{2D_2^2 n^2K^2}+ \frac{1}{L^2_{max} n^{\frac{3}{2}}\sqrt{K}}\right]\nonumber \\
   &=& \frac{24L_{max}^2 \|z_0 -z^*\|^2}{\sqrt{nK}} + \frac{24CL_{max}^4\sqrt{n}(24n^2-23n+1)\sigma_*^2}{D_2^3 n^2K^{\frac{5}{2}}}+ \frac{48CL_{max}^2\sigma_*^2}{D_2nK}\label{res-with-epochs} 
\end{eqnarray}
\end{proof}

\subsubsection{Iteration Complexity of SEG-RR}
\label{sec: iter_comp_seg_rr}
According to Theorem \ref{theorem monotone my}, if the stepsizes of the \ref{eq:SEG-RR} algorithm satisfy $\gamma_2 = \frac{1}{L_{max} n^{\frac{3}{4}} K^{\frac{1}{4}}}$ and the update step size is $\gamma_1 = \min\left\{\frac{\gamma_2^2n}{4\left(\frac{n}{L_{max}} + 1\right)^2}, \frac{1}{3\sqrt{2}nL_{max}}, \frac{1}{2nK [nL^2+L_{max}+10n^2C(11L^2 + 15A)}\right\}$ then it holds that
\begin{eqnarray}
    \sum_{k=0}^{K-1} w_k\Expe{\| F(z_0^k)\|^2} &\leq& \frac{24L_{max}^2 \|z_0 -z^*\|^2}{\sqrt{nK}} + \frac{24CL_{max}^4\sqrt{n}(24n^2-23n+1)\sigma_*^2}{D_2^3 n^2K^{\frac{5}{2}}}+ \frac{48CL_{max}^2\sigma_*^2}{D_2nK} \label{res-with-epoch2}
\end{eqnarray}
where $C = \frac{3(1 + L^2+L_{max})}{L^2_{max}}$ and $D_2 = nL^2+L_{max}+10n^2C(11L^2 + 15A)$.

We will prove that after some number of epochs the first term will dominate in \eqref{res-with-epoch2} and the rate of convergence will be $\mathcal{O}\left(\frac{1}{\sqrt{nK}}\right)$. To show this convergence rate, we, first, bound the term $D_2$ to simplify the expression in \eqref{res-with-epoch2} and then show that one can find a large enough constant $c > 0$ such that after a certain number of epochs the aforementioned rate is achieved. 

We, first, lower bound the term $D_2$ by the term $\Tilde{D} \triangleq L^2+L_{max}+\frac{30(1 + L^2+L_{max})}{L^2_{max}}(11L^2 + 15A)$, as follows
\begin{eqnarray}
    D_2 &=& nL^2+L_{max}+10n^2C(11L^2 + 15A) \nonumber \\ 
    &=& nL^2+L_{max}+\frac{30n^2(1 + L^2+L_{max})}{L^2_{max}} (11L^2 + 15A) \nonumber \\ 
    &\stackrel{}{\geq}& L^2+L_{max}+\frac{30(1 + L^2+L_{max})}{L^2_{max}}(11L^2 + 15A) \nonumber \\ 
    &\triangleq& \Tilde{D} \label{up_D}
\end{eqnarray}
Substituting the lower bound of $D_2$ into \eqref{res-with-epoch2}, we get 
\begin{eqnarray}
    \sum_{k=0}^{K-1} w_k\Expe{\| F(z_0^k)\|^2} &\leq& \frac{24L_{max}^2 \|z_0 -z^*\|^2}{\sqrt{nK}} + \frac{24CL_{max}^4\sqrt{n}(24n^2-23n+1)\sigma_*^2}{\Tilde{D}^3 n^2K^{\frac{5}{2}}}  + \frac{48CL_{max}^2\sigma_*^2}{\Tilde{D} nK}\label{ipe1}
\end{eqnarray}
Note that the constants $C, \Tilde{D}$ appearing in inequality \eqref{ipe1} do not depend on $n$ or $K$. Thus, one can find a large enough constant $c > 0$, i.e. $c = 24L_{max}^2 \max\left\{\|z_0 -z^*\|^2, \frac{24CL_{max}^2\sigma_*^2}{\Tilde{D}^3}, \frac{2C\sigma_*^2}{\Tilde{D}}\right\}$, such that from \eqref{ipe1} it holds
\begin{eqnarray}
    \sum_{k=0}^{K-1} w_k\Expe{\| F(z_0^k)\|^2} &\leq& \frac{c}{\sqrt{nK}} + \frac{c\sqrt{n}}{K^{\frac{5}{2}}}+ \frac{c}{nK} \leq c \left(\frac{2}{\sqrt{nK}} +\frac{\sqrt{n}}{K^{\frac{5}{2}}}\right) \nonumber
\end{eqnarray}
and find the number of epochs $K$ such that the following holds
\begin{eqnarray}
    c \left(\frac{2}{\sqrt{nK}} +\frac{\sqrt{n}}{K^{\frac{5}{2}}}\right) \leq \frac{3c}{\sqrt{nK}} &\iff& K\geq \sqrt{n} \label{com-result-monot}
\end{eqnarray}

Hence, after $K \geq \mathcal{O}\left(\sqrt{n}\right)$, we have that $\sum_{k=0}^{K-1} w_k\Expe{\| F(z_0^k)\|^2} \leq \frac{3c}{\sqrt{nK}} = \mathcal{O}\left(\frac{1}{\sqrt{nK}}\right)$.

\section{Further Convergence Guarantees}
\label{Appendix_Further}
In this section, we provide theoretical guarantees for SEG-SO and IEG as well as suggest a switching stepsize rule for SEG-RR. The use of the switching stepsize rule allows us to establish for SEG-RR a $\mathcal{O}\left(\frac{1}{k}\right)$ convergence to the exact solution $z^*$. We highlight, also, that the proposed stepsize schedule suggests when one should switch from a constant to a decreasing stepsize regime and is to the best of our knowledge the first time used in without-replacement sampling algorithms. 
\subsection{Other Variants of Without-replacement Sampling}\label{app SOandIG}
We start by showing how the proofs for SEG-RR can be modified in order to obtain convergence guarantees for two other variants of without-replacement sampling, namely the Shuffle Once (SO) sampling and the Incremental ExtraGradient (IEG).

The Shuffle Once variant samples at the first epoch of the algorithm a permutation $\pi$ of the dataset and then runs SEG using one data point at each iteration of the stochastic algorithm. The data point used in the $i$-th iteration is $\pi_i$, namely the $i$-th element of the permutation $\pi$.
Thus, the proofs for SEG-RR in all three regimes hold also for SEG-SO if we let $\pi^k = \pi$ for all $k \geq 0$. In this way, we are able to recover convergence guarantees for SEG-SO variant in strongly monotone, affine and monotone settings.

Regarding the Incremental ExtraGradient (IEG) variant, one can identify more easily the differences with random reshuffling in the pseudocode of Algorithm~\ref{algo EG IG}. Specifically, IEG does not sample any permutation of the dataset and instead regards the data samples in the order that were initially given in the dataset. Thus, the main modification in the proof of SEG-RR to get convergence guarantees for IEG is that one cannot use Lemma \ref{prop:random_reshuffling} and instead needs to use Lemma \ref{prop: bound prr epoch deviations} for bounding the distance of stochastic oracles $F_i$ from the operator $F$. Additionally, we observe that since the permutation $\pi=[n]$ (the initial order of the dataset) is fixed for all epochs $k\geq0$, there is no randomness involved in the selection of the data points at each epoch and hence any term appearing in conditional expectation in the proofs of SEG-RR will be equal to the same term without the expectation in the analysis of IEG. 

So far, we have explained how the proofs for SEG-RR in all three regimes can be modified to obtain convergence guarantees for SEG-SO and IEG. For illustration purposes, we provide in Sections \ref{app SEG_so strongly monotone}, \ref{app IEG strongly monotone} the proof for the strongly monotone case for SEG-SO and IEG, highlighting the differences with the proof of SEG-RR. Lastly, we note that our results for the switching stepsize rule in SEG-RR from Section \ref{sec: switch} can be also extended to the SEG-SO and IEG algorithms.
\subsubsection{SEG-SO}
\label{app SEG_so strongly monotone}
\vspace{+0.1cm}

\begin{corollary}
Suppose that the operator $F$ is $\mu$-strongly monotone and each $F_i, \, \forall i\in[n]$ is $L_i-$Lipschitz.
\begin{enumerate}[leftmargin=*]
\setlength{\itemsep}{0pt}
    \item Then the iterates of SEG-SO with constant step size $\gamma_2 = 2 \gamma_1$, $\gamma_1 \leq \frac{\mu}{10L_{max}^2\sqrt{10n^2+2n+54}}$ satisfy:
\begin{eqnarray}
    \Expe{\|z_0^{k} - z^*\|^2 } \leq\left(1 - \frac{\gamma_1 n\mu}{4} \right)^k {\|z_0- z^*\|^2} +\frac{96L_{max}^2}{\mu^2}\left[(25+n) \gamma_1^2+\gamma_2^2\right] \sigma_*^2 \nonumber 
\end{eqnarray}
    \item Let $K$ be the total number of epochs the SEG-SO is run.\\
    For step size $\gamma_2 = 2 \gamma_1$, $\gamma_1 = \min\left\{\frac{\mu}{10L_{max}^2\sqrt{10n^2+2n+54}}, \frac{4\log (n^{1/2}K)}{\mu nK}\right\}$, the following holds:
\begin{eqnarray}
   \Expe{\|z_0^{K} - z^*\|^2 } &=& \Tilde{\mathcal{O}}\left( e^{-\frac{\mu^2K}{L_{max}^2}} + \frac{1}{{nK^2}}\right) \nonumber
\end{eqnarray}
\end{enumerate}
\end{corollary}
\begin{proof}
Let $\pi$ be the permutation that is chosen at the start of the SEG-SO algorithm. By applying Theorem \ref{thm SC-SC4} and letting the permutation $\pi^k = \pi$ for all epochs $k \geq 0$ one can observe that the algorithm run is essentially SEG-SO. Thus, the results follow immediately.
\end{proof}

\newpage
\subsubsection{IEG}
\label{app IEG strongly monotone}
\vspace{+0.1cm}
\begin{lemma}\label{Lemma: squared-norms-epoch-iterates-IEG}
    Assume that each $F_i, i\in [n]$ is $L_i-$Lipchitz and the step size of IEG satisfy $\gamma_1\leq \frac{1}{3\sqrt{2n(n-1)}L_{max}}$, $ \gamma_2 \leq \frac{1}{\sqrt{n (n-1)} L_{max}}$.
    The iterates of the IEG algorithm satisfy the following bound 
    \begin{eqnarray}
        \frac{1}{n} \sum\limits_{j=0}^{n-1} \norm{z_{j}^k - z_0^k}^2 &\stackrel{}{\le}& [10n^2L^2 + 27(n-1)A] \gamma_1^2\norm{z_0^k - z^*}^2 + 66n(n-1)\gamma_1^2\sigma_*^2\nonumber
    \end{eqnarray}
\end{lemma}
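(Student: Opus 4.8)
The plan is to follow the proof of Lemma~\ref{Lemma: squared-norms-epoch-iterates} (the \ref{eq:SEG-RR} version) essentially verbatim up to the point where random reshuffling is invoked, and to replace that single step by a deterministic estimate, since for IEG the permutation $\pi=[n]$ is fixed and no expectation over $\mathbb{S}_n$ is available (indeed the target bound carries no conditional expectation). Concretely, I would first unroll the update rule \eqref{SEG_update_rule} to write $z_i^k - z_0^k = -\gamma_1\sum_{j=0}^{i-1}F_{\pi_j}(\bar z_j^k)$, insert and subtract $F_{\pi_j}(z_0^k)$ and $F(z_0^k)$, and split $\norm{z_i^k-z_0^k}^2$ into three pieces via \eqref{ineq1}: a Lipschitz term $\sum_{j}\norm{F_{\pi_j}(\bar z_j^k)-F_{\pi_j}(z_0^k)}^2$, a partial-sum oracle-deviation term $\norm{\frac1i\sum_{j<i}F_{\pi_j}(z_0^k)-F(z_0^k)}^2$, and the full-operator term $\norm{F(z_0^k)}^2$. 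Substituting the extrapolation step \eqref{SEG_extrapolation_step} and using the $L_{max}$-Lipschitzness of each $F_i$ turns the first piece into a self-referential multiple of $G_k:=\frac1n\sum_{j}\norm{z_j^k-z_0^k}^2$ plus a $\norm{F_{\pi_j}(z_0^k)}^2$ remainder, exactly as in Lemma~\ref{Lemma: squared-norms-epoch-iterates}; that remainder is handled by Proposition~\ref{prop: bound for constant assumpt} applied to the full index set (which is order-independent).

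The one genuine modification is the partial-sum term. In the \ref{eq:SEG-RR} proof this term is taken in conditional expectation and bounded by Lemma~\ref{prop: bound prr epoch deviations} (which rests on Proposition~\ref{prop:random_reshuffling}), producing the sampling-without-replacement factor $\tfrac{i(n-i)}{n-1}$. For IEG there is no permutation randomness, so I would instead bound it deterministically: by \eqref{ineq1}, $i^2\norm{\frac1i\sum_{j<i}F_{\pi_j}(z_0^k)-F(z_0^k)}^2 \le i\sum_{j<i}\norm{F_{\pi_j}(z_0^k)-F(z_0^k)}^2$, and then extend the partial sum to the full index set $\{0,\dots,n-1\}$ and apply Proposition~\ref{prop: bound for constant assumpt}, giving $\le in\big(A\norm{z_0^k-z_*}^2+2\sigma_*^2\big)$. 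This replaces the reshuffling factor $\tfrac{i(n-i)}{n-1}$ (whose average over $i$ is $O(n)$) by $in$ (whose average is $O(n^2)$); this is precisely the quantitative price of dropping reshuffling, and it is what produces the $n(n-1)$-type constants in the statement rather than the $(25+n)$-type constants of Lemma~\ref{Lemma: squared-norms-epoch-iterates}.

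With this replacement in hand, I would collect the three contributions, sum over $0\le i\le n-1$, and divide by $n$, using the elementary identities $\frac1n\sum_i i=\frac{n-1}{2}$ and $\frac1n\sum_i i^2=\frac{(n-1)(2n-1)}{6}$. This yields an implicit inequality of the form $\big[1-9n(n-1)\gamma_1^2L_{max}^2\big]G_k \le \gamma_1^2 D\norm{F(z_0^k)}^2 + (\cdots)A\norm{z_0^k-z_*}^2+(\cdots)\sigma_*^2$, after absorbing the $\gamma_2$-dependent factors via $\gamma_2\le\frac{1}{\sqrt{n(n-1)}L_{max}}$. The hypothesis $\gamma_1\le\frac{1}{3\sqrt{2n(n-1)}L_{max}}$ guarantees $D_1:=1-9n(n-1)\gamma_1^2L_{max}^2\ge\frac12$, so dividing through is legitimate and costs only a factor $2$; bounding $D\le 5n^2$ and finally using $\norm{F(z_0^k)}^2=\norm{F(z_0^k)-F(z_*)}^2\le L^2\norm{z_0^k-z_*}^2$ converts the $\norm{F(z_0^k)}^2$ term into the leading $10n^2L^2$ coefficient and delivers a bound of the claimed form.

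The main obstacle is exactly the partial-sum term: unlike \ref{eq:SEG-RR}, there is no variance cancellation to exploit, so the deterministic estimate above is the best available and is responsible for the degraded $n(n-1)$ constants (visible already in the $66n(n-1)$ coefficient of $\sigma_*^2$). Beyond this, the only point requiring care is that the estimate for $\norm{z_i^k-z_0^k}^2$ is self-referential through $G_k$, so one must verify that the step-size conditions keep $D_1$ bounded away from $0$ in order to close the recursion; all remaining manipulations are the same routine term-tracking as in the proof of Lemma~\ref{Lemma: squared-norms-epoch-iterates}.
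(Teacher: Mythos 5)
Your proposal is correct and follows essentially the same route as the paper: the paper likewise reuses the proof of Lemma~\ref{Lemma: squared-norms-epoch-iterates} verbatim (with all expectations dropped, since IEG is deterministic) up to inequality~\eqref{eq:4}, and then replaces the reshuffling bound of Lemma~\ref{prop: bound prr epoch deviations} by exactly your deterministic estimate — inequality~\eqref{ineq1} followed by Proposition~\ref{prop: bound for constant assumpt} on the full index set — which trades the factor $\tfrac{i(n-i)}{n-1}$ for $in$ and yields the $n(n-1)$-scale constants. The subsequent summation over $i$, the closing of the self-referential inequality via $D_1 \geq \tfrac12$ under the stated step sizes, the bound $D \leq 5n^2$, and the final Lipschitz step are all identical to the paper's argument.
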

\begin{proof}
    The proof of the Lemma \ref{Lemma: squared-norms-epoch-iterates-IEG} follows exactly the proof of Lemma \ref{Lemma: squared-norms-epoch-iterates} until inequality \eqref{eq:4} with the only difference that the expectation of any quantity is substituted with the quantity inside the expectation. Hence, from inequality \eqref{eq:4} we have that
    \begin{eqnarray}
        \norm{z_i^k - z_0^k}^2 &\leq& 6\gamma_1^2 L_{max}^2i (1+2\gamma_2^2 L_{max}^2)  nG_k + 3\gamma_1^2 (i^2+8 \gamma_2^2 i^2L_{max}^2)\norm{F(z_0^k)}^2 \nonumber \\ 
        && +24 \gamma_1^2\gamma_2^2L_{max}^2 A ni \norm{z_0^k - z^*}^2 + 48 \gamma_1^2\gamma_2^2L_{max}^2ni \sigma_*^2\nonumber \\ 
        && + 3\gamma_1^2 i^2 \norm{\frac{1}{i}\sum\limits_{j=0}^{i-1} F_{\pi_j^k} (z_0^k)-F (z_0^k)}^2\nonumber
    \end{eqnarray}
    where $G_k = \frac{1}{n} \sum\limits_{j=0}^{n-1} \norm{z_{j}^k - z_0^k}^2$.\\ 
The only change occurs in applying Proposition \ref{app: bound for const assumpt} to bound the last term, instead of Lemma \ref{prop: bound prr epoch deviations}. Applying inequality \eqref{ineq1} and Proposition \ref{app: bound for const assumpt}, we obtain
\begin{eqnarray}
    \norm{z_i^k - z_0^k}^2 &\stackrel{\eqref{ineq1}, \eqref{app: bound for const assumpt}}{\leq}&
     6\gamma_1^2 L_{max}^2i (1+2\gamma_2^2 L_{max}^2)  nG_k + 3\gamma_1^2 (i^2+8 \gamma_2^2 i^2L_{max}^2)\norm{F(z_0^k)}^2 \nonumber \\ 
        && +3(8 \gamma_2^2L_{max}^2 +1) niA \gamma_1^2 \norm{z_0^k - z^*}^2 + 6\gamma_1^2ni (8 \gamma_2^2L_{max}^2+3)\sigma_*^2\nonumber
\end{eqnarray}
Summing over $0\le i \le n-1$ and multiplying with $\frac{1}{n}$, we get: 
	\begin{eqnarray}
	G_k &\le& 3\gamma_1^2 L_{max}^2 (1+2\gamma_2^2 L_{max}^2) n(n-1) G_k +\gamma_1^2 D \norm{F(z_0^k)}^2 \nonumber \\ 
        && + \frac{3(8 \gamma_2^2L_{max}^2 +1)(n-1)A\gamma_1^2}{2}\norm{z_0^k - z^*}^2 + 3\gamma_1^2n(n-1) (8 \gamma_2^2L_{max}^2+3)\sigma_*^2 \nonumber
	\end{eqnarray}
	where we used the fact $\frac{1}{n} \sum_{i=0}^{n-1} i = \frac{n-1}{2}\nonumber$
        and let also $D = \left[\frac{(1+8\gamma_2^2 L_{max}^2)(n-1)(2n-1)}{2}\right]$ for brevity.\\
    Rearranging the terms, letting $D_1 = [1 - 3 n(n-1)(1+2\gamma_2^2 L_{max}^2)\gamma_1^2]$ and selecting the update stepsize $\gamma_1 < \frac{1}{\sqrt{3(1+2\gamma_2^2 L_{max}^2)n(n-1)}L_{max}}$, we have that
    \begin{eqnarray}
        G_k &\le& \gamma_1^2 \frac{D}{D_1} \norm{F(z_0^k)}^2 + \frac{3(8 \gamma_2^2L_{max}^2 +1)(n-1)A\gamma_1^2}{2D_1}\norm{z_0^k - z^*}^2 + \frac{3\gamma_1^2n(n-1) (8 \gamma_2^2L_{max}^2+3)\sigma_*^2}{D_1}\nonumber
    \end{eqnarray}
    Selecting $\gamma_1 \leq \frac{1}{3\sqrt{2n(n-1)}L_{max}}, \gamma_2 \leq \frac{1}{\sqrt{n(n-1)}L_{max}}$ and using inequalities \eqref{bound-for-D_1-2nd-term}, \eqref{bound-for-D-new-2nd-t}, we get
    \begin{eqnarray}
    G_k &\le& 10n^2\gamma_1^2 \norm{F(z_0^k)}^2 + 27(n-1)A\gamma_1^2\norm{z_0^k - z^*}^2 + 66n(n-1)\gamma_1^2\sigma_*^2\nonumber
    \end{eqnarray}
    Lastly, from the Lipschitz property of $F$, we obtain  
    \begin{eqnarray}
        G_k &\leq& [10n^2L^2 + 27(n-1)A] \gamma_1^2\norm{z_0^k - z^*}^2 + 66n(n-1)\gamma_1^2\sigma_*^2\nonumber
    \end{eqnarray}
\end{proof}
\newpage
\begin{corollary}
Suppose that the operator $F$ is $\mu$-strongly monotone and each $F_i, \, \forall i\in[n]$ is $L_i-$Lipschitz.
\begin{enumerate}[leftmargin=*]
\setlength{\itemsep}{0pt}
    \item Then the iterates of IEG with constant step size $\gamma_2 = 2 \gamma_1$, $\gamma_1 \leq \frac{\mu}{10L_{max}^2\sqrt{10n^2+n+29}}$ satisfy:
\begin{eqnarray}
    \|z_0^{k} - z^*\|^2 \leq\left(1 - \frac{\gamma_1 n\mu}{4} \right)^k {\|z_0- z^*\|^2} +\frac{48L_{max}^2}{\mu^2} \left[6n(n-1)\gamma_1^2+\gamma_2^2 \right]\sigma_*^2 \nonumber 
\end{eqnarray}
    \item Let $K$ be the total number of epochs the IEG is run.\\
    For step size $\gamma_2 = 2 \gamma_1$, $\gamma_1 = \min\left\{\frac{\mu}{10L_{max}^2\sqrt{10n^2+2n+29}}, \frac{4\log (n^{1/2}K)}{\mu nK}\right\}$, the following holds:
\begin{eqnarray}
  \|z_0^{K} - z^*\|^2  &=& \Tilde{\mathcal{O}}\left( e^{-\frac{K\mu^2}{L_{max}^2}} + \frac{1}{{K^2}}\right)\nonumber
\end{eqnarray}
\end{enumerate}
\end{corollary}
\begin{proof}
In IEG the data points are sampled according to the initial order in the dataset and thus $\pi = [n]$. A change to be noted in the proofs for IEG is that the algorithm does involve any stochasticity, as the permutation $\pi^k = [n]$ is fixed at each epoch $k \geq 0$. As a result, any term appearing inside expectation in the proof of SEG-RR will be deterministic in IEG and thus there is no necessity for expected values in the proofs of IEG.

By applying Theorem \ref{thm SC-SC4} and letting the permutation $\pi^k = [n]$ for all epochs $k \geq 0$ one can observe that the algorithm run is essentially IEG. 
The only difference with the proof of Theorem \ref{thm SC-SC4} is that in inequality \eqref{ineq_with_3terms}, Lemma \ref{Lemma: squared-norms-epoch-iterates-IEG} will be used instead of Lemma \ref{prop: bound prr epoch deviations} for bounding the term $T_2$. This will give the following upper bound 
\begin{eqnarray}
   \|z_0^{k+1} - z^*\|^2 &\leq& \left(1 - \frac{1}{2} \gamma_1 n \mu + \frac{U}{1 - \frac{\gamma_1 n \mu}{2}} + \frac{6nCL_{max}^2\gamma_1}{\mu}\right) \|z_{0}^k - z^*\|^2   + \frac{12nL_{max}^2\gamma_1}{\mu}\left[ 6n(n-1)\gamma_1^2+\gamma_2^2 \right]\sigma_*^2 \nonumber
\end{eqnarray}
where $C_1 = 2 \left[10n^2L^2 + 27(n-1)A\right] \gamma_1^2 +A \gamma_2^2$.
Selecting $\gamma_2 = 2 \gamma_1, \gamma_1 \leq \frac{\mu}{10L_{max}^2\sqrt{10n^2+n+29}}$, we get that \\
$\left(1 - \frac{1}{2} \gamma_1 n \mu + \frac{U}{1 - \frac{\gamma_1 n \mu}{2}} + \frac{6nCL_{max}^2\gamma_1}{\mu}\right) \leq (1 - \frac{1}{4}\gamma_1 n \mu)$ and thus
\begin{eqnarray}
 \|z_0^{k+1} - z^*\|^2 &\leq& \left(1 - \frac{1}{4} \gamma_1 n \mu\right) \|z_{0}^k - z^*\|^2  + \frac{12nL_{max}^2\gamma_1}{\mu}\left[ 6n(n-1)\gamma_1^2+\gamma_2^2 \right]\sigma_*^2 \nonumber   
\end{eqnarray}
Unrolling the recursion, we get
\begin{eqnarray}
 \|z_0^{k+1} - z^*\|^2 &\leq&  \left(1 - \frac{\gamma_1 n\mu}{4} \right)^{k+1} {\|z_0- z^*\|^2} +\frac{48L_{max}^2}{\mu^2} \left[6n(n-1)\gamma_1^2+\gamma_2^2 \right]\label{eq-ieg4}   
\end{eqnarray}
\textbf{Proof of 2nd point} Substituting the stepsize $\gamma_1 = \min\left\{\frac{\mu}{10L_{max}^2\sqrt{10n^2+2n+29}}, \frac{4\log (n^{1/2}K)}{\mu nK}\right\} \leq \frac{4\log (n^{1/2}K)}{\mu nK}$ into \eqref{eq-ieg4}, we have that 
\begin{eqnarray}
  \|z_0^{K} - z^*\|^2 &\leq& \left(1 - \frac{1}{4} \gamma_1 n \mu\right)^K \|z_{0} - z^*\|^2  + \frac{48L_{max}^2}{\mu^2} \left[6n(n-1)+4\right] \frac{16\log^2 (n^{1/2}K)}{\mu^2 n^2K^2}\sigma_*^2\nonumber \\
  &\leq& \left(1 - \frac{1}{4} \gamma_1 n \mu\right)^K \|z_{0} - z^*\|^2  + \mathcal{\Tilde{O}}\left(\frac{1}{K^2}\right)\nonumber
\end{eqnarray}
We now consider the following cases:
\paragraph{Case 1: $\frac{\mu}{10L_{max}^2\sqrt{10n^2+2n+29}}\leq \frac{\log (n^{1/2}K)}{\mu nK}$} In this case we have that $\gamma_1 = \frac{\mu}{10L_{max}^2\sqrt{10n^2+2n+54}}$, which implies that the RHS of \eqref{second_term_edited} is bounded by 
\begin{eqnarray}
    \|z_0^{K} - z^*\|^2 &\leq& e^{-\frac{\gamma_1 nK\mu}{4}} {\|z_0- z^*\|^2} +\mathcal{\Tilde{O}}\left(\frac{1}{K^2}\right)\nonumber \\ 
    &\leq& e^{-\frac{nK\mu^2}{40L_{max}^2\sqrt{10n^2+2n+29}}} {\|z_0- z^*\|^2} +\mathcal{\Tilde{O}}\left(\frac{1}{K^2}\right) \nonumber\\
    &\leq& e^{-\frac{K\mu^2}{40\sqrt{12}L_{max}^2}} {\|z_0- z^*\|^2} +\mathcal{\Tilde{O}}\left(\frac{1}{K^2}\right)\label{case1_ieg}
\end{eqnarray}

\paragraph{Case 2: $\frac{4\log (n^{1/2}K)}{\mu nK} \leq \frac{\mu}{10L_{max}^2\sqrt{10n^2+2n+29}}$}
In this case we have that $\gamma_1 = \frac{4\log (n^{1/2}K)}{\mu nK}$, which implies that the RHS of \eqref{second_term_edited} is bounded by 
\begin{eqnarray}
    \|z_0^{K} - z^*\|^2  &\leq& e^{-\frac{\gamma_1 nK\mu}{4}} {\|z_0- z^*\|^2} +\mathcal{\Tilde{O}}\left(\frac{1}{K^2}\right)\nonumber \\ 
    &\leq& \frac{1}{nK^2}{\|z_0- z^*\|^2} +\mathcal{\Tilde{O}}\left(\frac{1}{K^2}\right) \label{case2_ieg}
\end{eqnarray} 
Taking the maximum of the right-hand side of \eqref{case1_ieg} and \eqref{case2_ieg} and using the inequality $\max\{ a, b\} \leq a + b$, we obtain the desired result which holds for both cases:
\begin{equation}
  \|z_0^{K} - z^*\|^2 \leq e^{-\frac{K\mu^2}{40\sqrt{12}L_{max}^2}} {\|z_0- z^*\|^2} + \frac{1}{K^2}{\|z_0- z^*\|^2} +\mathcal{\Tilde{O}}\left(\frac{1}{K^2}\right)\nonumber  
\end{equation}
Suppressing constant and logarithmic terms, we get the final result
\begin{eqnarray}
    \|z_0^{K} - z^*\|^2 &=& \Tilde{\mathcal{O}}\left( e^{-\frac{K\mu^2}{L_{max}^2}} + \frac{1}{{K^2}}\right)\nonumber
\end{eqnarray}
\end{proof}

\subsection{SEG-RR with Switching Stepsize Rule}
\label{sec: switch}
We, next, provide theorems for the use of a switching stepsize rule in the strongly monotone and affine case that allows us to establish convergence to the exact solution $z^*$. The stepsize rule indicates the use of a constant stepsize at the start of the algorithm in order to converge to a neighborhood around the solution $z^*$ and then switch to a decreasing one with the goal of reducing the neighborhood and converging to the exact solution. 
\begin{theorem}\label{app SC-SC decreasing-steps}
    \label{SC-SC decreasing-steps}
    Suppose that the operator $F$ is $\mu$-strongly monotone, each $F_i, \, \forall i\in[n]$ is $L_i-$Lipschitz and SEG-RR is run with step size $\gamma_{2, k} = 2\gamma_{1, k}$,
    \begin{eqnarray}
      \gamma_{1, k} = 
      \begin{dcases}
           \gamma_{1, max},\text{ for } k < k^*= \ceil{ \frac{64}{\mu^2 \gamma_{1,max}^2}}\\
          \frac{4(2k+1)}{\mu(k+1)^2}, \quad \text{for } k \geq k^* \nonumber
      \end{dcases}
    \end{eqnarray}
    Then, we have that the iterates of SEG-RR satisfy
    \begin{eqnarray}
      \Expe{\|z^{K+1}_0 - z^*\|^2} &\leq& \frac{(k^*)^2 e^{-\frac{\mu n\gamma_{1,max}}{4}}}{(K+1)^2}{\|z_0- z^*\|^2} + \frac{96L_{max}^2(29+n)\sigma_*^2}{\mu^2(K+1)^2}\left(\gamma_{1,max}^2k^{*^2}+\frac{128}{\mu^2} K \right) \nonumber  
    \end{eqnarray}
    where $\gamma_{1, max} = \frac{\mu}{10L_{max}^2\sqrt{10n^2+2n+54}}$.
\end{theorem}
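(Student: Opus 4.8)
The plan is to reduce the whole argument to the one--epoch recursion already derived inside the proof of Theorem~\ref{thm SC-SC4}. Writing $r_k \coloneqq \Expe{\|z_0^k - z_*\|^2}$, inequality \eqref{eq_after_expe} together with the tower property shows that, for any epoch run with a (within--epoch constant) stepsize satisfying $\gamma_{1,k}\le\gamma_{1,max}$ and $\gamma_{2,k}=2\gamma_{1,k}$,
\begin{equation}
r_{k+1} \le \left(1 - \tfrac14 \gamma_{1,k}\, n\mu\right) r_k + b_k, \qquad b_k = \frac{24\, n L_{max}^2 (29+n)\gamma_{1,k}^3}{\mu}\sigma_*^2, \label{plan:rec}
\end{equation}
where I used $(25+n)\gamma_{1,k}^2+\gamma_{2,k}^2=(29+n)\gamma_{1,k}^2$. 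Both regimes of the switching rule keep $\gamma_{1,k}\le\gamma_{1,max}$: the constant phase is $\gamma_{1,max}$ by definition, and for $k\ge k^*$ one checks $\gamma_{1,k}=\tfrac{4(2k+1)}{\mu(k+1)^2}\le\gamma_{1,max}$ because $k^*\ge \tfrac{8}{\mu\gamma_{1,max}}$ (which follows from $k^*\ge\tfrac{64}{\mu^2\gamma_{1,max}^2}$ and $\mu\gamma_{1,max}\le 8$). Hence \eqref{plan:rec} is available throughout, and I would split the analysis at the switching index $k^*$.

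For the constant phase $k<k^*$ I would invoke part~1 of Theorem~\ref{thm SC-SC4} directly with $\gamma_1=\gamma_{1,max}$, so that \eqref{sc1} combined with $e^{-x}\ge 1-x$ from \eqref{ineq exponential} gives
\begin{equation}
r_{k^*} \le e^{-\frac{\gamma_{1,max} n\mu k^*}{4}} \|z_0 - z_*\|^2 + \frac{96 L_{max}^2 (29+n)\gamma_{1,max}^2}{\mu^2}\,\sigma_*^2. \label{plan:phase1}
\end{equation}

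For the decreasing phase $k\ge k^*$ the heart of the argument is a weighted telescoping with weights $(k+1)^2$. The stepsize is engineered precisely so that the contraction factor in \eqref{plan:rec} equals $1-\tfrac{\mu n\gamma_{1,k}}{4}=1-\tfrac{n(2k+1)}{(k+1)^2}$, whence, for every $n\ge 1$,
\begin{equation}
(k+1)^2\left(1 - \tfrac{\mu n\gamma_{1,k}}{4}\right) = (k+1)^2 - n(2k+1) \le (k+1)^2 - (2k+1) = k^2. \label{plan:telescope}
\end{equation}
Multiplying \eqref{plan:rec} by $(k+1)^2$ and applying \eqref{plan:telescope} yields $(k+1)^2 r_{k+1}\le k^2 r_k + (k+1)^2 b_k$; summing from $k^*$ to $K$ collapses the homogeneous part and leaves
\[
(K+1)^2 r_{K+1} \le (k^*)^2 r_{k^*} + \sum_{k=k^*}^{K} (k+1)^2 b_k .
\]
Substituting $\gamma_{1,k}$ into $b_k$ and using $(2k+1)^3\le 8(k+1)^3$ bounds each summand by a fixed multiple of $\tfrac{96 L_{max}^2(29+n)\sigma_*^2}{\mu^4}$, so the sum is at most a constant times $\tfrac{96 L_{max}^2(29+n)\sigma_*^2}{\mu^2}\cdot\tfrac{128}{\mu^2}K$, i.e.\ the $\mathcal{O}(K/(K+1)^2)=\mathcal{O}(1/K)$ noise term. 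Inserting \eqref{plan:phase1}, dividing by $(K+1)^2$, and using $e^{-\gamma_{1,max} n\mu k^*/4}\le e^{-\gamma_{1,max} n\mu/4}$ (valid since $k^*\ge 1$) then reproduces the two terms of the claimed bound, with $(k^*)^2\gamma_{1,max}^2$ coming from the constant--phase neighborhood and the linear--in--$K$ term from the noise sum.

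The main obstacle is not any single estimate but getting the two phases to interface cleanly: one must choose the weight $(k+1)^2$ so that the designed decreasing stepsize makes \eqref{plan:telescope} hold exactly (this is what dictates the form $\gamma_{1,k}\propto\tfrac{2k+1}{(k+1)^2}$), and one must verify that the switching index $k^*$ is large enough that (i) the decreasing stepsize has already dropped below $\gamma_{1,max}$ so that \eqref{plan:rec} applies and (ii) the contraction factor stays in $[0,1]$, i.e.\ $\tfrac{n(2k^*+1)}{(k^*+1)^2}\le 1$, which holds because $k^*=\Theta\!\big(n^2 L_{max}^4/\mu^4\big)\gg n$. Everything else — substituting $\gamma_{1,k}$ and tracking the $(29+n)$, $L_{max}^2$, $\mu$ constants — is routine bookkeeping.
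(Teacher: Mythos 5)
Your proposal is correct and follows essentially the same route as the paper's proof: the same one-epoch recursion \eqref{sc-sc-res-to}, the same split at $k^*$ with part~1 of Theorem~\ref{thm SC-SC4} handling the constant phase, and the same $(k+1)^2$-weighted telescoping for $k\ge k^*$ using $(k+1)^2 - n(2k+1)\le k^2$ and $(2k+1)^3\le 8(k+1)^3$, followed by the same $e^{-x}\ge 1-x$ weakening. If anything, you are more careful than the paper, which merely asserts $\gamma_{1,k}\le\gamma_{1,max}$ for $k\ge k^*$ where you verify it via $k^*\ge 8/(\mu\gamma_{1,max})$.
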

\begin{proof}
    Let $\gamma_{1, k}, \gamma_{2, k}$ be the step size of SEG-RR algorithm in the $k$-th epoch and fix $\gamma_{2, k} = 2\gamma_{1, k}$. Let also $k^* \in \mathbb{Z}_*$ be an epoch at which the stepsize scheme uses the decreasing stepsize $\gamma_{1,k} = \frac{4(2k+1)}{\mu (k+1)^2}$ and satisfies $\gamma_{1,k^*} \leq \gamma_{1,max}$.
    Observe that $\forall k \geq k^*: \gamma_{1,k} \leq \gamma_{1,max}$ and thus inequality \eqref{sc-sc-res-to} holds. 
    
    Substituting $\gamma_1 = \gamma_{1, k}$ and $\gamma_{2, k} = 2 \gamma_{1, k}$ into \eqref{sc-sc-res-to}, we get:
    \begin{eqnarray}
        \Expep{\|z^{k+1}_0 - z^*\|^2}&\stackrel{}{\leq}& 
    \Bigg(1 - \frac{1}{4}\gamma_{1, k} n\mu \Bigg){\|z_{0}^k - z^*\|^2}+\frac{24nL_{max}^2(29+n)}{\mu} \gamma_{1, k}^3\sigma_*^2\nonumber \\ 
        &\stackrel{}{\leq}& \Bigg(1 - \frac{1}{4}\gamma_{1, k} n\mu \Bigg){\|z_{0}^k - z^*\|^2}+\frac{24nL_{max}^2(29+n)}{\mu} \frac{4^3(2k+1)^3}{\mu^3(k+1)^6}\sigma_*^2\quad \quad \label{eq_for_dec_step_subst}
    \end{eqnarray}
        We, then, multiply both sides of \eqref{eq_for_dec_step_subst} by $(k+1)^2$ and obtain
    \begin{eqnarray}
        (k+1)^2\Expep{\|z^{k+1}_0 - z^*\|^2}\stackrel{}{\leq}k^2 \Expe{\|z_{0}^{k^*} - z^*\|^2} + \frac{24nL_{max}^2(29+n)}{\mu^4}\frac{4^3(2k+1)^3}{(k+1)^4}\sigma_*^2\nonumber
    \end{eqnarray}
    Using the inequality $\frac{(2k+1)^3}{(k+1)^4}\leq 8$, we have that:
    \begin{eqnarray}
       (k+1)^2\Expep{\|z^{k+1}_0 - z^*\|^2}\stackrel{}{\leq}k^2 {\|z_{0}^k - z^*\|^2} +\frac{48 \cdot 4^4 L_{max}^2(29+n)}{\mu^4}\sigma_*^2 \nonumber
    \end{eqnarray}
    Rearranging the terms and summing for $k=k^*, ..., K$ we are able to get the telescopic cancellation 
    \begin{eqnarray}
        \sum\limits_{k=k^*}^K \left[(k+1)^2\Expep{\|z^{k+1}_0 - z^*\|^2}- k^2{\|z_{0}^k - z^*\|^2} \right] &\stackrel{}{\leq}&\sum\limits_{k=k^*}^K \frac{48 \cdot 4^4 L_{max}^2 (29+n)\sigma_*^2}{\mu^4}\nonumber
    \end{eqnarray}
    Thus, we have that:
    \begin{eqnarray}
        \Expep{\|z^{K+1}_0 - z^*\|^2} &\stackrel{}{\leq}& \frac{(k^*)^2}{(K+1)^2}\Expep{\|z^{k^*}_0 - z^*\|^2}+\frac{48 \cdot 4^4 L_{max}^2 (29+n)}{\mu^4(K+1)^2} (K-k^*) \sigma_*^2 \quad \quad \nonumber
    \end{eqnarray}
    Taking expectation on both sides and using the tower property, we get
    \begin{eqnarray}
    \Expe{\|z^{K+1}_0 - z^*\|^2} &\stackrel{}{\leq}& \frac{(k^*)^2}{(K+1)^2}\Expe{\|z^{k^*}_0 - z^*\|^2}+\frac{48 \cdot 4^4 L_{max}^2 (29+n)}{\mu^4(K+1)^2}(K -k^*) \sigma_*^2 \quad \quad \label{eq-before-adaptivestepnew}
    \end{eqnarray}
    For $k \leq k^*$ we have that \eqref{sc-sc result-4} holds and thus combining it with \eqref{eq-before-adaptivestepnew} results to
    \begin{eqnarray}
            \Expep{\|z^{K+1}_0 - z^*\|^2} &\stackrel{}{\leq}& \frac{(k^*)^2}{(K+1)^2} \Bigg(1 - \frac{\mu n\gamma_{1,max}}{4} \Bigg)^{k^*} {\|z_0- z^*\|^2} \nonumber \\ 
            &&+\frac{96L_{max}^2(29+n)\sigma_*^2}{\mu^2(K+1)^2}\left(\gamma_{1,max}^2{k^*}^2+\frac{128}{\mu^2} (K -k^*)\right)\label{K*}
    \end{eqnarray} 
    Using the inequality $(1-x)^{x} \leq e^{-x}$, we get:
    \begin{eqnarray}
       \Expe{\|z^{K+1}_0 - z^*\|^2} &\leq& \frac{(k^*)^2 e^{-\frac{\mu n\gamma_{1,max}}{4}}}{(K+1)^2}{\|z_0- z^*\|^2} + \frac{96L_{max}^2(29+n)\sigma_*^2}{\mu^2(K+1)^2}\left(\gamma_{1,max}^2k^{*^2}+\frac{128}{\mu^2} K \right) \label{k**}
    \end{eqnarray}
    In the above, we choose $k^*$ so that it minimizes the second term in \eqref{k**} 
    and thus $k^* = \ceil{ \frac{64}{\mu^2\gamma_{1,max}^2}}$.
\end{proof}

Next, we provide convergence guarantees for a switching stepsize rule in the affine case. 

\begin{theorem}\label{app Bilinear-decreasing stepsimpl}
    \label{bilinear decreasing-steps}
Suppose that each $F_i, \, \forall i\in [n]$ is monotone, affine and $L_i-$Lipschitz. If SEG-RR is run with step size $\gamma_{2, k} = 4\gamma_{1, k}$,
    \begin{eqnarray}
      \gamma_{1, k} = 
      \begin{dcases}
           \gamma_{1, max},\text{ for } k < k^*= \ceil{ \frac{16}{{\lambda_{min}^+(Q)}^2\gamma_{1,max}^2}}\\
          \frac{2(2k+1)}{\lambda_{min}^+(Q)(k+1)^2}, \quad \text{for } k \geq k^* \nonumber
      \end{dcases}
    \end{eqnarray}
    then we have that the iterates of SEG-RR satisfy
    \begin{eqnarray}
      \Expe{\|z^{K+1}_0 - z^*\|^2} &\leq& \frac{(k^*)^2}{(K+1)^2} e^{- \frac{1}{2}\gamma_1nk^*\lambda_{min}^+(Q)} \Expe{\|z_0- z^*\|^2} \nonumber \\ 
            && + \frac{8\left(24n -23+\frac{1}{n}\right)L_{max}}{{\lambda_{min}^+(Q)}^2(K+1)^2}  \sigma_*^2 \left[\gamma_{1, max}^2k^{*^2}+\frac{32(K-k^*)}{\lambda_{min}^2(Q)} \right] \nonumber  
    \end{eqnarray}
    where $\gamma_{1, max} = \frac{\lambda_{min}^+(Q)}{2\sqrt{120} nL^2_{max}}$.
\end{theorem}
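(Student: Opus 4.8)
The plan is to mirror the proof of the strongly monotone switching-stepsize result, Theorem~\ref{app SC-SC decreasing-steps}, replacing the strongly monotone per-epoch recursion with its affine counterpart. The starting point is the one-step contraction established inside the proof of Theorem~\ref{thm-bil-1}, namely \eqref{billast} (and its expectation-taken form \eqref{bil-after-excpe}), which states that whenever $\gamma_2 = 4\gamma_1$ and $\gamma_1 \leq \gamma_{1,max}$,
\[
\Expep{\|z^{k+1}_0 - z_*\|^2} \leq \left(1 - \frac{1}{2}\gamma_1 n \lambda_{min}^+(Q)\right)\|z_{0}^k - z_*\|^2 + \frac{2L_{max}\gamma_1}{n\lambda_{min}^+(Q)}\left[\gamma_2^2 + 4n\gamma_1^2(n+25)\right]\sigma_*^2 .
\]
First I would split the run into the constant phase ($k < k^*$) and the decreasing phase ($k \geq k^*$), and verify that the decreasing stepsize $\gamma_{1,k} = \frac{2(2k+1)}{\lambda_{min}^+(Q)(k+1)^2}$ indeed satisfies $\gamma_{1,k} \leq \gamma_{1,max}$ for every $k \geq k^*$, so that the recursion above applies throughout the decreasing phase.

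For the decreasing phase I would substitute $\gamma_1 = \gamma_{1,k}$ and $\gamma_2 = 4\gamma_{1,k}$ into the recursion, exactly as in \eqref{eq_for_dec_step_subst} for the strongly monotone case. The stepsize is engineered so that the contraction factor $1 - \frac{1}{2}\gamma_{1,k} n \lambda_{min}^+(Q)$ cancels against the telescoping weight $(k+1)^2$; multiplying both sides by $(k+1)^2$ turns the leading term into $k^2\|z_0^k - z_*\|^2$, while the noise term, proportional to $\gamma_{1,k}^3 \propto \frac{(2k+1)^3}{(k+1)^6}$, becomes proportional to $\frac{(2k+1)^3}{(k+1)^4}$ after the weighting. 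Bounding $\frac{(2k+1)^3}{(k+1)^4} \leq 8$ makes each weighted noise contribution uniformly bounded in $k$, so that summing over $k = k^*, \dots, K$ produces a telescopic cancellation and leaves $\Expe{\|z_0^{K+1} - z_*\|^2}$ controlled by $\frac{(k^*)^2}{(K+1)^2}\Expe{\|z_0^{k^*} - z_*\|^2}$ plus an accumulated decreasing-phase noise term of order $\frac{K - k^*}{(K+1)^2}$.

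For the constant phase I would iterate \eqref{bil-after-excpe} over $k = 0, \dots, k^*-1$ with $\gamma_1 = \gamma_{1,max}$ to control $\Expe{\|z_0^{k^*} - z_*\|^2}$ by $(1 - \frac{1}{2}\gamma_{1,max} n \lambda_{min}^+(Q))^{k^*}\|z_0 - z_*\|^2$ plus a constant noise floor proportional to $\gamma_{1,max}^2$; applying the inequality $(1-x)^{k^*} \leq e^{-x k^*}$ from \eqref{ineq exponential} produces the exponential factor $e^{-\frac{1}{2}\gamma_1 n k^* \lambda_{min}^+(Q)}$ appearing in the statement. Substituting this into the decreasing-phase bound and collecting the two noise contributions — the constant-phase floor, scaled by $(k^*)^2/(K+1)^2$ and giving the $\gamma_{1,max}^2 k^{*2}$ term, together with the decreasing-phase sum giving the $\frac{32(K-k^*)}{\lambda_{min}^2(Q)}$ term — yields the claimed inequality, with $k^* = \left\lceil 16/((\lambda_{min}^+(Q))^2 \gamma_{1,max}^2)\right\rceil$ chosen to minimize the combined noise term just as in the strongly monotone case.

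The main obstacle I anticipate is the bookkeeping at the phase transition rather than any single hard inequality: one must confirm admissibility of the decreasing stepsize at $k = k^*$, match the $(k+1)^2$ telescoping weight to the affine contraction rate $\frac{1}{2}\lambda_{min}^+(Q)$ (instead of the $\frac{1}{4}\mu$ of the strongly monotone proof), and carefully propagate the constant-phase noise floor through the weighted telescope so that the final constants — in particular the $8\left(24n - 23 + \frac{1}{n}\right)L_{max}$ prefactor — emerge correctly. A secondary care point is that, as in the strongly monotone template, the contraction factor carries a factor $n$ that must be tracked consistently between the two phases.
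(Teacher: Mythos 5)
Your proposal is correct and follows essentially the same route as the paper's proof: the constant phase is handled by iterating the one-step affine contraction \eqref{billast}/\eqref{bil-after-excpe} with $\gamma_1=\gamma_{1,max}$ and applying $(1-x)^{k^*}\leq e^{-xk^*}$, while the decreasing phase substitutes $\gamma_{1,k}=\frac{2(2k+1)}{\lambda_{min}^+(Q)(k+1)^2}$, multiplies by the weight $(k+1)^2$ so the contraction factor collapses to $k^2$, bounds $\frac{(2k+1)^3}{(k+1)^4}\leq 8$, and telescopes over $k=k^*,\dots,K$ before choosing $k^*=\ceil{16/({\lambda_{min}^+(Q)}^2\gamma_{1,max}^2)}$ to balance the noise terms. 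Your closing remark about tracking the factor $n$ in the contraction is apt, since the paper silently weakens $1-\frac{n(2k+1)}{(k+1)^2}$ to $1-\frac{2k+1}{(k+1)^2}$ before telescoping, exactly the bookkeeping point you flag.
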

\begin{proof}
    Let $\gamma_{1,k}, \gamma_{2, k}$ be the step size of SEG-RR algorithm in the $k$-th epoch and fix $\gamma_{2, k} = 4 \gamma_{1, k}$. Let also $k^* \in \mathbb{Z}_*$ be an epoch at which the stepsize scheme uses the decreasing stepsize $\gamma_{1,k} = \frac{2(2k+1)}{\lambda_{min}^+(Q)(k+1)^2}$.
    Observe that $\forall k \geq k^*: \gamma_{1,k} \leq \gamma_{1,max}$ and thus inequality \eqref{billast} holds. \\
    Substituting $\gamma_1 = \gamma_{1, k}$ and $\gamma_2 = 4 \gamma_{1, k}$ in \eqref{billast} we get:
    \begin{eqnarray}
        \Expep{\|z^{k+1}_0 - z^*\|^2} &\stackrel{}{\leq}& \left[1 - \frac{2k+1}{(k+1)^2} \right] \|z_{0}^k - z^*\|^2\nonumber \\
        && + \frac{4 \left(24n^2 -23n+1\right)L_{max}\sigma_*^2}{\lambda_{min}^+(Q)}  \left(\frac{2(2k+1)}{\lambda_{min}^+(Q)(k+1)^2}\right)^3 \nonumber
    \end{eqnarray}
    Multiplying both sides with $(k+1)^2$ results to
    \begin{eqnarray}
        (k+1)^2\Expep{\|z^{k+1}_0 - z^*\|^2}\stackrel{}{\leq}k^2 \Expep{\|z_{0}^{k^*} - z^*\|^2} +\frac{32 \left(24n^2 -23n+1\right)L_{max}\sigma_*^2}{{\lambda_{min}^+(Q)}^4} \frac{(2k+1)^3}{(k+1)^4}\nonumber
     \end{eqnarray}
    Using the inequality $\frac{(2k+1)^3}{(k+1)^4}\leq \frac{2^3(k+1)^3}{(k+1)^4} \leq 8$ we have that:
    \begin{eqnarray}
       (k+1)^2\Expep{\|z^{k+1}_0 - z^*\|^2}\stackrel{}{\leq}k^2 {\|z_{0}^k - z^*\|^2} +\frac{256 \left(24n^2 -23n+1\right)L_{max}\sigma_*^2}{{\lambda_{min}^+(Q)}^4} \nonumber
    \end{eqnarray}
    Rearranging the terms and summing for $k=k^*, ..., K$, we are able to get the telescopic cancellation 
    \begin{eqnarray}
        \sum\limits_{k=k^*}^K \left[(k+1)^2\Expep{\|z^{k+1}_0 - z^*\|^2}- k^2{\|z_{0}^k - z^*\|^2} \right] &\stackrel{}{\leq}&\sum\limits_{k=k^*}^K \frac{256 \left(24n^2 -23n+1\right)L_{max}\sigma_*^2}{{\lambda_{min}^+(Q)}^4}\nonumber
    \end{eqnarray}
    Thus, we get that:
    \begin{eqnarray}
        \Expep{\|z^{K+1}_0 - z^*\|^2} &\stackrel{}{\leq}& \frac{(k^*)^2}{(K+1)^2}\Expep{\|z^{k^*}_0 - z^*\|^2}+\frac{256 \left(24n^2 -23n+1\right)L_{max}\sigma_*^2}{{\lambda_{min}^+(Q)}^4(K+1)^2} \nonumber
    \end{eqnarray}
    Taking expectation on both sides and using the tower property, we obtain:
    \begin{eqnarray}
    \Expe{\|z^{K+1}_0 - z^*\|^2} &\stackrel{}{\leq}& \frac{(k^*)^2}{(K+1)^2}\Expe{\|z^{k^*}_0 - z^*\|^2} +\frac{256\left(24n^2 -23n+1\right)L_{max}\sigma_*^2}{{\lambda_{min}^+(Q)}^4(K+1)^2}(K -k^*) \sigma_*^2 \quad \quad \label{eq-before-adaptivestep_bil}
    \end{eqnarray}
    For $k \leq k^*$ we have that \eqref{final result bil1} holds with $\gamma_1 = \gamma_{1, max}$
    \begin{eqnarray}
         \Expe{\|z_0^{k^*+1} - z^*\|^2 }&\leq&\left[1 - \frac{1}{2}\gamma_{1, max} n\lambda_{min}^+(Q)\right]^{k^*} {\|z_0- z^*\|^2}
   + \frac{8\left(24n -23+\frac{1}{n}\right)L_{max}}{\lambda^2_{min}(Q)} \gamma_{1,max}^2 \sigma_*^2    \nonumber
    \end{eqnarray}
    and thus combining it with \eqref{eq-before-adaptivestep_bil} we get: 
    \begin{eqnarray}
            \Expep{\|z^{K+1}_0 - z^*\|^2} &\stackrel{}{\leq}& \frac{(k^*)^2}{(K+1)^2} \left(1 - \frac{1}{2}\gamma_1n\lambda_{min}^+(Q)\right)^{k^*} \Expe{\|z_0- z^*\|^2} \nonumber \\ 
            && + \frac{8\left(24n -23+\frac{1}{n}\right)L_{max}}{\lambda^2_{min}(Q)(K+1)^2}  \sigma_*^2 \left[\gamma_{1, max}^2k^{*^2}+\frac{32(K-k^*)}{\lambda_{min}^2(Q)} \right]\nonumber \\
            &\stackrel{\eqref{ineq exponential}}{\leq}& \frac{(k^*)^2}{(K+1)^2} e^{- \frac{1}{2}\gamma_1nk^*\lambda_{min}^+(Q)} \Expe{\|z_0- z^*\|^2} \nonumber \\ 
            && + \frac{8\left(24n -23+\frac{1}{n}\right)L_{max}}{\lambda^2_{min}(Q)(K+1)^2}  \sigma_*^2 \left[\gamma_{1, max}^2k^{*^2}+\frac{32(K-k^*)}{\lambda_{min}^2(Q)} \right] \quad \quad \label{K*_bil}
    \end{eqnarray}
    Lastly, we choose $k^*$, so that it minimizes the second term in \eqref{K*_bil} and thus $k^* = \ceil{ \frac{16}{\lambda_{min}^2(Q)\gamma_{1,max}^2}}$.
\end{proof}
\newpage
\section{On Experiments}
\label{app experiments}

 In Appendix~\ref{bocasla}, we provide more details on the experiments discussed in the main paper. In Appendix \ref{app add experiments}, we run more experiments to evaluate the performance of \ref{eq:SEG-RR}. As stated in the main paper, the code for reproducing our experimental results is available at \url{https://github.com/emmanouilidisk/Stochastic-ExtraGradient-with-RR}.

\subsection{Experimental Details}
\label{bocasla}
We first describe our experimental setup. In the strongly monotone setting, we consider the following quadratic problem:
\begin{equation}
    \min_{x \in \mathbb{R}^{d}} \max_{y \in \mathbb{R}^{d}} \frac{1}{n}\sum\limits_{i=1}^n \frac{1}{2} x^\top A_i x + x^\top B_i y - \frac{1}{2} y^\top C_i y + a_i^\top x - c_i^\top y \nonumber
\end{equation}
We sample the matrices $A_i$ by first sampling an orthogonal matrix $P$ and then sampling a diagonal matrix $D_i$ with elements in the diagonal uniformly sampled from the interval $[\mu, L]$. Here, the parameters $\mu, L$ correspond to the strong monotonicity parameter and the Lipschitz parameter of the problem. We acquire the matrices $A_i$, as the product $A_i = P D_i P^{T}$. We sample the matrices $B_i, C_i$ similarly to sampling the matrices $A_i$ with the only difference that the elements of $D_i$ lie in the interval $[0, 0.1]$ and $[\mu, L]$ respectively. The vectors $a_i, c_i$ are sampled from the normal distribution $\mathcal{N}(0, I)$. In all experiments, we use $n=100, d = 100$, while we specify the values of $\mu, L$ in each experiment independently as they differ.

In the bilinear regime, we focus on the following two-player zero-sum game:
\begin{eqnarray}
    \min_{x \in \mathbb{R}^{d}} \max_{y \in \mathbb{R}^{d}} \frac{1}{n}\sum\limits_{i=1}^n x^\top B_i y + a_i^\top x - c_i^\top y \nonumber
\end{eqnarray}
We let the matrices $B_i$ be $B_i = P D_i P^T$, where $P$ is an orthogonal matrix and $D_i$ a diagonal matrix with elements in the diagonal selected uniformly at random from $[\lambda_{min}^+(Q), L_{max}]$. We specify that the parameters $\lambda_{min}^+(Q), L_{max}$ correspond to the parameters $\lambda_{min}^+(Q), L_{max}$ of Theorem \ref{thm-bil-1}. Regarding the vectors $a_i, c_i$, they are sampled from the normal distribution $\mathcal{N}(0, I)$. In all experiments, we use $n = 100$ and let $d=1$ in the two-dimensional experiments; otherwise, $d = 100$. We specify individually for each experiment the parameters $\lambda_{min}^+(Q), L_{max}$ that have been used.
\subsection{Additional Experiments}
\label{app add experiments}
In this part, we provide additional experiments to the ones presented in the main paper. 

\paragraph{SC - SC Problems.}
We initially focus on Strongly Convex - Strongly Concave (SC - SC) minimax problems and compare the different without-replacement sampling variants of SEG, namely SEG-RR, SEG-SO and IEG, with \ref{eq:S_SEG} (denoted as \textit{SEG} in the plots). 
\begin{figure}[H]
\centering
\includegraphics[width=160mm]{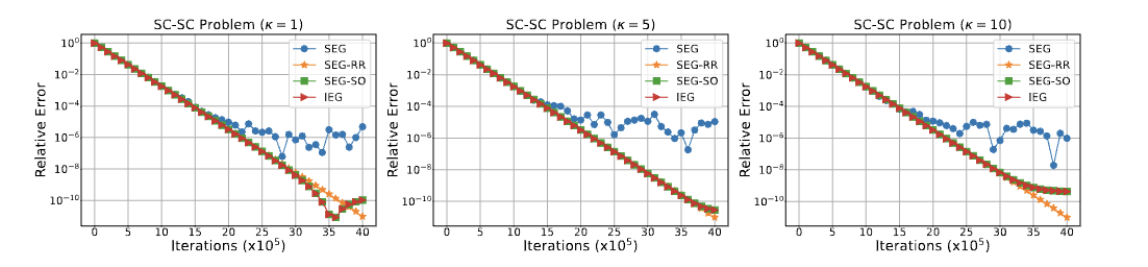}
\caption{SC-SC problem. SEG-RR, SEG-SO, SEG-IG and SEG run with step size as in Theorem \ref{thm SC-SC4} for problem with $\mu=1, n=100$ and different condition numbers.
SEG-RR achieves smaller relative error in comparison to the other without-replacement sampling methods.}
\end{figure}

Since SEG-RR seems to achieve at least as small (if not smaller) error than the other without-replacement variants it makes sense to use SEG-RR in practice. In this way, we will focus for the rest of this section on experiments comparing SEG-RR with SEG. 
We start by exploiting the behaviour of with and without-replacement sampling for the step size suggested by the analysis of SEG in \citet{gorbunov2022stochastic}. 
We observe that even for the theoretical step size SEG-RR performs better than SEG in terms of relative error. 

\begin{figure}[H]
\centering
\includegraphics[width=160mm]{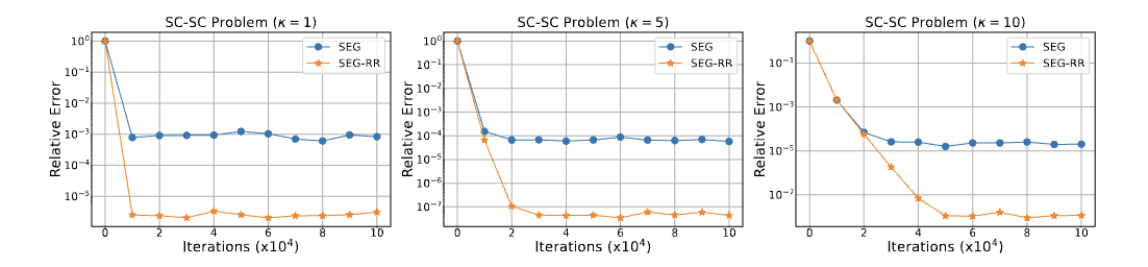}
\caption{SC-SC problem. SEG-RR vs SEG run with step size as in \citet{gorbunov2022stochastic} for problem with $\mu=1, n=100$ and different condition numbers.}
\end{figure}

We, next, compare SEG-RR with SEG. We conduct experiments for problems with different condition numbers $\kappa = \{ 1,5,10,100\}$ and different step size $\gamma_1 = \{\frac{1}{10L_{max}}, \frac{1}{100L_{max}}, \frac{1}{1000L_{max}}\}$. In this vein, we fix $\mu = 1$ and let the Lipschitz parameter vary as the condition number $\kappa = \frac{L}{\mu}$ changes. 

\begin{figure}[H]
\centering
\includegraphics[width=165mm]{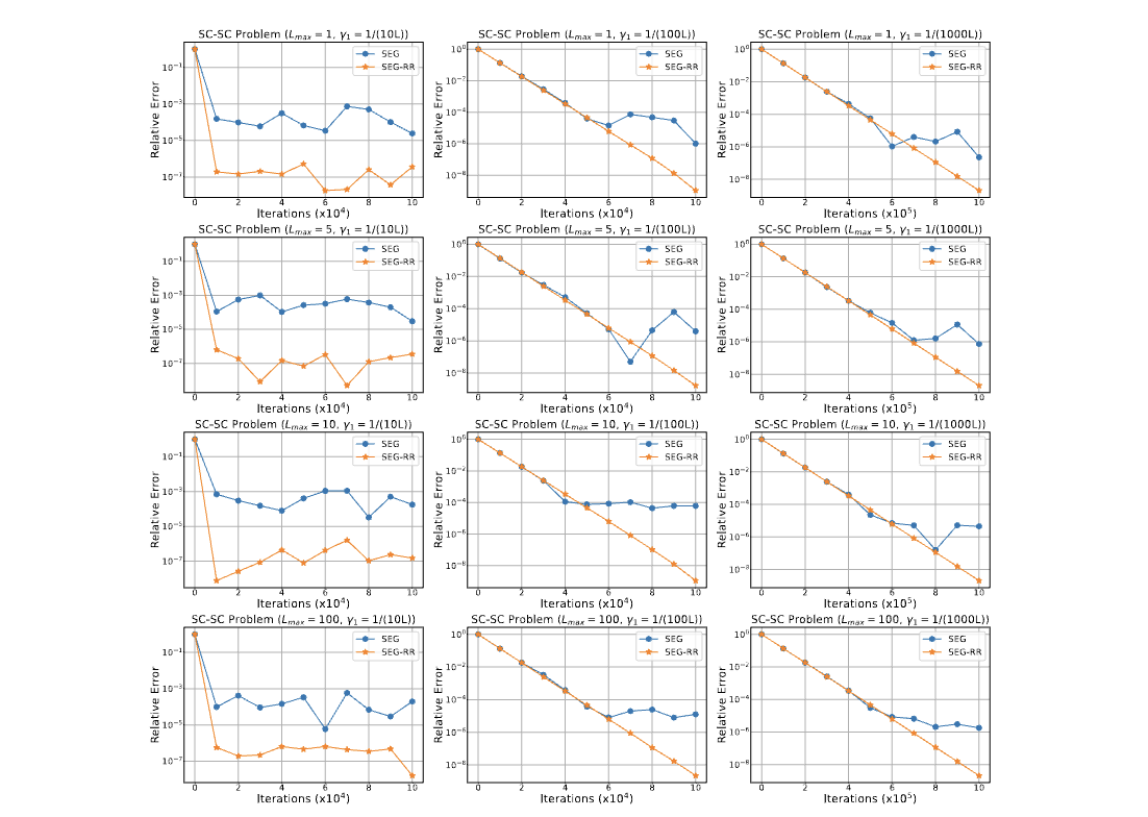}
\caption{SC - SC Problems: Comparison of SEG-RR and SEG with different step size. Each row corresponds to a problem with a different condition number $\kappa = \{ 1,5,10,100\}$, while each column corresponds to a specific stepsize $\gamma_1 = \{\frac{1}{10L}, \frac{1}{100L}, \frac{1}{1000L}\}$. In all problems $\mu = 1, n=100$.}
\end{figure}
\newpage

\paragraph{Bilinear Games.}
We, first, provide experiments comparing SEG-RR, SEG-SO, IEG with SEG. We use as step size in all algorithms the step size suggested in Theorem \ref{thm-bil-1}. We fix $\lambda_{min}^+(Q) = 1$ and let the Lipschitz constant of the problem vary. 
\begin{figure}[H]
\centering
\includegraphics[width=170mm]{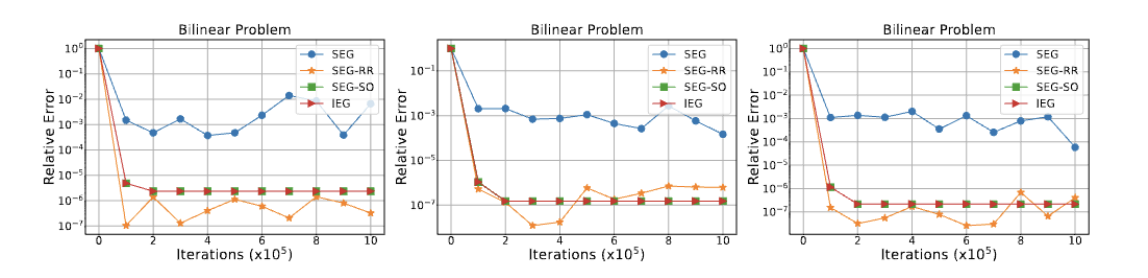}
\caption{Bilinear games. The without-replacement variants of SEG achieve better performance in terms of relative error in comparison to \ref{eq:S_SEG}. The problem parameters $\lambda_{min}^+(Q)=1, n=100$ for different $L_{max}=\{1, 5, 10\}$ in the plots from left to right accordingly and with
step size the ones in Theorem \ref{thm-bil-1}.}
\end{figure}

It is obvious that the without-replacement sampling variants of SEG converge with with smaller relative error than the uniform with-replacement variant for the same number of iterations/epochs.

We, next, provide experiments for SEG-RR and SEG for problems with different Lipschitz parameters $L_{max} = \{ 1, 5, 10\}$ using the theoretical step size $\gamma_1 = \frac{0.1}{(t+19)^{r_\eta}}, \gamma_2 = \frac{1}{(t+19)^{r_{\gamma}}}$ where $r_{\gamma} = 0, r_{\eta} =0.7$ suggested in the analysis of SEG for bilinear games in \citet{hsieh2020explore}. 

\begin{figure}[H]
\centering
\includegraphics[width=160mm]{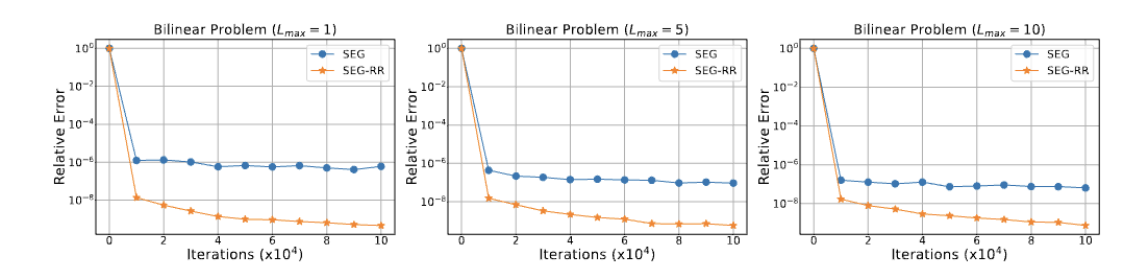}
\caption{Bilinear games with $\lambda_{min}^+(Q)=1, n=100$ for different $L_{max}$ and with
step sizes as in Theorem \ref{thm-bil-1}.}
\end{figure}
It is obvious that for the stepsizes suggested by theory SEG-RR achieves a smaller relative error for the same number of epochs/iterations in comparison to SEG. 

We, lastly, conduct experiments for step sizes larger than the theoretical ones and for a number of different problem instances to capture the performance of SEG-RR and SEG in a broad range of step sizes and problem setups. We run experiments for problems with different $L = \{1, 5, 10\}$ and for step sizes $\gamma_2 = 4 \gamma_1$ with $\gamma_1 = \{\frac{1}{10L_{max}}, \frac{1}{100L_{max}}\}$.
\newpage
\begin{figure}[H]
\centering
\includegraphics[width=180mm]{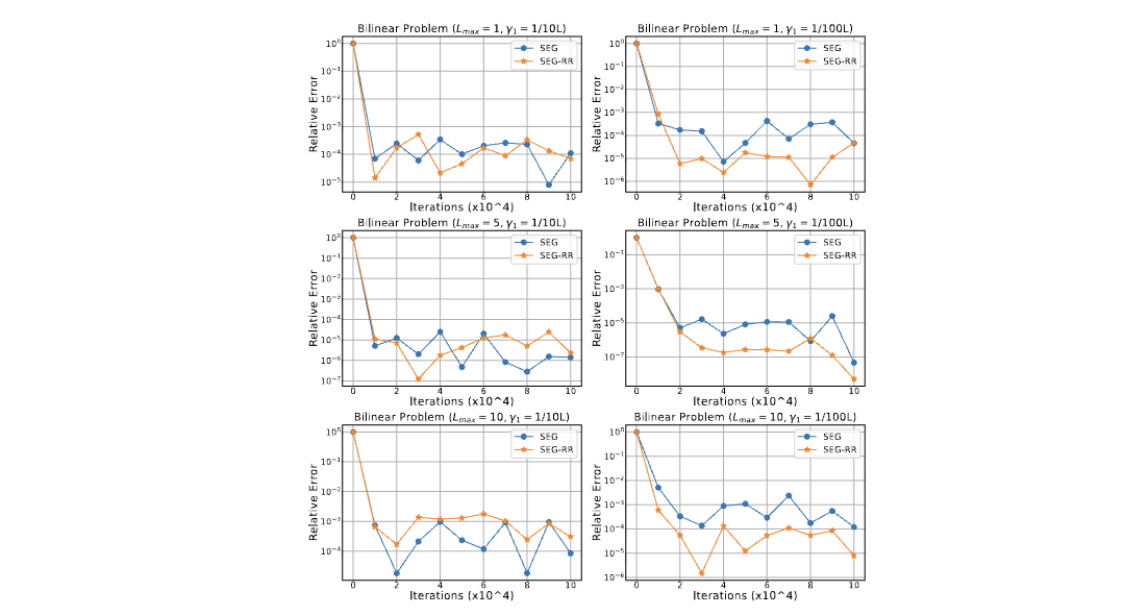}
\caption{Bilinear games for different $L_{max} = \{1, 5, 10\}$ and with
step sizes $\gamma_1  = \{\frac{1}{10L}, \frac{1}{100L}\}$. In all problems $\lambda_{min}^+(Q) =1, n=100$.}
\end{figure}

In the above plots, it is obvious that in most cases SEG-RR achieves at least as good (if not smaller) relative error than \ref{eq:S_SEG}, which advocates for the use of random reshuffling in practice. 

\end{document}